\numberwithin{equation}{section}
\newtheorem{theorem}[equation]{Theorem}
\newtheorem*{theorem*}{Theorem} \newtheorem{lemma}[equation]{Lemma}
\newtheorem*{conjecture*}{Mamma Conjecture}
\newtheorem*{conjecture1*}{Mamma Conjecture (revisited)}
\newtheorem{proposition}[equation]{Proposition}
\newtheorem{corollary}[equation]{Corollary}
\newtheorem*{corollary*}{Corollary}
\theoremstyle{remark}
\newtheorem{definition}[equation]{Definition}
\newtheorem{example}[equation]{Example}
\newtheorem{notation}[equation]{Notation}
\theoremstyle{remark}
\newtheorem{remark}[equation]{Remark}
\newcommand{\cA}{{\mathcal A}}
\newcommand{\cB}{{\mathcal B}}
\newcommand{\cC}{{\mathcal C}}
\newcommand{\cD}{{\mathcal D}}
\newcommand{\cF}{{\mathcal F}}
\newcommand{\cI}{{\mathcal I}}
\newcommand{\cL}{{\mathcal L}}
\newcommand{\cO}{{\mathcal O}}
\newcommand{\cP}{{\mathcal P}}
\newcommand{\cT}{{\mathcal T}}
\newcommand{\cW}{{\mathcal W}}
\newcommand{\cX}{{\mathcal X}}
\newcommand{\cY}{{\mathcal Y}}
\newcommand{\cZ}{{\mathcal Z}}
\newcommand{\bbA}{\mathbb{A}}
\newcommand{\bbC}{\mathbb{C}}
\newcommand{\bbF}{\mathbb{F}}
\newcommand{\bbG}{\mathbb{G}}
\newcommand{\bbP}{\mathbb{P}}
\newcommand{\bbQ}{\mathbb{Q}}
\newcommand{\bbZ}{\mathbb{Z}}
\DeclareMathOperator{\id}{id}
\DeclareMathOperator{\NChow}{NChow} 
\DeclareMathOperator{\NNum}{NNum} 
\newcommand{\dgcat}{\mathrm{dgcat}} 
\newcommand{\perf}{\mathrm{perf}}
\newcommand{\dg}{\mathrm{dg}}
\newcommand{\Hom}{\mathrm{Hom}}
\newcommand{\dgHo}{\mathrm{H}^0}
\newcommand{\dgHi}{\mathrm{H}^i}
\newcommand{\Hmo}{\mathrm{Hmo}}
\newcommand{\op}{\mathrm{op}}
\newcommand{\too}{\longrightarrow}
\let\oldmarginpar\marginpar
\def\marginpar#1{\oldmarginpar{\tiny #1}}
\begin{document}

\title[Noncommutative Weil conjecture]{Noncommutative Weil conjecture}
\author{Gon{\c c}alo~Tabuada}
\address{Gon{\c c}alo Tabuada, Department of Mathematics, MIT, Cambridge, MA 02139, USA}
\email{tabuada@math.mit.edu}
\urladdr{http://math.mit.edu/~tabuada}
\thanks{The author was supported by a NSF CAREER Award}

%
\date{\today}
\abstract{In this article, following an insight of Kontsevich, we extend the famous Weil conjecture (as well as the strong form of the Tate conjecture) from the realm of algebraic geometry to the broad noncommutative setting of dg categories. As a first application, we prove the noncommutative Weil conjecture (and the noncommutative strong form of the Tate conjecture) in the following cases: twisted schemes, Calabi-Yau dg categories associated to hypersurfaces, noncommutative gluings of schemes, root stacks, (twisted) global orbifolds, connective dg algebras, and finite-dimensional dg algebras. As a second application, we provide an alternative noncommutative proof of Weil's original conjecture (which avoids the involved tools used by Deligne) in the cases of intersections of two quadrics and linear sections of determinantal varieties. Finally, we extend also the classical theory of $L$-functions (as well as the corresponding conjectures of Tate and Beilinson) from the realm of algebraic geometry to the  broad noncommutative setting of dg categories. Among other applications, this leads to an alternative noncommutative proof of a celebrated convergence result of Serre.}}
\maketitle


\section{Statement of results: Zeta functions}\label{sec:intro}
Let $k=\bbF_q$ be a finite field of characteristic $p$, $W(k)$ the ring of $p$-typical Witt vectors of $k$, and $K:=W(k)_{1/p}$ the fraction field of $W(k)$. Given a smooth proper $k$-scheme $X$ of dimension $d$, recall that its zeta function is defined as the formal power series $Z(X;t) := \mathrm{exp}(\sum_{n \geq 1}\# X(\bbF_{q^n})\frac{t^n}{n}) \in \bbQ[\![t]\!]$, where $\mathrm{exp}(t):=\sum_{n \geq 0} \frac{t^n}{n!}$. In the same vein, given an integer $0 \leq w \leq 2d$, consider the formal power series $Z_w(X;t):=\mathrm{det}(\id - t\mathrm{Fr}^w |H^w_{\mathrm{crys}}(X))^{-1} \in K[\![t]\!]$, where $H^\ast_{\mathrm{crys}}(X)$ stands for the crystalline cohomology $H^\ast_{\mathrm{crys}}(X/W(k))\otimes_{W(k)}K$ of $X$, $\mathrm{Fr}$ for the Frobenius endomorphism of $X$, and $\mathrm{Fr}^w$ for the induced automorphism of $H^w_{\mathrm{crys}}(X)$. Thanks to the Lefschetz trace formula established by Grothendieck and Berthelot (see \cite[Chapitre VII \S3.2]{Berthelot}), we have the following weight decomposition:
\begin{equation}\label{eq:factorization}
Z(X;t)=\frac{Z_0(X;t) Z_2(X;t)\cdots Z_{2d}(X;t)}{Z_1(X;t)Z_3(X;t) \cdots Z_{2d-1}(X;t)}\,.
\end{equation}

In the late forties, Weil \cite{Weil} conjectured the following\footnote{The conjecture $\mathrm{W}(X)$ is a modern formulation of Weil's original conjecture; in the late forties crystalline cohomology was not yet developed.}:

\vspace{0.1cm}

{\it Conjecture $\mathrm{W}(X)$: The eigenvalues of the automorphism $\mathrm{Fr}^w$, with $0 \leq w \leq 2d$, are algebraic numbers and all their complex conjugates have absolute value $q^{\frac{w}{2}}$.}

\vspace{0.1cm}

In the particular case of curves, this famous conjecture follows from Weil's pioneering work \cite{Weil1}. Later, in the seventies, it was proved in full generality by~Deligne\footnote{Deligne worked with \'etale cohomology instead. However, as explained by Katz-Messing in \cite{KM}, Deligne's results hold similarly in crystalline cohomology. More recently, Kedlaya \cite{Kedlaya} gave an alternative proof of the Weil conjecture which uses solely $p$-adic techniques.} \cite{Deligne}. In contrast with Weil's proof, which uses solely the classical intersection theory of divisors on surfaces, Deligne's proof makes use of several involved tools such as the theory of monodromy of Lefschetz pencils. The Weil conjecture has numerous applications. For example, when combined with the weight decomposition \eqref{eq:factorization}, it implies that the polynomials $\mathrm{det}(\id - t \mathrm{Fr}^w |H^w_{\mathrm{crys}}(X))$ have integer coefficients.

Recall that the Hasse-Weil zeta function of $X$ is defined as the (convergent) infinite product $\zeta(X;s):= \prod_{x \in X^{(d)}} (1- (q^{\mathrm{deg}(x)})^{-s})^{-1}$, with $\mathrm{Re}(s)>d$, where $X^{(d)}$ stands for the set of closed points of $X$ and $\mathrm{deg}(x)$ for the degree of the finite field extension $\kappa(x)/\bbF_q$. In the same vein, given an integer $0 \leq w \leq 2d$, consider the function $\zeta_w(X;s):= \mathrm{det}(\id - q^{-s}\mathrm{Fr}^w |H^w_{\mathrm{crys}}(X))^{-1}$. It follows from the Weil conjecture that $\zeta(X;s)=Z(X;q^{-s})$, with $\mathrm{Re}(s)>d$, and that $\zeta_w(X;s)=Z_w(X;q^{-s})$, with $\mathrm{Re}(s)>\frac{w}{2}$. Thanks to \eqref{eq:factorization}, we hence obtain the weight decomposition:
\begin{eqnarray}\label{eq:factorization1}
\zeta(X;s)= \frac{\zeta_0(X;s)\zeta_2(X;s) \cdots \zeta_{2d}(X;s)}{\zeta_1(X;s) \zeta_3(X;s) \cdots \zeta_{2d-1}(X;s)} && \mathrm{Re}(s)>d\,.
\end{eqnarray}
Note that \eqref{eq:factorization1} implies automatically that the Hasse-Weil zeta function of $X$ admits a (unique) meromorphic continuation to the entire complex plane. 
\begin{remark}[Periodicity]\label{rk:periodicity}
Note that the Hasse-Weil zeta function of $X$ is periodic in the sense that $\zeta(X;s)=\zeta(X;s + \frac{2\pi i}{\mathrm{log}(q)})$. Similarly, $\zeta_w(X;s)=\zeta_w(X;s + \frac{2\pi i}{\mathrm{log}(q)})$.
\end{remark}
\begin{remark}[Riemann hypothesis]\label{rk:RH}
The above conjecture $\mathrm{W}(X)$ is usually  called the ``analogue of the Riemann hypothesis'' because it implies that if $z\in \bbC$ is a pole of $\zeta_w(X;s)$, then $\mathrm{Re}(z)=\frac{w}{2}$. Consequently, if $z \in \bbC$ is a pole, resp. a zero, of $\zeta(X;s)$, then $\mathrm{Re}(z)\in \{0, 1, \ldots, d\}$, resp. $\mathrm{Re}(z)\in \{ \frac{1}{2}, \frac{2}{3}, \ldots, \frac{2d-1}{2}\}$. 
\end{remark}
Let $\cA$ be a smooth proper $k$-linear dg category in the sense of Kontsevich; see \S\ref{sub:dg}. Examples include the (unique) dg enhancements $\perf_\dg(X)$ of the categories of perfect complexes $\perf(X)$ of smooth proper $k$-schemes $X$ (or, more generally, of smooth proper algebraic $k$-stacks $\cX$); consult \cite{ICM-Keller,LO}. As explained in \S\ref{sub:cyclotomic} below, the topological periodic cyclic homology group $TP_0(\cA)_{1/p}$ of $\cA$ (this is a finite-dimensional $K$-vector space), resp. the topological periodic cyclic homology group $TP_1(\cA)_{1/p}$ of $\cA$, comes equipped with an automorphism $\mathrm{F}_0$, resp. $\mathrm{F}_1$, called the ``cyclotomic Frobenius''. Following Kontsevich \cite{email}, we hence define the {\em even/odd zeta function of $\cA$} as the following formal power series:
\begin{eqnarray*}
Z_{\mathrm{even}}(\cA;t)& := & \mathrm{det}(\id - t\mathrm{F}_0|TP_0(\cA)_{1/p})^{-1} \in K[\![t]\!] \\
Z_{\mathrm{odd}}(\cA;t)& := & \mathrm{det}(\id - t\mathrm{F}_1|TP_1(\cA)_{1/p})^{-1} \in K[\![t]\!] \,.
\end{eqnarray*}
Weil's conjecture admits the following noncommutative counterpart:

\vspace{0.1cm}

{\it Conjecture $\mathrm{W}_{\mathrm{nc}}(\cA)$: The eigenvalues of the automorphism $\mathrm{F}_0$, resp. $\mathrm{F}_1$, are algebraic numbers and all their complex conjugates have absolute~value~$1$,~resp.~$q^{\frac{1}{2}}$.}

\vspace{0.1cm}

The noncommutative Weil conjecture was originally invisioned by Kontsevich in his seminal talks \cite{Lefschetz,IAS}. The next result relates this conjecture with Weil's original conjecture:

\begin{theorem}\label{thm:main}
Given a smooth proper $k$-scheme $X$, we have the equivalence of conjectures $\mathrm{W}_{\mathrm{nc}}(\perf_\dg(X)) \Leftrightarrow \mathrm{W}(X)$.
\end{theorem}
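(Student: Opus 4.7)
The plan is to reduce the equivalence to a Frobenius-equivariant comparison between topological periodic cyclic homology and crystalline cohomology. Concretely, for a smooth proper $k$-scheme $X$ of dimension $d$ one has a decomposition
\begin{eqnarray*}
TP_0(\perf_\dg(X))_{1/p} & \cong & \bigoplus_{i=0}^{d} H^{2i}_{\mathrm{crys}}(X)\,, \\
TP_1(\perf_\dg(X))_{1/p} & \cong & \bigoplus_{i=0}^{d-1} H^{2i+1}_{\mathrm{crys}}(X)\,,
\end{eqnarray*}
under which the cyclotomic Frobenius $\mathrm{F}_0$ (respectively $\mathrm{F}_1$) acts on the summand of crystalline weight $w = 2i$ (respectively $w = 2i+1$) as $q^{-i}\mathrm{Fr}^w$. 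Such a Frobenius-equivariant comparison, with the precise Tate twist $q^{-i}$, is by now standard in the wake of the Antieau-Mathew-Morrow-Nikolaus, Petrov-Vologodsky and Kaledin results on the Hochschild-Kostant-Rosenberg style filtration on $TP$; I would invoke it as input, assuming it has been set up in \S\ref{sub:cyclotomic} of the paper.

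Granted the comparison, the implication $\mathrm{W}(X) \Rightarrow \mathrm{W}_{\mathrm{nc}}(\perf_\dg(X))$ is immediate: the eigenvalues of $\mathrm{F}_0$ are precisely the numbers $\lambda q^{-i}$ with $\lambda$ an eigenvalue of $\mathrm{Fr}^{2i}$, so by $\mathrm{W}(X)$ the complex conjugates of $\lambda$ have absolute value $q^{i}$, hence those of $\lambda q^{-i}$ equal $1$; similarly, eigenvalues of $\mathrm{F}_1$ are $\lambda q^{-i}$ with $\lambda$ eigenvalue of $\mathrm{Fr}^{2i+1}$, so their complex conjugates have absolute value $q^{i+1/2}\cdot q^{-i} = q^{1/2}$. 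Algebraicity is preserved because multiplication by $q^{-i} \in \bbQ$ is algebraic. The converse $\mathrm{W}_{\mathrm{nc}}(\perf_\dg(X)) \Rightarrow \mathrm{W}(X)$ uses that the above isomorphism respects the weight decomposition: reading off the summand $H^{w}_{\mathrm{crys}}(X)$ inside $TP_0(\perf_\dg(X))_{1/p}$ or $TP_1(\perf_\dg(X))_{1/p}$ according to the parity of $w$, the eigenvalues of $\mathrm{Fr}^w$ are obtained from those of $\mathrm{F}_0$ or $\mathrm{F}_1$ on that summand by multiplication with $q^{\lfloor w/2 \rfloor}$, which multiplies absolute values by exactly the factor needed to turn $1$ (even case) or $q^{1/2}$ (odd case) into $q^{w/2}$, and preserves algebraicity.

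The main obstacle is not the eigenvalue bookkeeping itself, which is essentially formal, but the Frobenius-equivariant identification of $TP(\perf_\dg(X))_{1/p}$ with the direct sum of crystalline cohomology groups together with the precise Tate twist $q^{-i}$ relating the cyclotomic Frobenius to the crystalline Frobenius. This twist is the mechanism that \emph{folds} the entire weight filtration of $X$ onto the single weight $0$ (even) and weight $1$ (odd) in the noncommutative setting, and it is the nontrivial bridge without which the two conjectures would not even be comparable. Everything else in the proof is essentially a comparison of absolute values across this bridge.
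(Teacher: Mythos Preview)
Your proposal is correct and follows essentially the same approach as the paper's own proof: invoke the Frobenius-equivariant comparison $TP_0(\perf_\dg(X))_{1/p}\simeq\bigoplus_{w\,\mathrm{even}}H^w_{\mathrm{crys}}(X)$ and $TP_1(\perf_\dg(X))_{1/p}\simeq\bigoplus_{w\,\mathrm{odd}}H^w_{\mathrm{crys}}(X)$ under which the cyclotomic Frobenius becomes $\bigoplus q^{-\lfloor w/2\rfloor}\mathrm{Fr}^w$, and then do the eigenvalue bookkeeping. The only difference is in attribution: the paper cites this comparison to Scholze (via \cite{Elment}, \cite{finite-CD}) for the isomorphism of $K$-vector spaces and to Hesselholt \cite{Lars} for the identification of the Frobenius with its Tate twist, rather than the sources you name.
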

Intuitively speaking, Theorem \ref{thm:main} shows that the Weil conjecture belongs not only to the realm of algebraic geometry but also to the broad noncommutative setting of dg categories. 

In contrast with the commutative world, the cyclotomic Frobenius is not induced from an endomorphism\footnote{Note that in the particular case where $\cA$ is a $k$-algebra $A$, the Frobenius map $a \mapsto a^q$ is a $k$-algebra endomorphism if and only if $A$ is commutative.} of $\cA$. Consequently, in contrast with the commutative world, it is not known if the polynomials $\mathrm{det}(\id - t\mathrm{F}_0|TP_0(\cA)_{1/p})$ and $\mathrm{det}(\id - t\mathrm{F}_1|TP_1(\cA)_{1/p})$ have integer coefficients (or rational coefficients). Nevertheless, after choosing an embedding $\iota\colon K\hookrightarrow \bbC$, we can define the {\em even/odd Hasse-Weil zeta function of $\cA$} as follows:
\begin{eqnarray*}
\zeta_{\mathrm{even}}(\cA;s)& := & \mathrm{det}(\id - q^{-s} (\mathrm{F}_0 \otimes_{K, \iota}\bbC)|TP_0(\cA)_{1/p}\otimes_{K, \iota} \bbC)^{-1} \\
\zeta_{\mathrm{odd}}(\cA;s)& := & \mathrm{det}(\id - q^{-s} (\mathrm{F}_1 \otimes_{K, \iota}\bbC)|TP_1(\cA)_{1/p}\otimes_{K, \iota} \bbC)^{-1}\,.
\end{eqnarray*}
\begin{remark}[Periodicity]
Similarly to Remark \ref{rk:periodicity}, note that the even/odd Hasse-Weil zeta function of $\cA$ is periodic of period $\frac{2\pi i}{\mathrm{log}(q)}$.
\end{remark}
\begin{remark}[Noncommutative Riemann hypothesis]\label{rk:key}
Similarly to Remark \ref{rk:RH}, the conjecture $\mathrm{W}_{\mathrm{nc}}(\cA)$ may be called the ``analogue of the noncommutative Riemann hypothesis'' because it implies that if $z \in \bbC$ is a pole of $\zeta_{\mathrm{even}}(\cA; s)$, resp. $\zeta_{\mathrm{odd}}(\cA; s)$, then $\mathrm{Re}(z)=0$, resp. $\mathrm{Re}(z)=\frac{1}{2}$ (independently of the embedding $\iota\colon K \hookrightarrow \bbC$).
\end{remark}
The next result follows from (the proof of) Theorem \ref{thm:main}:
\begin{corollary}\label{cor:main}
Given a smooth proper $k$-scheme $X$, we have the following factorization $\zeta_{\mathrm{even}}(\perf_\dg(X);s)  = \prod_{w \,\mathrm{even}}\zeta_w(X; s+\frac{w}{2})$ as well as the following factorization $\zeta_{\mathrm{odd}}(\perf_\dg(X); s) = \prod_{w \,\mathrm{odd}}\zeta_w(X; s+\frac{w-1}{2})$.
\end{corollary}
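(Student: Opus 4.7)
The plan is to deduce the factorizations directly from the structural input that goes into the proof of Theorem \ref{thm:main}. Namely, I would first recall that the proof establishes a canonical, Frobenius-equivariant decomposition
\begin{equation*}
TP_n(\perf_\dg(X))_{1/p} \;\simeq\; \bigoplus_{w \equiv n \,(\mathrm{mod}\,2)} H^w_{\mathrm{crys}}(X),
\end{equation*}
for $n \in \{0,1\}$, where the direct sum ranges over $0 \le w \le 2d$. This kind of Hodge-type decomposition of $TP$ in terms of crystalline cohomology is precisely what allows one to compare the cyclotomic Frobenius on the topological periodic side with the crystalline Frobenius $\mathrm{Fr}^w$ on each weight piece.

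The second ingredient, which is forced by the very statement of $\mathrm{W}_\mathrm{nc}(\cA)$ and the equivalence proved in Theorem \ref{thm:main}, is that under the above decomposition the cyclotomic Frobenius is the appropriately normalized (``Tate-twisted'') crystalline Frobenius: on the summand $H^w_{\mathrm{crys}}(X)$ with $w$ even, $\mathrm{F}_0$ acts as $q^{-w/2}\mathrm{Fr}^w$, while on the summand with $w$ odd, $\mathrm{F}_1$ acts as $q^{-(w-1)/2}\mathrm{Fr}^w$. Indeed, these are precisely the normalizations that turn the statement ``eigenvalues of $\mathrm{Fr}^w$ have complex absolute value $q^{w/2}$'' (conjecture $\mathrm{W}(X)$) into ``eigenvalues of $\mathrm{F}_0$, resp.\ $\mathrm{F}_1$, have complex absolute value $1$, resp.\ $q^{1/2}$'' (conjecture $\mathrm{W}_\mathrm{nc}(\perf_\dg(X))$), which is the content of Theorem \ref{thm:main}.

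Once these two ingredients are in hand, the corollary is a direct computation. After applying $-\otimes_{K,\iota}\bbC$ and using that the determinant of $\id - q^{-s}(\cdot)$ over a direct sum splits as a product over summands, one computes
\begin{equation*}
\zeta_{\mathrm{even}}(\perf_\dg(X);s) = \prod_{w \,\mathrm{even}} \det\!\bigl(\id - q^{-s}\, q^{-w/2}\mathrm{Fr}^w \,\big|\, H^w_{\mathrm{crys}}(X)\bigr)^{-1} = \prod_{w\,\mathrm{even}} \zeta_w(X; s+\tfrac{w}{2}),
\end{equation*}
and the analogous computation with $q^{-(w-1)/2}$ in the odd case yields the second factorization. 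The main (and only real) point to verify is the normalization of the cyclotomic Frobenius on each weight piece; this is however already encoded in the proof of Theorem \ref{thm:main}, so the corollary reduces to a book-keeping exercise on determinants.
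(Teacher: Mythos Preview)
Your approach is essentially the same as the paper's: use the Scholze decomposition of $TP_n(\perf_\dg(X))_{1/p}$ into crystalline cohomology pieces together with the identification of $\mathrm{F}_0$ and $\mathrm{F}_1$ with $\bigoplus_{w\,\mathrm{even}} q^{-w/2}\mathrm{Fr}^w$ and $\bigoplus_{w\,\mathrm{odd}} q^{-(w-1)/2}\mathrm{Fr}^w$, and then split the determinant over the direct sum.

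There is one point the paper makes explicit that you gloss over. By definition, $\zeta_{\mathrm{even}}(\perf_\dg(X);s)$ is computed after applying $-\otimes_{K,\iota}\bbC$, so your middle expression is really $\prod_{w\,\mathrm{even}}\det(\id - q^{-(s+w/2)}(\mathrm{Fr}^w\otimes_{K,\iota}\bbC)\mid H^w_{\mathrm{crys}}(X)\otimes_{K,\iota}\bbC)^{-1}$, which a priori depends on $\iota$. To identify this with $\zeta_w(X;s+\tfrac{w}{2})$ (which is defined without reference to $\iota$), one needs to know that the polynomials $\det(\id - t\,\mathrm{Fr}^w\mid H^w_{\mathrm{crys}}(X))$ have integer coefficients, so that the embedding $\iota$ plays no role. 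The paper invokes this explicitly; you should too.
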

Roughly speaking, Corollary \ref{cor:main} shows that the even/odd Hasse-Weil zeta function of $\perf_\dg(X)$ may be understood as the ``weight normalization'' of the product of the Hasse-Weil zeta functions $\zeta_w(X;s)$. This leads~to~the~following~result:
\begin{corollary}\label{cor:order}
Given a smooth proper $k$-scheme $X$ and a complex number $z \in \bbC$, we have the equality $\mathrm{ord}_{s=z}\zeta_{\mathrm{even}}(\perf_\dg(X); s)  = \sum_{w\,\mathrm{even}} \mathrm{ord}_{s=z+\frac{w}{2}} \zeta_w(X;s)$ as well as the equality $\mathrm{ord}_{s=z}\zeta_{\mathrm{odd}}(\perf_\dg(X); s) = \sum_{w\,\mathrm{odd}} \mathrm{ord}_{s=z+\frac{w-1}{2}} \zeta_w(X;s)$, where $\mathrm{ord}_{s=z}f(s)$ stands for the order of a meromorphic function $f(s)$ at $s=z$.
\end{corollary}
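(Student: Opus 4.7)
The plan is to derive Corollary \ref{cor:order} as an essentially formal consequence of Corollary \ref{cor:main}, using only the standard fact that the order of vanishing/pole at a point is additive under (finite) products of meromorphic functions, and that it transforms in the obvious way under translation of the variable.

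First, I would invoke Corollary \ref{cor:main} to write
\[
\zeta_{\mathrm{even}}(\perf_\dg(X);s) \;=\; \prod_{w\,\mathrm{even}} \zeta_w\!\left(X;\, s+\tfrac{w}{2}\right), \qquad
\zeta_{\mathrm{odd}}(\perf_\dg(X);s) \;=\; \prod_{w\,\mathrm{odd}} \zeta_w\!\left(X;\, s+\tfrac{w-1}{2}\right).
\]
Each factor is meromorphic on $\bbC$ since $\zeta_w(X;s) = \mathrm{det}(\id - q^{-s}\mathrm{Fr}^w\,|\,H^w_{\mathrm{crys}}(X))^{-1}$ is the reciprocal of an entire function (a finite product involving exponentials of $s$). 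In particular, only finitely many factors contribute a nonzero order at any given $z\in\bbC$, so all sums below are well-defined.

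Next I would apply the two elementary properties of the order functional: (i) for meromorphic functions $f_1,\ldots,f_n$, one has $\mathrm{ord}_{s=z}\prod_i f_i(s) = \sum_i \mathrm{ord}_{s=z} f_i(s)$; and (ii) for a meromorphic $f$ and a constant $c$, one has $\mathrm{ord}_{s=z} f(s+c) = \mathrm{ord}_{s=z+c} f(s)$. Combining (i) and (ii) with the factorizations above yields
\[
\mathrm{ord}_{s=z}\zeta_{\mathrm{even}}(\perf_\dg(X);s) \;=\; \sum_{w\,\mathrm{even}} \mathrm{ord}_{s=z}\zeta_w\!\left(X;\, s+\tfrac{w}{2}\right) \;=\; \sum_{w\,\mathrm{even}} \mathrm{ord}_{s=z+\frac{w}{2}}\zeta_w(X;s),
\]
and identically for the odd case with $\tfrac{w}{2}$ replaced by $\tfrac{w-1}{2}$.

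There is essentially no obstacle: once Corollary \ref{cor:main} is in hand, the statement is a formal bookkeeping exercise about meromorphic orders. The only mild point worth flagging is that the displayed identity in Corollary \ref{cor:main} depends a priori on the choice of embedding $\iota\colon K\hookrightarrow \bbC$ used to define $\zeta_{\mathrm{even}}$ and $\zeta_{\mathrm{odd}}$; however, the right-hand sides $\zeta_w(X;s)$ are intrinsic (coming from crystalline cohomology via the embedding $W(k)_{1/p}\hookrightarrow \bbC$), so the equality of orders holds on the nose for any $\iota$ consistent with the one implicit in Corollary \ref{cor:main}, and no further argument is required.
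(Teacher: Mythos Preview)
Your proposal is correct and matches the paper's approach: Corollary~\ref{cor:order} is stated immediately after Corollary~\ref{cor:main} with no separate proof, precisely because it follows by applying the additivity and translation properties of $\mathrm{ord}_{s=z}$ to the factorizations established there. Your side remark about the embedding $\iota$ is harmless but unnecessary, since (as noted in the proof of Corollary~\ref{cor:main}) the factors $\zeta_w(X;s)$ have integer coefficients and hence are independent of $\iota$.
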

\subsection*{$l$-adic absolute value}
Let $l\neq p$ be a prime number. Given a smooth proper $k$-scheme $X$ of dimension $d$, it is well-known that the eigenvalues of the automorphisms $\mathrm{Fr}^w, 0 \leq w\leq 2d$, are algebraic numbers and that all their $l$-adic conjugates have absolute value $1$. Let $\cA$ be a smooth proper $k$-linear dg category. Motivated by the aforementioned fact, Kontsevich also conjectured in \cite{Lefschetz,IAS} the following:

\vspace{0.1cm}

{\it Conjecture $\mathrm{W}^l_{\mathrm{nc}}(\cA)$: The eigenvalues of the automorphisms $\mathrm{F}_0$ and $\mathrm{F}_1$ are algebraic numbers and all their $l$-adic conjugates have absolute value $1$.}

\vspace{0.1cm}
 
The next result (partially) solves Kontsevich's conjecture:

\begin{theorem}\label{thm:new}
Assume that there exists an integer $C\gg 0$ (which depends on $\cA$) such that the eigenvalues of the automorphisms $\mathrm{F}_0$ and $\mathrm{F}_1$ become algebraic integers after multiplication by $q^C$. Under this assumption, the conjecture $\mathrm{W}^l_{\mathrm{nc}}(\cA)$ holds.
\end{theorem}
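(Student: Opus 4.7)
The plan is to transport the classical ``integrality plus Poincar\'e duality'' mechanism from the commutative setting into this noncommutative framework: the role of integrality is played by the hypothesis, and the role of Poincar\'e duality is played by the Serre-type self-duality enjoyed by $TP_\ast$ on any smooth proper dg category. To begin, the hypothesis already gives that each eigenvalue $\lambda$ of $F_0$ or $F_1$ is an algebraic number (it differs from the algebraic integer $q^{C}\lambda$ by $q^{-C}\in\overline{\bbQ}$), which settles the algebraicity half of $\mathrm{W}^l_{\mathrm{nc}}(\cA)$. Fixing a prime $l\neq p$, an embedding $\iota_l\colon \overline{\bbQ}\hookrightarrow \overline{\bbQ}_l$, and writing $|\cdot|_l$ for the extension of the usual $l$-adic absolute value, I would first exploit the identity $|q|_l=1$ (which is the entire reason the argument requires $l\neq p$) to deduce from $q^C\lambda\in\overline{\bbZ}$ that
\[
|\iota_l(\lambda)|_l \;=\; |q|_l^{-C}\cdot|\iota_l(q^C\lambda)|_l \;\leq\; 1.
\]

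The matching lower bound $|\iota_l(\lambda)|_l\geq 1$ is where the noncommutative content enters. Here I would invoke noncommutative Poincar\'e/Serre duality for a smooth proper dg category $\cA$: the finite-dimensional $K$-vector spaces $TP_0(\cA)_{1/p}$ and $TP_1(\cA)_{1/p}$ each carry a non-degenerate bilinear pairing with values in $K$ with respect to which the cyclotomic Frobenius is compatible. Consequently, the multiset of eigenvalues of $F_0$ is stable under $\lambda\mapsto \lambda^{-1}$ and the multiset of eigenvalues of $F_1$ is stable under $\lambda\mapsto q/\lambda$. The factor of $q$ in the odd case reflects the half-Tate twist built into the normalisation of $F_1$ and matches, in the commutative model $\cA=\perf_\dg(X)$, the classical crystalline Poincar\'e pairing combined with the identification, recorded in the proof of Theorem~\ref{thm:main}, of $TP_\ast(\perf_\dg(X))_{1/p}$ with a suitable Tate twist of crystalline cohomology (compare Corollary~\ref{cor:main}). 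Granted this duality, $1/\lambda$ in the even case and $q/\lambda$ in the odd case are themselves eigenvalues; applying the hypothesis to them shows that $q^C/\lambda$ and $q^{C+1}/\lambda$ are algebraic integers, and another invocation of $|q|_l=1$ yields $|\iota_l(\lambda)^{-1}|_l\leq 1$, i.e.\ $|\iota_l(\lambda)|_l\geq 1$. Combining both bounds gives $|\iota_l(\lambda)|_l=1$.

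The hard part will be the construction of the Frobenius-equivariant noncommutative Poincar\'e pairing on $TP_\ast(\cA)_{1/p}$ for an arbitrary smooth proper dg category $\cA$, and the verification that the odd case involves precisely the $q$-twist predicted by the commutative computation. The remainder of the argument is a purely formal manipulation of $l$-adic valuations whose entire arithmetic content is the identity $|q|_l=1$ for $l\neq p$, and the bookkeeping runs in direct parallel to the classical proof that the Frobenius eigenvalues on $l$-adic cohomology are $l$-adic units.
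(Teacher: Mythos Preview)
Your proposal is correct and follows essentially the same approach as the paper: the key input is precisely the Frobenius-equivariant perfect pairing on $TP_\ast(\cA)_{1/p}$ (this is Proposition~\ref{prop:pairings} in the paper, built from the evaluation morphism $U(\cA^\op)_\bbQ\otimes U(\cA)_\bbQ\to U(k)_\bbQ$ together with the identification $TP_\ast(\cA^\op)_{1/p}=TP_\ast(\cA)_{1/p}$), yielding exactly the symmetries $\lambda\mapsto\lambda^{-1}$ and $\lambda\mapsto q/\lambda$ you predicted. The only cosmetic difference is in the arithmetic bookkeeping: the paper factorises the ideals $(q^C\lambda)$ and $(q^C\lambda^{-1})$ in $\cO_{\bbQ(\lambda)}$ and observes that all prime factors lie over $p$, whereas you argue directly with $l$-adic absolute values via $|q|_l=1$; these are equivalent formulations of the same fact.
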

As explained in Remark \ref{rk:new} below, the assumption of Theorem \ref{thm:new} holds when $\cA=\perf_\dg(X)$ with $X$ a smooth proper $k$-scheme. Consult \S\ref{sec:applications1} for further examples.
\subsection*{Functional equation}

Thanks to the work of M.~Artin and Grothendieck (consult \cite{Grothendieck-Bourbaki} and the references therein), the Hasse-Weil zeta function $\zeta(X;s)$ of a smooth proper $k$-scheme $X$ of dimension $d$ is known to satisfy the functional equation
\begin{equation}\label{eq:functional1111}
\zeta(X;s)=\pm q^{\chi(X)s} \cdot q^{-\frac{\chi(X)}{2} d}\cdot \zeta(X;d-s)\,,
\end{equation}
where $\chi(X)$ stands for the Euler characteristic of $X$. Morally speaking, the equality \eqref{eq:functional1111} describes a ``symmetry'' of $\zeta(X;s)$ along the vertical line $\mathrm{Re}(s)=\frac{d}{2}$. This functional equation admits the following noncommutative counterpart:
\begin{theorem}\label{thm:functional}
Given a smooth proper $k$-linear dg category $\cA$, we have the following functional equations
\begin{eqnarray*}\label{eq:functional1}
\zeta_{\mathrm{even}}(\cA;s) & = &  (-1)^{\chi_0(\cA)} \cdot q^{\chi_0(\cA)s} \cdot \mathrm{det}(\mathrm{F}_0 \otimes_{K, \iota}\bbC) \cdot \zeta_{\mathrm{even}}(\cA;-s) \\
\zeta_{\mathrm{odd}}(\cA;s) & = & (-1)^{\chi_1(\cA)} q^{-\chi_1(\cA)(1-s)} \cdot \mathrm{det}(\mathrm{F}_1 \otimes_{K, \iota}\bbC) \cdot \zeta_{\mathrm{odd}}(\cA;1-s)\,,
\end{eqnarray*}
where $\chi_0(\cA):=\mathrm{dim}_KTP_0(\cA)_{1/p}$ and $\chi_1(\cA):=\mathrm{dim}_KTP_1(\cA)_{1/p}$.
\end{theorem}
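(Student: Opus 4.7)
My plan for Theorem \ref{thm:functional} is to deduce both functional equations as formal consequences of two ingredients: an elementary determinant identity, and a duality on the eigenvalues of the cyclotomic Frobenius coming from the smoothness and properness of $\cA$.

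First, I would record the standard determinant identity: for any automorphism $F$ of a finite-dimensional $\bbC$-vector space of dimension $n$, one has $\det(\id - tF) = (-t)^n \det(F)\cdot \det(\id - t^{-1}F^{-1})$. Setting $t = q^{-s}$ and applying this to $\mathrm{F}_0 \otimes_{K,\iota}\bbC$ and to $\mathrm{F}_1\otimes_{K,\iota}\bbC$ expresses $\zeta_{\mathrm{even}}(\cA;s)$ and $\zeta_{\mathrm{odd}}(\cA;s)$ in terms of determinants involving the \emph{inverse} Frobenius evaluated at the reflected point.

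Next, I would invoke the noncommutative Poincar\'e duality for smooth proper dg categories: the finite-dimensional $K$-vector space $TP_\ast(\cA)_{1/p}$ carries a perfect Mukai-type pairing, inherited from the Serre/Calabi--Yau structure on $\cA\otimes_k \cA^{\mathrm{op}}$, which is compatible with the cyclotomic Frobenius up to the Bott-element scaling recalled in \S\ref{sub:cyclotomic}. Because the Frobenius acts on the Bott generator by multiplication by $q$, the upshot is that the multiset of eigenvalues of $\mathrm{F}_0\otimes \bbC$ is stable under $\lambda \mapsto \lambda^{-1}$ (so that $\det(\mathrm{F}_0 \otimes \bbC)^2 = 1$), while the multiset of eigenvalues of $\mathrm{F}_1\otimes \bbC$ is stable under $\beta \mapsto q\beta^{-1}$ (so that $\det(\mathrm{F}_1\otimes \bbC)^2 = q^{\chi_1(\cA)}$).

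Finally, I would assemble the pieces. In the even case, the symmetry $\lambda \mapsto \lambda^{-1}$ turns $\prod_i(1-q^s\lambda_i^{-1})$ into $\zeta_{\mathrm{even}}(\cA;-s)^{-1}$, and $\det(\mathrm{F}_0\otimes \bbC)^{-1}=\det(\mathrm{F}_0\otimes \bbC)$; the determinant identity then rearranges into the stated $(-1)^{\chi_0(\cA)}q^{\chi_0(\cA)s}\det(\mathrm{F}_0\otimes \bbC)$ factor. In the odd case, the symmetry $\beta \mapsto q\beta^{-1}$ turns $\prod_j(1-q^s\beta_j^{-1})$ into $\zeta_{\mathrm{odd}}(\cA;1-s)^{-1}$, and absorbing the $q^{-\chi_1(\cA)}$ coming from $\det(\mathrm{F}_1\otimes \bbC)^{-1} = q^{-\chi_1(\cA)}\det(\mathrm{F}_1\otimes \bbC)$ produces precisely the stated $q^{-\chi_1(\cA)(1-s)}\det(\mathrm{F}_1\otimes \bbC)$ factor.

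The main obstacle is the second step: producing the perfect pairing on $TP_\ast(\cA)_{1/p}$ together with the precise compatibility between the cyclotomic Frobenius and the Bott element. This is the noncommutative avatar of crystalline Poincar\'e duality and should already be at hand from the constructions of \S\ref{sub:cyclotomic}; granting it, the rest of the proof is a short bookkeeping of signs and powers of $q$.
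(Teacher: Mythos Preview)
Your proposal is correct and follows essentially the same route as the paper. The paper packages your two ingredients into a single lemma: given a perfect pairing $\theta\colon V\otimes_K W\to K$ and automorphisms $f,g$ with $\theta\circ(f\otimes g)=\lambda\cdot\theta$, one has
\[
\det(\id - tg\,|\,W)=\frac{(-1)^{\dim V}\lambda^{\dim V}t^{\dim V}}{\det(f\,|\,V)}\cdot\det(\id-\lambda^{-1}t^{-1}f\,|\,V),
\]
and then constructs the required pairings $\theta_0,\theta_1$ on $TP_0$ and $TP_1$ with $\lambda=1$ and $\lambda=q$ respectively. This yields $\det(\mathrm{F}_0)$ (rather than $\det(\mathrm{F}_0)^{-1}$) directly, so the paper does not need your intermediate observation $\det(\mathrm{F}_0)^2=1$; your splitting into ``tautological identity $+$ eigenvalue symmetry'' is equivalent but slightly longer.

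The only point where you are vaguer than the paper is the origin of the pairing. Rather than invoking a Serre/Calabi--Yau structure, the paper obtains it cleanly from the rigid symmetric monoidal structure: apply the symmetric monoidal functor $TP_\ast(-)_{1/p}$ (and its cyclotomic Frobenius, viewed as a monoidal natural transformation) to the evaluation morphism $U(\cA^\op)_\bbQ\otimes U(\cA)_\bbQ\to U(k)_\bbQ$ in $\NChow(k)_\bbQ$, and then read off the degree-$0$ and degree-$2$ components; the scalar $\lambda=q$ in the odd case is exactly $\mathrm{F}_2^k=q\cdot-$ on $TP_2(k)_{1/p}$. Finally one uses $TP_\ast(\cA^\op)_{1/p}=TP_\ast(\cA)_{1/p}$ and $\mathrm{F}_\ast^{\cA^\op}=\mathrm{F}_\ast^{\cA}$ to identify $V=W$ and $f=g$. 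This is precisely the ``noncommutative Poincar\'e duality'' you anticipated, and is indeed available from \S\ref{sub:cyclotomic}.
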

Intuitively speaking, Theorem \ref{thm:functional} describes a ``symmetry'' of $\zeta_{\mathrm{even}}(\cA;s)$, resp. $\zeta_{\mathrm{odd}}(\cA;s)$, along the vertical line $\mathrm{Re}(s)=0$, resp. $\mathrm{Re}(s)=\frac{1}{2}$.
\begin{corollary}\label{cor:functional}
When $\cA=\perf_\dg(X)$, with $X$ a smooth proper $k$-scheme, the functional equations of Theorem \ref{thm:functional} reduce to the following functional equations 
\begin{eqnarray*}
\prod_{w\,\mathrm{even}} \zeta_w(X;s+\frac{w}{2}) &= & \pm q^{\chi_{\mathrm{even}}(X)s} \cdot \prod_{w\, \mathrm{even}} \zeta_w(X;-s+\frac{w}{2}) \\
\prod_{w\,\mathrm{odd}} \zeta_w(X;s+\frac{w-1}{2}) & = & \pm q^{\chi_{\mathrm{odd}}(X)s} \cdot \prod_{w\, \mathrm{odd}} \zeta_w(X;1-s+\frac{w-1}{2})\,,
\end{eqnarray*}
where $\chi_{\mathrm{even}}(X):=\sum_{w\,\mathrm{even}} \mathrm{dim}_KH^w_{\mathrm{crys}}(X)$ and $\chi_{\mathrm{odd}}(X):=\sum_{w\,\mathrm{odd}} \mathrm{dim}_KH^w_{\mathrm{crys}}(X)$.
\end{corollary}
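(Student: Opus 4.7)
The plan is direct: specialize Theorem~\ref{thm:functional} to $\cA=\perf_\dg(X)$ and use Corollary~\ref{cor:main} to rewrite each even, resp.\ odd, Hasse-Weil zeta function of $\perf_\dg(X)$ appearing on both sides as the product $\prod_{w\,\mathrm{even}}\zeta_w(X;\bullet+\tfrac{w}{2})$, resp.\ $\prod_{w\,\mathrm{odd}}\zeta_w(X;\bullet+\tfrac{w-1}{2})$. Under the decomposition $TP_n(\perf_\dg(X))_{1/p}\simeq\bigoplus_{w\equiv n\,(2)}H^w_{\mathrm{crys}}(X)$ (the same identification underlying Corollary~\ref{cor:main}, discussed in~\S\ref{sub:cyclotomic}), one also has $\chi_0(\perf_\dg(X))=\chi_{\mathrm{even}}(X)$ and $\chi_1(\perf_\dg(X))=\chi_{\mathrm{odd}}(X)$, and the cyclotomic Frobenius acts on $H^w_{\mathrm{crys}}(X)$ as $q^{-w/2}\mathrm{Fr}^w$ in the even case and as $q^{-(w-1)/2}\mathrm{Fr}^w$ in the odd case (these normalizations being forced by Corollary~\ref{cor:main}).

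What remains is to show that the residual scalar prefactors collapse to the asserted sign: namely $(-1)^{\chi_0(\cA)}\det(\mathrm{F}_0\otimes_{K,\iota}\bbC)$ in the even case, and, after writing $q^{-\chi_1(1-s)}=q^{-\chi_1}q^{\chi_1 s}$, the scalar $(-1)^{\chi_1(\cA)}q^{-\chi_1(\cA)}\det(\mathrm{F}_1\otimes_{K,\iota}\bbC)$ in the odd case. The key tool is Poincaré duality on crystalline cohomology: the perfect, Frobenius-equivariant pairing
$$H^w_{\mathrm{crys}}(X)\times H^{2d-w}_{\mathrm{crys}}(X)\longrightarrow H^{2d}_{\mathrm{crys}}(X)\simeq K(-d)$$
translates into the determinantal identity $\det(\mathrm{Fr}^w)\det(\mathrm{Fr}^{2d-w})=q^{db_w}$ (with $b_w:=\dim_KH^w_{\mathrm{crys}}(X)$), together with $\det(\mathrm{Fr}^d)=\pm q^{db_d/2}$ at the middle degree. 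Pairing Poincaré-dual contributions $(w,2d-w)$ and combining with the elementary identities $\sum_{w\,\mathrm{even}}wb_w=d\,\chi_{\mathrm{even}}(X)$ and $\sum_{w\,\mathrm{odd}}(w-1)b_w=(d-1)\,\chi_{\mathrm{odd}}(X)$ (both obtained via the involution $w\leftrightarrow 2d-w$ and $b_w=b_{2d-w}$), the half-integer powers of $q$ coming from the normalizations of $\mathrm{F}_0$, $\mathrm{F}_1$ cancel out cleanly against the duality relation, leaving only the advertised sign.

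The main obstacle, while essentially bookkeeping, is tracking the half-integer exponents of $q$ in the odd case and, separately, the middle-degree contribution $w=d$ when $d$ is odd, where the duality relation $\det(\mathrm{Fr}^d)^2=q^{db_d}$ only determines $\det(\mathrm{Fr}^d)$ up to sign. Once these are assembled via the Poincaré-duality relation and the summation identities above, the constants reduce to $\pm 1$ and the functional equations of Corollary~\ref{cor:functional} drop out.
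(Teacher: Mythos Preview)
Your proposal is correct and follows essentially the same route as the paper: specialize Theorem~\ref{thm:functional}, identify $\chi_0,\chi_1$ and the action of $\mathrm{F}_0,\mathrm{F}_1$ via the crystalline comparison \eqref{eq:Scholze1}--\eqref{eq:auto-2}, use the Poincar\'e-duality relation $\det(\mathrm{Fr}^w)\det(\mathrm{Fr}^{2d-w})=q^{d\beta_w}$ together with $\beta_w=\beta_{2d-w}$ to reduce the determinantal prefactors to a sign, and finish with Corollary~\ref{cor:main}. One small remark: the normalizations $\mathrm{F}_0|_{H^w}=q^{-w/2}\mathrm{Fr}^w$ and $\mathrm{F}_1|_{H^w}=q^{-(w-1)/2}\mathrm{Fr}^w$ are not merely ``forced by Corollary~\ref{cor:main}'' but are the content of the Hesselholt--Scholze comparison invoked in the proof of Theorem~\ref{thm:main}; it is cleaner to cite that directly.
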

\begin{remark}[Related work]
In \cite{finite} we developed a general theory of (Hasse-Weil) zeta functions for smooth proper dg categories equipped with an endomorphism. Among other applications, this theory led to a far-reaching noncommutative generalization of the results of Dwork \cite{Dwork} and Grothendieck \cite{Grothendieck-Bourbaki} concerning the rationality and the functional equation of the classical (Hasse-Weil) zeta function. 
\end{remark}
\subsection*{Strong form of the Tate conjecture}
Given a smooth proper $k$-scheme $X$ of dimension $d$ and an integer $0\leq i\leq d$, let us write $\cZ^i(X)_\bbQ/_{\!\sim\mathrm{num}}$ for the $\bbQ$-vector space of algebraic cycles of codimension $i$ on $X$ up to numerical equivalence.

In the mid sixties, Tate \cite{Tate} conjectured the following:

\vspace{0.1cm}

{\it Conjecture $\mathrm{ST}(X)$: The order $\mathrm{ord}_{s=j}\zeta(X;s)$ of the Hasse-Weil zeta function $\zeta(X;s)$ at the pole $s=j$, with $0\leq j \leq d$, is equal to $-\mathrm{dim}_\bbQ\cZ^j(X)_\bbQ/_{\!\sim\mathrm{num}}$.}

\vspace{0.1cm}

This conjecture is usually called the ``strong form of the Tate conjecture''. It holds for $0$-dimensional schemes, for curves, for abelian varieties of dimension $\leq 3$, and also for $K3$-surfaces. Besides these cases (and some other cases scattered in the literature), it remains wide open.

\vspace{0.1cm}

Given a smooth proper $k$-linear dg category $\cA$, recall from \S\ref{sub:numerical} below the definition of its numerical Grothendieck group $K_0(\cA)_\bbQ/_{\!\sim \mathrm{num}}$. The strong form of the Tate conjecture admits the following noncommutative counterpart:
 
\vspace{0.1cm}

{\it Conjecture $\mathrm{ST}_{\mathrm{nc}}(\cA)$: The order $\mathrm{ord}_{s=0}\zeta_{\mathrm{even}}(\cA;s)$ of the even Hasse-Weil zeta function $\zeta_{\mathrm{even}}(\cA;s)$ at the pole $s=0$ is equal to $- \mathrm{dim}_\bbQ K_0(\cA)_\bbQ/_{\!\sim \mathrm{num}}$.}

\begin{remark}[Alternative formulation]\label{rk:multiplicity}
Note that, by definition of the even Hasse-Weil zeta function of $\cA$, the order $\mathrm{ord}_{s=0}\zeta_{\mathrm{even}}(\cA;s)$ of the even Hasse-Weil zeta function $\zeta_{\mathrm{even}}(\cA;s)$ at the pole $s=0$ agrees with the algebraic multiplicity of the eigenvalue $q^0=1$ of the automorphism $\mathrm{F}_0\otimes_{K, \iota}\bbC$ (or, equivalently, of $\mathrm{F}_0$). Hence, the conjecture $\mathrm{ST}_{\mathrm{nc}}(\cA)$ may be alternatively formulated as follows: {\em the algebraic multiplicity of the eigenvalue $1$ of $\mathrm{F}_0$ agrees with $\mathrm{dim}_\bbQ K_0(\cA)_\bbQ/_{\!\sim \mathrm{num}}$}. This shows, in particular, that the integer $\mathrm{ord}_{s=0}\zeta_{\mathrm{even}}(\cA;s)$ is independent of the embedding $\iota\colon K \hookrightarrow \bbC$ used in the definition of $\zeta_{\mathrm{even}}(\cA;s)$.
\end{remark}
\begin{remark}[Equivalent conjectures]
As proved in Theorem \ref{thm:alternative} below, the noncommutative strong form of the Tate conjecture is equivalent to the noncommutative $p$-version of the Tate conjecture plus the noncommutative standard conjecture of type $D$. Moreover, when all smooth proper dg categories are considered simultaneously, the noncommutative strong form of the Tate conjecture becomes equivalent to the fully-faithfulness of the enriched topological periodic cyclic homology functor; consult \S\ref{sub:enriched} and \S\ref{sub:embedding} below for details.
\end{remark}
The next result relates the noncommutative strong form of the Tate conjecture with the strong form of the Tate conjecture:
\begin{theorem}\label{thm:strong}
Given a smooth proper $k$-scheme $X$, we have the equivalence of conjectures $\mathrm{ST}_{\mathrm{nc}}(\perf_\dg(X))\Leftrightarrow \mathrm{ST}(X)$.
\end{theorem}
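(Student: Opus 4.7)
The plan is to combine Corollary~\ref{cor:order}, the Weil conjecture (proved by Deligne and recalled before Theorem~\ref{thm:main}), and the classical identification of the numerical Grothendieck group of a smooth proper scheme with the direct sum of its numerical Chow groups, so as to translate the noncommutative identity into the sum of the commutative ones; the converse implication is then to be extracted from an unconditional termwise lower bound.

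For the first reduction, Deligne's theorem guarantees that every eigenvalue of $\mathrm{Fr}^w$ on $H^w_{\mathrm{crys}}(X)$ has complex absolute value $q^{w/2}$, so the positive integer $q^j$ occurs as an eigenvalue of $\mathrm{Fr}^w$ only when $w=2j$. In view of the weight decomposition \eqref{eq:factorization1} this yields $\mathrm{ord}_{s=j}\zeta(X;s)=\mathrm{ord}_{s=j}\zeta_{2j}(X;s)$ for every integer $0\leq j\leq d$. Specializing Corollary~\ref{cor:order} at $z=0$ and combining with this equality gives
\[
\mathrm{ord}_{s=0}\zeta_{\mathrm{even}}(\perf_\dg(X);s)=\sum_{w\,\mathrm{even}}\mathrm{ord}_{s=w/2}\zeta_w(X;s)=\sum_{j=0}^{d}\mathrm{ord}_{s=j}\zeta(X;s).
\]
For the right-hand side of $\mathrm{ST}_{\mathrm{nc}}(\perf_\dg(X))$, I would appeal to the rational Chern character together with the comparison of noncommutative and commutative numerical equivalence recalled in \S\ref{sub:numerical}, to obtain a canonical $\bbQ$-vector space isomorphism $K_0(\perf_\dg(X))_\bbQ/_{\!\sim\mathrm{num}}\cong \bigoplus_{j=0}^{d}\cZ^j(X)_\bbQ/_{\!\sim\mathrm{num}}$. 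With these two ingredients in hand, $\mathrm{ST}_{\mathrm{nc}}(\perf_\dg(X))$ is equivalent to the single identity $-\sum_{j=0}^{d}\mathrm{ord}_{s=j}\zeta(X;s)=\sum_{j=0}^{d}\dim_\bbQ\cZ^j(X)_\bbQ/_{\!\sim\mathrm{num}}$, i.e.\ to the sum of the $d+1$ pointwise equalities that constitute $\mathrm{ST}(X)$.

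The direction $\mathrm{ST}(X)\Rightarrow \mathrm{ST}_{\mathrm{nc}}(\perf_\dg(X))$ is then immediate by summation. For the converse I would use the classical factorization of the crystalline cycle class map as a surjection $\cZ^j(X)_\bbQ/_{\!\sim\mathrm{hom}}\twoheadrightarrow \cZ^j(X)_\bbQ/_{\!\sim\mathrm{num}}$ followed by an injection of $\cZ^j(X)_\bbQ/_{\!\sim\mathrm{hom}}$ into the $q^j$-eigenspace of $\mathrm{Fr}^{2j}$ acting on $H^{2j}_{\mathrm{crys}}(X)$, which (combined with the first step) yields the unconditional bound $\dim_\bbQ \cZ^j(X)_\bbQ/_{\!\sim\mathrm{num}}\leq -\mathrm{ord}_{s=j}\zeta(X;s)$ for every $0\leq j\leq d$. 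Equality in the global sum then forces equality in each summand, which is exactly $\mathrm{ST}(X)$.

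The main obstacle is the identification of the noncommutative numerical Grothendieck group with $\bigoplus_j \cZ^j(X)_\bbQ/_{\!\sim\mathrm{num}}$: this does not follow from the definitions, and rests on matching the noncommutative Euler bilinear pairing on $K_0$ with the classical intersection pairing via the Chern character---a comparison carried out in the author's earlier work on noncommutative motives and presumably packaged in \S\ref{sub:numerical}.
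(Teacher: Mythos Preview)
Your argument is correct, and it is a genuinely more direct route than the paper's main proof. The paper proves Theorem~\ref{thm:strong} by first establishing the general equivalence $\mathrm{ST}_{\mathrm{nc}}(\cA)\Leftrightarrow \mathrm{T}^p_{\mathrm{nc}}(\cA)+\mathrm{D}_{\mathrm{nc}}(\cA)$ (Theorem~\ref{thm:alternative}), then invoking the known equivalences $\mathrm{T}^p_{\mathrm{nc}}(\perf_\dg(X))\Leftrightarrow \mathrm{T}^p(X)$ and $\mathrm{D}_{\mathrm{nc}}(\perf_\dg(X))\Leftrightarrow \mathrm{D}(X)$ together with Milne's classical equivalence $\mathrm{ST}(X)\Leftrightarrow \mathrm{T}^p(X)+\mathrm{D}(X)$. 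Your approach bypasses Theorem~\ref{thm:alternative} entirely: the implication $\mathrm{ST}(X)\Rightarrow \mathrm{ST}_{\mathrm{nc}}(\perf_\dg(X))$ you give is exactly what the paper records in the Remark following its proof (and the identification $K_0(\perf_\dg(X))_\bbQ/_{\!\sim\mathrm{num}}\simeq \bigoplus_j \cZ^j(X)_\bbQ/_{\!\sim\mathrm{num}}$ is cited there from \cite[Prop.~1.7(i)]{Rigidity}, not \S\ref{sub:numerical}). For the converse, your termwise inequality $\dim_\bbQ \cZ^j(X)_\bbQ/_{\!\sim\mathrm{num}}\leq -\mathrm{ord}_{s=j}\zeta(X;s)$ is the standard chain (numerical $\leq$ homological $\leq$ geometric multiplicity of $q^j$ $\leq$ algebraic multiplicity), and forcing equality from the global sum is clean. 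What the paper's route buys is the intrinsic structural statement Theorem~\ref{thm:alternative}, valid for arbitrary smooth proper dg categories and reused throughout \S\ref{sec:applications1}; what your route buys is a self-contained proof of Theorem~\ref{thm:strong} that avoids the somewhat delicate argument with the pairing $\theta_0$ and Lemma~\ref{lem:general}.
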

Similarly to Theorem \ref{thm:main}, Theorem \ref{thm:strong} shows that the strong form of the Tate conjecture belongs not only to the realm of algebraic geometry but also to the broad noncommutative setting of smooth proper dg categories.
\section{Applications to noncommutative geometry}\label{sec:applications1}

Let $k=\bbF_q$ be a finite field of characteristic $p$ and $l\neq p$ a prime number. In this section, making use of Theorems \ref{thm:main}, \ref{thm:new}, and \ref{thm:strong}, we prove the noncommutative Weil conjecture(s) (i.e. $\mathrm{W}_{\mathrm{nc}}(-)$ and $\mathrm{W}^l_{\mathrm{nc}}(-)$), as well as the noncommutative strong form of the Tate conjecture, in several (interesting) cases.

\subsection*{Twisted schemes}
Let $X$ be a smooth proper $k$-scheme and $\cF$ a sheaf of Azumaya algebras over $X$. Similarly to $\perf_\dg(X)$, we can also consider the (smooth proper) dg category $\perf_\dg(X;\cF)$ of perfect complexes of $\cF$-modules.
\begin{theorem}\label{thm:twist-new}
We have the following equivalences of conjectures:
\begin{eqnarray*}
\mathrm{W}(X) \Leftrightarrow \mathrm{W}_{\mathrm{nc}}(\perf_\dg(X;\cF)) &&
\mathrm{ST}(X) \Leftrightarrow \mathrm{ST}_{\mathrm{nc}}(\perf_\dg(X;\cF))\,.
\end{eqnarray*}
Moreover, the conjecture $\mathrm{W}^l_{\mathrm{nc}}(\perf_\dg(X;\cF))$ holds.
\end{theorem}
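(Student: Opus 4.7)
The plan is to reduce all three assertions to the untwisted case (already handled by Theorems~\ref{thm:main}, \ref{thm:new} and \ref{thm:strong}) by showing that the topological periodic cyclic homology and the numerical Grothendieck group of $\perf_\dg(X;\cF)$ are canonically identified with those of $\perf_\dg(X)$, in a way that preserves all the relevant structure.

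The first (and crucial) step is to establish a Frobenius-equivariant isomorphism
$$TP_\ast(\perf_\dg(X;\cF))_{1/p} \isotoo TP_\ast(\perf_\dg(X))_{1/p}\,,$$
i.e.\ an isomorphism of finite-dimensional $K$-vector spaces intertwining the cyclotomic Frobenii $\mathrm{F}_0$ and $\mathrm{F}_1$ on both sides. One way to carry this out is to invoke the fact, developed in the author's earlier work on twisted noncommutative motives, that $U(\perf_\dg(X;\cF)) \simeq U(\perf_\dg(X))$ in the category of noncommutative motives, and then exploit the naturality of the cyclotomic Frobenius with respect to additive invariants. An alternative route is to pick an \'etale cover $Y \to X$ trivializing $\cF$, invoke \'etale descent for $TP$, and transport the Frobenius through the resulting matrix-algebra identification.

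With this identification in hand the remainder of the argument is essentially formal. Since $\mathrm{F}_0$ and $\mathrm{F}_1$ on the two sides have the same spectrum, $\mathrm{W}_{\mathrm{nc}}(\perf_\dg(X;\cF))$ holds if and only if $\mathrm{W}_{\mathrm{nc}}(\perf_\dg(X))$ does, and Theorem~\ref{thm:main} then yields the first equivalence $\mathrm{W}(X) \Leftrightarrow \mathrm{W}_{\mathrm{nc}}(\perf_\dg(X;\cF))$. Moreover, these eigenvalues coincide with the eigenvalues of the crystalline Frobenius on $H^\ast_{\mathrm{crys}}(X)$, which by Deligne (via Katz-Messing) are algebraic and become algebraic integers after multiplication by some $q^C$; consequently the hypothesis of Theorem~\ref{thm:new} is satisfied, delivering the unconditional statement $\mathrm{W}^l_{\mathrm{nc}}(\perf_\dg(X;\cF))$. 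For the Tate part one upgrades the motivic identification to a compatible isomorphism
$$K_0(\perf_\dg(X;\cF))_\bbQ/_{\!\sim \num} \isotoo K_0(\perf_\dg(X))_\bbQ/_{\!\sim \num}\,,$$
which, together with the $\mathrm{F}$-equivariant identification of $TP$, reduces $\mathrm{ST}_{\mathrm{nc}}(\perf_\dg(X;\cF))$ to $\mathrm{ST}_{\mathrm{nc}}(\perf_\dg(X))$, so that Theorem~\ref{thm:strong} finishes the job.

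The main obstacle lies squarely in the first step, namely securing the identification of $TP$ (and of numerical $K_0$) for twisted vs.\ untwisted schemes \emph{together with} the cyclotomic Frobenius. The identification of the underlying vector spaces is by now standard for additive invariants of Azumaya-twisted categories; what requires care is the compatibility with the cyclotomic Frobenius, which is not induced by any endomorphism of $\perf_\dg(X;\cF)$ itself. One has to check that the comparison map comes from a morphism at the level of $THH$-spectra equipped with their natural $S^1$-equivariant and Frobenius structures; once this compatibility is verified, all the arguments above go through mechanically.
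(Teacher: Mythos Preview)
Your approach is essentially the same as the paper's: reduce to the untwisted case via the isomorphism of noncommutative Chow motives $U(\perf_\dg(X;\cF))_\bbQ \simeq U(\perf_\dg(X))_\bbQ$ from \cite{Azumaya}, and then invoke Theorems~\ref{thm:main}, \ref{thm:new} (plus Remark~\ref{rk:new}), and~\ref{thm:strong}.

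The one place where you make your life harder than necessary is in what you call the ``main obstacle''. You worry about checking by hand that the comparison map is compatible with the cyclotomic Frobenius at the level of $THH$-spectra. In the paper this is a non-issue: \S\ref{sub:enriched} shows that the assignment $U(\cA)_\bbQ \mapsto (TP_\ast(\cA)_{1/p}, \mathrm{F}_\ast)$ is a \emph{functor} \eqref{eq:enriched} out of $\NChow(k)_\bbQ$ with values in $\mathrm{Aut}(K[v^{\pm1}])^\tau$. Hence any isomorphism in $\NChow(k)_\bbQ$ automatically yields an isomorphism of pairs $(TP_\ast,\mathrm{F}_\ast)$, with no further verification required. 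Likewise, the numerical Grothendieck group identification is automatic from \eqref{eq:numerical}, since the quotient functor $\NChow(k)_\bbQ \to \NNum(k)_\bbQ$ carries the motivic isomorphism along. So your ``upgrade'' step and your Frobenius-compatibility check are both already packaged into the machinery of \S\ref{sub:NC}--\S\ref{sub:enriched}; there is nothing extra to prove.
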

Morally speaking, Theorem \ref{thm:twist-new} shows that in what concerns the (noncommutative) Weil conjecture and the (noncommutative) strong form of the Tate conjecture, there is no difference between schemes and~twisted~schemes.

\subsection*{Calabi-Yau dg categories associated to hypersurfaces}
Let $X \subset \bbP^n$ be a smooth hypersurface of degree $\mathrm{deg}(X) \leq n+1$. As proved by Kuznetsov in \cite[Cor.~4.1]{Kuznetsov-CY}, we have a semi-orthogonal decomposition:
\begin{equation}\label{eq:semi-hyper}
\perf(X) = \langle \cT(X), \cO_X, \ldots, \cO_X(n-\mathrm{deg}(X))\rangle\,.
\end{equation}
Moreover, the associated dg category $\cT_\dg(X)$, defined as the dg enhancement of $\cT(X)$ induced from $\perf_\dg(X)$, is a smooth proper Calabi-Yau dg category of fractional dimension $\frac{(n+1)(\mathrm{deg}(X)-2)}{\mathrm{deg}(X)}$. 
\begin{remark}[Noncommutative K3-surfaces]
In the particular case where $n=5$ and $\mathrm{deg}(X)=3$, the dg categories $\cT_\dg(X)$ are usually called ``noncommutative $K3$-surfaces'' because they share many of the key properties of the dg categories of perfect complexes of the classical (smooth proper) $K3$-surfaces. Moreover, Kuznetsov conjectured in \cite{Kuznetsov1} that $\cT(X)$ is (Fourier-Mukai) equivalent to the category of perfect complexes of a $K3$-surface if and only if $X$ is rational.
\end{remark}
\begin{theorem}\label{thm:hypersurface}
We have the following equivalences of conjectures:
\begin{eqnarray*}
\mathrm{W}(X) \Leftrightarrow \mathrm{W}_{\mathrm{nc}}(\cT_\dg(X)) &&
\mathrm{ST}(X) \Leftrightarrow \mathrm{ST}_{\mathrm{nc}}(\cT_\dg(X))\,.
\end{eqnarray*}
Moreover, the conjecture $\mathrm{W}^l_{\mathrm{nc}}(\cT_\dg(X))$ holds.
\end{theorem}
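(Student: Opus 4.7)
The plan is to exploit the semi\-orthogonal decomposition \eqref{eq:semi-hyper} and the additivity of all the relevant invariants (topological periodic cyclic homology together with its cyclotomic Frobenius, and numerical $K$-theory) in order to reduce every statement about $\cT_\dg(X)$ to the corresponding statement about $\perf_\dg(X)$, where Theorems \ref{thm:main}, \ref{thm:strong}, and \ref{thm:new} already apply.

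More precisely, since $TP_\ast(-)_{1/p}$ is an additive invariant compatible with the cyclotomic Frobenius, the decomposition \eqref{eq:semi-hyper} yields, for $i\in\{0,1\}$, a splitting of $K$-vector spaces equipped with automorphisms
\[
\bigl(TP_i(\perf_\dg(X))_{1/p},\mathrm{F}_i\bigr) \;\simeq\; \bigl(TP_i(\cT_\dg(X))_{1/p},\mathrm{F}_i\bigr) \,\oplus\, \bigl(TP_i(k)_{1/p},\mathrm{F}_i\bigr)^{\oplus (n-\mathrm{deg}(X)+1)}.
\]
A direct computation shows $TP_0(k)_{1/p}=K$ with $\mathrm{F}_0=\id$, and $TP_1(k)_{1/p}=0$. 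Consequently, the multiset of eigenvalues of $\mathrm{F}_0$ on $TP_0(\perf_\dg(X))_{1/p}$ is obtained from that on $TP_0(\cT_\dg(X))_{1/p}$ by adjoining $n-\mathrm{deg}(X)+1$ copies of the eigenvalue $1$, while the multisets of eigenvalues of $\mathrm{F}_1$ on $TP_1(-)_{1/p}$ agree on both sides.

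For the first equivalence, Theorem \ref{thm:main} gives $\mathrm{W}(X)\Leftrightarrow \mathrm{W}_{\mathrm{nc}}(\perf_\dg(X))$. Since the extra eigenvalues $1$ trivially satisfy the required absolute value condition ($|1|=q^{0/2}=1$), the conjectures $\mathrm{W}_{\mathrm{nc}}(\perf_\dg(X))$ and $\mathrm{W}_{\mathrm{nc}}(\cT_\dg(X))$ are equivalent, giving the first claim. For the second equivalence, Theorem \ref{thm:strong} reduces the problem to comparing $\mathrm{ST}_{\mathrm{nc}}(\perf_\dg(X))$ with $\mathrm{ST}_{\mathrm{nc}}(\cT_\dg(X))$. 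Using the alternative formulation of Remark \ref{rk:multiplicity}, the algebraic multiplicity of the eigenvalue $1$ for $\mathrm{F}_0$ on $TP_0(\perf_\dg(X))_{1/p}$ equals the corresponding multiplicity on $TP_0(\cT_\dg(X))_{1/p}$ plus $n-\mathrm{deg}(X)+1$. Because numerical $K$-theory is also additive under semi-orthogonal decompositions, one has
\[
K_0(\perf_\dg(X))_{\bbQ}/_{\!\sim\mathrm{num}} \;\simeq\; K_0(\cT_\dg(X))_{\bbQ}/_{\!\sim\mathrm{num}}\,\oplus\,\bbQ^{\,n-\mathrm{deg}(X)+1},
\]
the extra summand coming from the $n-\mathrm{deg}(X)+1$ exceptional objects $\cO_X,\ldots,\cO_X(n-\mathrm{deg}(X))$. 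Subtracting the contributions from the exceptional pieces on both sides yields the desired equivalence $\mathrm{ST}_{\mathrm{nc}}(\perf_\dg(X))\Leftrightarrow \mathrm{ST}_{\mathrm{nc}}(\cT_\dg(X))$.

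Finally, for the conjecture $\mathrm{W}^l_{\mathrm{nc}}(\cT_\dg(X))$, one invokes Theorem \ref{thm:new}. Its hypothesis is satisfied for $\perf_\dg(X)$ by Remark \ref{rk:new}, and since the eigenvalues of $\mathrm{F}_0,\mathrm{F}_1$ on $TP_\ast(\cT_\dg(X))_{1/p}$ form a sub-multiset of those on $TP_\ast(\perf_\dg(X))_{1/p}$, the same integer $C\gg 0$ works for $\cT_\dg(X)$. Applying Theorem \ref{thm:new} then yields $\mathrm{W}^l_{\mathrm{nc}}(\cT_\dg(X))$. The only nontrivial point in this strategy is the compatibility of the cyclotomic Frobenius with the splitting induced by \eqref{eq:semi-hyper}, and this compatibility — which is ultimately what makes the whole argument work — is presumably established in the body of the paper (in the preliminaries on $TP$ and on $\mathrm{F}_0,\mathrm{F}_1$ referenced in \S\ref{sub:cyclotomic}); I would cite it at the outset of the proof.
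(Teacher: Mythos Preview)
Your proposal is correct and follows essentially the same route as the paper: exploit the semi-orthogonal decomposition \eqref{eq:semi-hyper}, use that the relevant conjectures hold trivially for the exceptional pieces $\cO_X(j)\simeq\perf_\dg(k)$, and then invoke Theorems \ref{thm:main}, \ref{thm:new} (with Remark \ref{rk:new}), and \ref{thm:strong}. The only difference is packaging: the paper works at the level of noncommutative Chow motives, where a semi-orthogonal decomposition yields a direct sum $U(\perf_\dg(X))_\bbQ\simeq U(\cT_\dg(X))_\bbQ\oplus U(k)_\bbQ^{\oplus(n-\deg(X)+1)}$ (Example \ref{ex:semi}), and then appeals once to the general stability of all three conjectures under direct sums and summands (Proposition \ref{prop:properties}, Corollary \ref{cor:sums}); you instead unpack this at the level of $TP$ and numerical $K$-theory and argue each conjecture by hand. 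Both are fine, but the motivic formulation handles the compatibility of $\mathrm{F}_\ast$ with the splitting automatically via the enriched functor \eqref{eq:enriched}, which is exactly the ``nontrivial point'' you flag at the end.
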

Similarly to Theorem \ref{thm:twist-new}, Theorem \ref{thm:hypersurface} shows that in what concerns the (noncommutative) Weil conjecture and the (noncommutative) strong form of the Tate conjecture, there is no difference between the hypersurface $X$ and the associated Calabi-Yau dg category $\cT_{\mathrm{dg}}(X)$.

\subsection*{Noncommutative gluings of schemes}
Let $X$ and $Y$ be two smooth proper $k$-schemes and $\mathrm{B}$ a perfect dg $\perf_\dg(X)\text{-}\perf_\dg(Y)$-bimodule. Following Orlov \cite[\S3.2]{Orlov}, we can consider the gluing $X \ominus_{\mathrm{B}}Y$ of $\perf_\dg(X)$ and $\perf_\dg(Y)$ via $\mathrm{B}$ (Orlov used a different notation). This new dg category is smooth and proper. 
\begin{theorem}\label{thm:glue}
We have the following equivalences of conjectures:
\begin{eqnarray*}
\mathrm{W}(X) +\mathrm{W}(Y) \Leftrightarrow \mathrm{W}_{\mathrm{nc}}(X \ominus_{\mathrm{B}}Y) &&
\mathrm{ST}(X) + \mathrm{ST}(Y) \Leftrightarrow \mathrm{ST}_{\mathrm{nc}}(X \ominus_{\mathrm{B}}Y)\,.
\end{eqnarray*}
Moreover, the conjecture $\mathrm{W}^l_{\mathrm{nc}}(X \ominus_{\mathrm{B}}Y)$ holds.
\end{theorem}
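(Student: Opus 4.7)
The plan is to exploit the fact that, by Orlov's construction, the gluing $X \ominus_\mathrm{B} Y$ comes equipped with a semi-orthogonal decomposition $\langle \perf_\dg(X), \perf_\dg(Y)\rangle$. Since topological periodic cyclic homology (localized at $p$) is an additive invariant of dg categories, this yields natural $K$-vector space isomorphisms
\[
TP_n(X \ominus_\mathrm{B} Y)_{1/p} \;\simeq\; TP_n(\perf_\dg(X))_{1/p} \oplus TP_n(\perf_\dg(Y))_{1/p} \qquad n \in \{0,1\},
\]
and the naturality of the cyclotomic Frobenius forces $\mathrm{F}_n$ on the left-hand side to act as the direct sum of the corresponding Frobenii on the two summands. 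Consequently, the eigenvalues of $\mathrm{F}_0$ (resp.\ of $\mathrm{F}_1$) on $TP_0(X \ominus_\mathrm{B} Y)_{1/p}$ (resp.\ on $TP_1(X \ominus_\mathrm{B} Y)_{1/p}$) are precisely the union, with multiplicities, of the eigenvalues attached to $\perf_\dg(X)$ and to $\perf_\dg(Y)$.

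Given this block-diagonal description of the Frobenii, the conjecture $\mathrm{W}_{\mathrm{nc}}(X\ominus_\mathrm{B} Y)$ is manifestly equivalent to the conjunction $\mathrm{W}_{\mathrm{nc}}(\perf_\dg(X)) + \mathrm{W}_{\mathrm{nc}}(\perf_\dg(Y))$, and Theorem \ref{thm:main} then rewrites this as $\mathrm{W}(X) + \mathrm{W}(Y)$. For the $l$-adic variant, the assumption of Theorem \ref{thm:new} is known to hold for $\perf_\dg(X)$ and $\perf_\dg(Y)$ (see Remark \ref{rk:new}); the direct sum decomposition above transports this assumption to $X\ominus_\mathrm{B} Y$, whereupon Theorem \ref{thm:new} yields $\mathrm{W}^l_{\mathrm{nc}}(X\ominus_\mathrm{B} Y)$.

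For the strong form of the Tate conjecture, I would combine two parallel splittings. First, the multiplicativity $\zeta_{\mathrm{even}}(X \ominus_\mathrm{B} Y;s) = \zeta_{\mathrm{even}}(\perf_\dg(X);s)\cdot \zeta_{\mathrm{even}}(\perf_\dg(Y);s)$, an immediate consequence of the Frobenius decomposition, gives $\mathrm{ord}_{s=0}\zeta_{\mathrm{even}}(X\ominus_\mathrm{B} Y;s) = \mathrm{ord}_{s=0}\zeta_{\mathrm{even}}(\perf_\dg(X);s) + \mathrm{ord}_{s=0}\zeta_{\mathrm{even}}(\perf_\dg(Y);s)$. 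Second, the semi-orthogonal decomposition induces an analogous splitting of the numerical Grothendieck group
\[
K_0(X \ominus_\mathrm{B} Y)_\bbQ/_{\!\sim\mathrm{num}} \;\simeq\; K_0(\perf_\dg(X))_\bbQ/_{\!\sim\mathrm{num}} \oplus K_0(\perf_\dg(Y))_\bbQ/_{\!\sim\mathrm{num}},
\]
which matches up dimensions. Comparing the two additive decompositions term by term reduces $\mathrm{ST}_{\mathrm{nc}}(X\ominus_\mathrm{B} Y)$ to the conjunction $\mathrm{ST}_{\mathrm{nc}}(\perf_\dg(X)) + \mathrm{ST}_{\mathrm{nc}}(\perf_\dg(Y))$, which by Theorem \ref{thm:strong} is $\mathrm{ST}(X) + \mathrm{ST}(Y)$.

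The only mildly technical step is verifying that both inputs respect Orlov's semi-orthogonal decomposition: namely, that the cyclotomic Frobenius on $TP_*(-)_{1/p}$ is compatible with the projection/inclusion functors of the SOD (so that the block-diagonal description is legitimate), and that numerical equivalence is respected by those same functors (so that the splitting of $K_0(-)_\bbQ$ descends to $K_0(-)_\bbQ/_{\!\sim\mathrm{num}}$). Both are formal consequences of the functoriality of $TP$ and of the bilinear Euler pairing defining numerical equivalence, and are arguably the only place where one must invoke more than bookkeeping; once they are in hand, everything reduces cleanly to Theorems \ref{thm:main}, \ref{thm:new}, and \ref{thm:strong}.
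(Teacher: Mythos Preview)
Your strategy is the paper's strategy: exploit Orlov's semi-orthogonal decomposition of $X\ominus_\mathrm{B} Y$ into $\perf_\dg(X)$ and $\perf_\dg(Y)$, split the relevant invariants accordingly, and then invoke Theorems~\ref{thm:main}, \ref{thm:new} (with Remark~\ref{rk:new}), and~\ref{thm:strong}. The paper packages the splitting through noncommutative Chow motives (Example~\ref{ex:semi} and Corollary~\ref{cor:sums}), while you work directly with $TP_\ast(-)_{1/p}$, the cyclotomic Frobenius, and $K_0(-)_\bbQ/_{\!\sim\mathrm{num}}$; the content is the same for $\mathrm{W}_{\mathrm{nc}}$ and $\mathrm{W}^l_{\mathrm{nc}}$.

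There is, however, a gap in your treatment of the strong Tate equivalence. From your two additive splittings you only obtain the equality of \emph{sums}
\[
\sum_{Z\in\{X,Y\}}\bigl(-\mathrm{ord}_{s=0}\zeta_{\mathrm{even}}(\perf_\dg(Z);s)\bigr)\;=\;\sum_{Z\in\{X,Y\}}\dim_\bbQ K_0(\perf_\dg(Z))_\bbQ/_{\!\sim\mathrm{num}},
\]
and this does not force the termwise equalities needed for $\mathrm{ST}_{\mathrm{nc}}(X\ominus_\mathrm{B} Y)\Rightarrow \mathrm{ST}_{\mathrm{nc}}(\perf_\dg(X))+\mathrm{ST}_{\mathrm{nc}}(\perf_\dg(Y))$. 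You need, and do not supply, the a~priori inequality $-\mathrm{ord}_{s=0}\zeta_{\mathrm{even}}(\cA;s)\geq \dim_\bbQ K_0(\cA)_\bbQ/_{\!\sim\mathrm{num}}$ for each summand (which does hold: the algebraic multiplicity of the eigenvalue $1$ of $\mathrm{F}_0$ dominates the geometric multiplicity, hence $\dim_K K_0(\cA)_K/_{\!\sim\mathrm{hom}}$, hence $\dim_\bbQ K_0(\cA)_\bbQ/_{\!\sim\mathrm{num}}$). The paper sidesteps this entirely: in Proposition~\ref{prop:properties} it shows that $\mathrm{ST}_{\mathrm{nc}}$ is stable under direct summands by first rewriting it, via Theorem~\ref{thm:alternative}, as $\mathrm{T}^p_{\mathrm{nc}}+\mathrm{D}_{\mathrm{nc}}$, each of which is transparently stable under summands. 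Either route closes the gap; as written, your argument proves only the direction $\mathrm{ST}(X)+\mathrm{ST}(Y)\Rightarrow\mathrm{ST}_{\mathrm{nc}}(X\ominus_\mathrm{B} Y)$.
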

Intuitively speaking, Theorem \ref{thm:glue} shows that the noncommutative Weil conjecture and the noncommutative strong form of the Tate conjecture are ``additive'' with respect to gluings. This implies, in particular, that the noncommutative Weil conjecture(s) hold(s) for every noncommutative gluing and that the noncommutative strong form of the Tate conjecture holds for every noncommutative gluing of curves.

\subsection*{Root stacks}
Let $X$ be a smooth proper $k$-scheme, $\cL$ a line bundle on $X$, $\varsigma \in \Gamma(X,\cL)$ a global section, and $n\geq 1$ an integer. Following Cadman \cite[Def.~2.2.1]{Cadman}, the associated {\em root stack} is defined as the following fiber-product
$$
\xymatrix{
\cX:=\sqrt[n]{(\cL,\varsigma)/X} \ar[d]_-f \ar[r]& [\bbA^1/\bbG_m] \ar[d]^-{\theta_n} \\
X \ar[r]_-{(\cL,\varsigma)} & [\bbA^1/\bbG_m]\,,
}
$$
where $\theta_n$ stands for the morphism induced by the $n^{\mathrm{th}}$ power map on $\bbA^1$ and $\bbG_m$.
\begin{theorem}\label{thm:root}
Assume that the zero locus $D \hookrightarrow X$ of $\varsigma$ is smooth. Under this assumption, we have the following equivalences of conjectures:
\begin{eqnarray*}
\mathrm{W}(X)+ \mathrm{W}(D) \Leftrightarrow\mathrm{W}_{\mathrm{nc}}(\perf_\dg(\cX))   &&  \mathrm{ST}(X)+ \mathrm{ST}(D) \Leftrightarrow \mathrm{ST}_{\mathrm{nc}}(\perf_\dg(\cX))\,.
\end{eqnarray*}
Moreover, the conjecture $\mathrm{W}^l_{\mathrm{nc}}(\perf_\dg(\cX))$ holds.
\end{theorem}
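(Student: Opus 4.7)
The plan is to reduce everything to the analogous statements for the smooth proper schemes $X$ and $D$ by exhibiting a semi-orthogonal decomposition of $\perf_\dg(\cX)$ whose components are copies of $\perf_\dg(X)$ and $\perf_\dg(D)$, and then invoking additivity of the relevant invariants (topological periodic cyclic homology, numerical Grothendieck group) under such decompositions.

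First, I would recall the semi-orthogonal decomposition for root stacks: under the hypothesis that $D\hookrightarrow X$ is smooth, the results of Ishii--Ueda (and, in greater generality, of Bergh--Lunts--Schn{\"u}rer) yield a semi-orthogonal decomposition
\[
\perf_\dg(\cX) = \langle \perf_\dg(D),\, \perf_\dg(D),\, \ldots,\, \perf_\dg(D),\, \perf_\dg(X)\rangle,
\]
with $n-1$ copies of $\perf_\dg(D)$ and one copy of $\perf_\dg(X)$. Topological periodic cyclic homology $TP_\ast(-)_{1/p}$ is an additive invariant of smooth proper dg categories, so this decomposition induces an isomorphism
\[
TP_\ast(\perf_\dg(\cX))_{1/p} \simeq TP_\ast(\perf_\dg(X))_{1/p} \oplus TP_\ast(\perf_\dg(D))_{1/p}^{\oplus (n-1)},
\]
which is compatible with the cyclotomic Frobenius on both sides (this compatibility is the key functoriality statement to verify, using that the inclusion dg functors of a semi-orthogonal decomposition induce the splitting as maps of cyclotomic spectra up to inverting $p$).

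With this splitting in hand, the multi-set of eigenvalues of $\mathrm{F}_0$ (resp.\ $\mathrm{F}_1$) on $TP_\ast(\perf_\dg(\cX))_{1/p}$ is the disjoint union of the eigenvalues coming from $X$ and $n-1$ copies of those coming from $D$. Consequently, $\mathrm{W}_{\mathrm{nc}}(\perf_\dg(\cX))$ is equivalent to $\mathrm{W}_{\mathrm{nc}}(\perf_\dg(X)) + \mathrm{W}_{\mathrm{nc}}(\perf_\dg(D))$, and Theorem \ref{thm:main} applied to $X$ and to $D$ finishes the first equivalence. The $l$-adic statement is settled by checking that the integrality-after-multiplication-by-$q^C$ assumption of Theorem \ref{thm:new} holds for $\perf_\dg(\cX)$; by the splitting above it suffices to verify this assumption for the two components $\perf_\dg(X)$ and $\perf_\dg(D)$, which is precisely the content of Remark \ref{rk:new}.

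For the strong form of the Tate conjecture, I would use additivity of the numerical Grothendieck group under semi-orthogonal decompositions, which gives
\[
K_0(\perf_\dg(\cX))_\bbQ/_{\!\sim\mathrm{num}} \simeq K_0(\perf_\dg(X))_\bbQ/_{\!\sim\mathrm{num}} \oplus \left(K_0(\perf_\dg(D))_\bbQ/_{\!\sim\mathrm{num}}\right)^{\oplus (n-1)}.
\]
Combined with Remark \ref{rk:multiplicity} (reformulating $\mathrm{ord}_{s=0}\zeta_{\mathrm{even}}$ as the algebraic multiplicity of the eigenvalue $1$ of $\mathrm{F}_0$) and the additivity of $TP_0$ under the semi-orthogonal decomposition, this reduces $\mathrm{ST}_{\mathrm{nc}}(\perf_\dg(\cX))$ to $\mathrm{ST}_{\mathrm{nc}}(\perf_\dg(X)) + \mathrm{ST}_{\mathrm{nc}}(\perf_\dg(D))$; Theorem \ref{thm:strong} applied to $X$ and to $D$ then gives the claimed equivalence. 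The main technical obstacle in this plan is checking that the semi-orthogonal decomposition really does split the cyclotomic Frobenius (and not just $TP_\ast$ as a $K$-vector space); this is a functoriality statement about the cyclotomic structure that must be handled carefully, but it follows from the construction of $\mathrm{F}_0, \mathrm{F}_1$ via the cyclotomic trace and the fact that the components of any semi-orthogonal decomposition embed as admissible dg subcategories.
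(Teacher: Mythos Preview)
Your approach is essentially the same as the paper's: both rest on the Ishii--Ueda semi-orthogonal decomposition of $\perf(\cX)$ into one copy of $\perf(X)$ and $n-1$ copies of $\perf(D)$, followed by additivity and Theorems~\ref{thm:main}, \ref{thm:new} (with Remark~\ref{rk:new}), and \ref{thm:strong}.

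The packaging differs in a way worth noting. The paper works at the level of noncommutative Chow motives: the semi-orthogonal decomposition yields $U(\perf_\dg(\cX))_\bbQ \simeq U(\perf_\dg(X))_\bbQ \oplus U(\perf_\dg(D))_\bbQ^{\oplus(n-1)}$ in $\NChow(k)_\bbQ$, and then Example~\ref{ex:semi}/Corollary~\ref{cor:sums} are applied. This buys two things you are handling by hand. First, compatibility of the splitting with the cyclotomic Frobenius is automatic, because $\mathrm{F}_\ast$ is constructed in \S\ref{sub:cyclotomic}--\S\ref{sub:enriched} as a natural transformation of functors on $\NChow(k)_\bbQ$; your ``main technical obstacle'' simply evaporates. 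Second, the equivalence $\mathrm{ST}_{\mathrm{nc}}(\cX)\Leftrightarrow \mathrm{ST}_{\mathrm{nc}}(X)+\mathrm{ST}_{\mathrm{nc}}(D)$ requires stability of $\mathrm{ST}_{\mathrm{nc}}$ under \emph{direct summands}, not just direct sums. Your additivity argument gives the implication $\mathrm{ST}_{\mathrm{nc}}(X)+\mathrm{ST}_{\mathrm{nc}}(D)\Rightarrow\mathrm{ST}_{\mathrm{nc}}(\cX)$, but the converse does not follow from the bare equalities of dimensions and multiplicities: an equation $a_X+(n-1)a_D=b_X+(n-1)b_D$ does not force $a_X=b_X$ and $a_D=b_D$. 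The paper secures this direction via Proposition~\ref{prop:properties}, which in turn uses Theorem~\ref{thm:alternative} ($\mathrm{ST}_{\mathrm{nc}}\Leftrightarrow \mathrm{T}^p_{\mathrm{nc}}+\mathrm{D}_{\mathrm{nc}}$), the latter two conjectures being manifestly stable under summands. Alternatively, you could close the gap directly by invoking the a priori inequality $\dim_\bbQ K_0(-)_\bbQ/_{\!\sim\mathrm{num}}\leq (\text{algebraic multiplicity of }1\text{ for }\mathrm{F}_0)$, valid for every smooth proper dg category (see the proof of Theorem~\ref{thm:alternative}), which forces termwise equality once the sums agree.
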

Theorem \ref{thm:root} implies that the noncommutative Weil conjecture(s) hold(s) for every root stack and that the noncommutative strong form of the Tate conjecture holds, for example, for all those root stacks whose underlying scheme is a curve.

\subsection*{Global orbifolds}
Let $G$ be a finite group of order $n$, $X$ a smooth proper $k$-scheme equipped with a $G$-action, and $\cX:=[X/G]$ the associated global orbifold.
\begin{theorem}\label{thm:orbifolds}
Assume that $p \nmid n$ ($\Leftrightarrow 1/n \in k$). Under this assumption, we have the following implications of conjectures ($\sigma$ is a cyclic subgroup of $G$):
\begin{eqnarray}
\sum_{\sigma \subseteq G} \mathrm{W}(X^\sigma \times \mathrm{Spec}(k[\sigma])) & \Rightarrow & \mathrm{W}_{\mathrm{nc}}(\perf_\dg(\cX)) \label{eq:implication1} \\
 \sum_{\sigma \subseteq G} \mathrm{ST}(X^\sigma \times \mathrm{Spec}(k[\sigma])) & \Rightarrow & \mathrm{ST}_{\mathrm{nc}}(\perf_\dg(\cX)) \label{eq:implication2}\,.
\end{eqnarray} 
Under the stronger assumption $n|(q-1)$ ($\Leftrightarrow$ $k$ contains the $n^{\mathrm{th}}$ roots of unity ), the same implications hold with $X^\sigma \times \mathrm{Spec}(k[\sigma])$ replaced by $X^\sigma$. Moreover, the conjecture $\mathrm{W}^l_{\mathrm{nc}}(\perf_\dg(\cX))$ holds.
\end{theorem}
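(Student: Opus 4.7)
The plan is to reduce the theorem, via a noncommutative motivic decomposition of $\perf_\dg(\cX)$, to the corresponding conjectures for the smooth proper $k$-schemes $X^\sigma \times \Spec(k[\sigma])$, and then to invoke Theorems \ref{thm:main}, \ref{thm:new} and \ref{thm:strong} term by term.

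First, I would recall the decomposition of the noncommutative motive of a global orbifold established in the author's prior work on noncommutative motives of quotient stacks: under the hypothesis $p\nmid n$, the noncommutative motive $U(\perf_\dg(\cX))$ becomes, after inverting $n$, a direct summand of a finite direct sum of noncommutative motives of the smooth proper $k$-schemes $X^\sigma \times \Spec(k[\sigma])$, indexed by the conjugacy classes $[\sigma]$ of cyclic subgroups $\sigma \subseteq G$. Under the stronger hypothesis $n\mid (q-1)$, the \'etale $k$-algebra $k[\sigma]$ splits as a product of copies of $k$, so $\Spec(k[\sigma])$ reduces to a disjoint union of copies of $\Spec(k)$ and the factor $\Spec(k[\sigma])$ may be dropped from the decomposition.

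Second, since topological periodic cyclic homology (together with its cyclotomic Frobenius) descends to the category of noncommutative motives, the above decomposition yields $\mathrm{F}_\ast$\nobreakdash-equivariant embeddings of $TP_\ast(\perf_\dg(\cX))_{1/p}$ as a direct summand of $\bigoplus_{[\sigma]} TP_\ast(\perf_\dg(X^\sigma \times \Spec(k[\sigma])))_{1/p}$. Consequently, the multiset of eigenvalues of $\mathrm{F}_0$ and $\mathrm{F}_1$ on the left is contained in the union of the corresponding multisets on the right, and by Remark \ref{rk:multiplicity} the algebraic multiplicity of the eigenvalue $1$ is additive along the splitting. This reduces both $\mathrm{W}_{\mathrm{nc}}$ and $\mathrm{ST}_{\mathrm{nc}}$ for $\perf_\dg(\cX)$ to the analogous noncommutative conjectures for each $\perf_\dg(X^\sigma \times \Spec(k[\sigma]))$. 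Each such $Y:=X^\sigma \times \Spec(k[\sigma])$ is a smooth proper $k$-scheme (smooth because $p\nmid|\sigma|$ forces $X^\sigma$ smooth and $\Spec(k[\sigma])$ is \'etale over $k$; proper because both factors are), so Theorems \ref{thm:main} and \ref{thm:strong} convert $\mathrm{W}_{\mathrm{nc}}(\perf_\dg(Y))$ and $\mathrm{ST}_{\mathrm{nc}}(\perf_\dg(Y))$ into $\mathrm{W}(Y)$ and $\mathrm{ST}(Y)$, yielding the implications \eqref{eq:implication1} and \eqref{eq:implication2}. Under $n\mid(q-1)$ the same argument applies with $X^\sigma$ in place of $Y$. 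For $\mathrm{W}^l_{\mathrm{nc}}(\perf_\dg(\cX))$, it suffices to verify the hypothesis of Theorem \ref{thm:new}: each scheme-theoretic factor satisfies it by Remark \ref{rk:new}, and the finitely many integers $C$ can be combined into a single one.

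The main obstacle will be showing that the above motivic decomposition is compatible with the cyclotomic Frobenius on $TP$, rather than merely splitting the underlying $K$-vector spaces. I would handle this by checking that the equivalence in the noncommutative motivic category is realized by $k$-linear Fourier--Mukai-type bimodules (as in the construction of the decomposition itself), and that any such bimodule induces an $\mathrm{F}_\ast$-equivariant map on $TP_\ast(-)_{1/p}$ by functoriality of the cyclotomic Frobenius on the category of noncommutative motives.
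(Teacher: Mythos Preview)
Your approach is essentially the same as the paper's: invoke the orbifold decomposition of \cite{Orbifold} to realize $U(\perf_\dg(\cX))_\bbQ$ as a direct summand of $\bigoplus_{\sigma} U(\perf_\dg(X^\sigma\times\Spec(k[\sigma])))_\bbQ$ in $\NChow(k)_\bbQ$, then pass to the summand and apply Theorems~\ref{thm:main}, \ref{thm:new}, \ref{thm:strong}. Your ``main obstacle'' is a non-issue: since the decomposition takes place in $\NChow(k)_\bbQ$ and the enriched functor \eqref{eq:enriched} is defined on that category, compatibility with $\mathrm{F}_\ast$ is automatic.

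There is, however, a genuine gap in your treatment of $\mathrm{ST}_{\mathrm{nc}}$. You argue only that the algebraic multiplicity of the eigenvalue $1$ is additive along the splitting, but $\mathrm{ST}_{\mathrm{nc}}$ is an \emph{equality} between that multiplicity and $\dim_\bbQ K_0(-)_\bbQ/_{\!\sim\mathrm{num}}$. Knowing the equality for the direct sum does not by itself give the equality for a summand: you would need either an a priori inequality (e.g.\ $\dim_\bbQ K_0(N\!M)_\bbQ/_{\!\sim\mathrm{num}}\leq$ alg.\ mult.\ of $1$, which does hold via $K_0/_{\!\sim\mathrm{num}}\to K_0/_{\!\sim\mathrm{hom}}\hookrightarrow TP_0^{\mathrm{F}_0}$) or, as the paper does in Proposition~\ref{prop:properties}, the equivalence $\mathrm{ST}_{\mathrm{nc}}\Leftrightarrow \mathrm{T}^p_{\mathrm{nc}}+\mathrm{D}_{\mathrm{nc}}$ from Theorem~\ref{thm:alternative}, both of which are manifestly stable under summands. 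Either fix closes the gap.
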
 
Theorem \ref{thm:orbifolds} implies that the noncommutative Weil conjecture(s) hold(s) for every global orbifold. Since $\mathrm{Spec}(k[\sigma])$ is $0$-dimensional, it implies moreover that the noncommutative strong form of the Tate conjecture holds, for example, for all those global orbifolds whose underlying scheme is a curve.
\begin{remark}[McKay correspondence]
A famous conjecture of Reid asserts that the category $\perf(\cX)$ is (Fourier-Mukai) equivalent to the category of perfect complexes of  a(ny) crepant resolution $Y$ of the (singular) geometric quotient $X/\!\!/G$. Whenever this holds, the right-hand sides of  \eqref{eq:implication1}-\eqref{eq:implication2} may be replaced by the conjectures $\mathrm{W}(Y)$ and $\mathrm{ST}(Y)$, respectively. Reid's conjecture has been proved in several cases; consult, for example, the work of Bezrukavnikov and Kaledin \cite{BK}, Bridgeland, King and Reid \cite{BKR}, Kapranov and Vasserot \cite{KV}, and Kawamata \cite{Kawamata}.
\end{remark}
\subsection*{Twisted global orbifolds}
Let $G$ be a finite group of order $n$, $X$ a smooth proper $k$-scheme equipped with a $G$-action, $\cX:=[X/G]$ the associated global orbifold, and $\cF$ a sheaf of Azumaya algebras\footnote{Equivalently, $\cF$ is a $G$-equivariant sheaf of Azumaya algebras over $X$.} over $\cX$. Similarly to $\perf_\dg(\cX)$, we can also consider the dg category $\perf_\dg(\cX;\cF)$ of perfect complexes of $\cF$-modules.
\begin{theorem}\label{thm:twist}
Assume that $n|(q-1)$. Under this assumption, we have the following implications of conjectures
\begin{eqnarray*}
\sum_{\sigma \subseteq G} \mathrm{W}(Y_\sigma) \Rightarrow \mathrm{W}_{\mathrm{nc}}(\perf_\dg(\cX;\cF)) && \sum_{\sigma \subseteq G} \mathrm{ST}(Y_\sigma) \Rightarrow \mathrm{ST}_{\mathrm{nc}}(\perf_\dg(\cX;\cF)) \,,
\end{eqnarray*} 
where $Y_\sigma$ is a certain $\sigma^\vee$-Galois cover of $X^\sigma$ induced by the restriction of $\cF$ to $X^\sigma$. Moreover, the conjecture $\mathrm{W}^l_{\mathrm{nc}}(\perf_\dg(\cX;\cF))$ holds.
\end{theorem}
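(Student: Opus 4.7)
The plan is to combine the techniques behind Theorem \ref{thm:twist-new} (twisted schemes) and Theorem \ref{thm:orbifolds} (global orbifolds) by establishing a decomposition of $\perf_\dg(\cX;\cF)$ into pieces indexed by the conjugacy classes of cyclic subgroups $\sigma \subseteq G$, each piece being Morita-equivalent to (a sum of copies of) $\perf_\dg(Y_\sigma)$. Once such a decomposition is in place, additivity of the invariants involved does all the remaining work.

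First, I would generalize the decomposition theorem for global orbifolds used in the proof of Theorem \ref{thm:orbifolds} to the twisted setting. Under the hypothesis $n\mid (q-1)$, the field $k$ contains the $n^{\mathrm{th}}$ roots of unity, so for every cyclic subgroup $\sigma \subseteq G$ the group algebra $k[\sigma]$ splits as a product $\prod_{\chi \in \sigma^\vee} k$. Restricting $\cF$ to the fixed-point locus $X^\sigma$ yields a $\sigma$-equivariant sheaf of Azumaya algebras, which, after diagonalizing the $\sigma$-action using the characters $\chi \in \sigma^\vee$, decomposes into isotypic Azumaya components on $X^\sigma$. The obstruction to splitting these components is a class in $H^2(X^\sigma,\mu_n)$ which, by construction, is trivialized precisely on the $\sigma^\vee$-Galois cover $Y_\sigma \to X^\sigma$ introduced in the statement. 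This produces the desired Morita (in fact, dg-Morita) equivalence
$$
\perf_\dg(\cX;\cF) \;\simeq\; \bigoplus_{\sigma \subseteq G}\; \perf_\dg(Y_\sigma)^{\oplus m_\sigma}
$$
(sum over representatives of conjugacy classes of cyclic subgroups, with appropriate multiplicities $m_\sigma$), refining the familiar orbifold decomposition by incorporating the Brauer twist.

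Next, I would invoke the additivity of topological periodic cyclic homology: $TP_\ast(-)_{1/p}$ sends finite direct sums of dg categories to direct sums, and the cyclotomic Frobenius is natural with respect to this splitting. Consequently, the automorphisms $\mathrm{F}_0,\mathrm{F}_1$ on $TP_\ast(\perf_\dg(\cX;\cF))_{1/p}$ split as direct sums of the corresponding Frobenii on each $TP_\ast(\perf_\dg(Y_\sigma))_{1/p}$, and the eigenvalues (counted with algebraic multiplicity) are the disjoint union of the eigenvalues on the individual pieces. The same additivity statement holds for the numerical Grothendieck group $K_0(-)_\bbQ/_{\!\sim\mathrm{num}}$. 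Applying Theorem \ref{thm:main} to each commutative piece $\perf_\dg(Y_\sigma)$ reduces $\mathrm{W}_{\mathrm{nc}}(\perf_\dg(\cX;\cF))$ to $\sum_\sigma \mathrm{W}(Y_\sigma)$, and applying Theorem \ref{thm:strong} analogously yields the implication for $\mathrm{ST}_{\mathrm{nc}}$. For $\mathrm{W}^l_{\mathrm{nc}}$, the eigenvalues on each $Y_\sigma$-summand are already known (unconditionally) to be algebraic with $l$-adic conjugates of absolute value $1$, so the decomposition gives the conclusion for $\perf_\dg(\cX;\cF)$ directly (using the integrality remark invoked in the proof of Theorem \ref{thm:new}).

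The main obstacle will be establishing the decomposition of the first paragraph in a form precise enough to control the Azumaya twist on each stratum. One must verify that, after diagonalizing the $\sigma$-action via characters, the Brauer class on $X^\sigma$ cut out by the $\chi$-isotypic component of $\cF\vert_{X^\sigma}$ is exactly the one trivialized by $Y_\sigma\to X^\sigma$, and that this identification is compatible with the cyclotomic Frobenius. The hypothesis $n\mid (q-1)$ is what makes this diagonalization possible over $k$ itself (rather than only over an extension), and hence what makes the resulting Galois covers $Y_\sigma$ defined over $k$; without it, one would at best obtain the weaker statement with $X^\sigma\times\Spec(k[\sigma])$ in place of $Y_\sigma$, by analogy with the unrefined case of Theorem \ref{thm:orbifolds}.
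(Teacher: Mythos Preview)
Your overall strategy---decompose into pieces indexed by cyclic subgroups, then reduce to the commutative schemes $Y_\sigma$ via Theorems \ref{thm:main}, \ref{thm:new}, \ref{thm:strong}---is exactly the paper's approach. The paper simply says: run the proof of Theorem \ref{thm:orbifolds} (in the case $n\mid(q-1)$) with $\perf_\dg(\cX)$ replaced by $\perf_\dg(\cX;\cF)$, $X^\sigma$ replaced by $Y_\sigma$, and the reference \cite[Cor.~1.5(ii)]{Orbifold} replaced by \cite[Cor.~1.28(ii)]{Orbifold}.

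There is, however, a real overclaim in your first paragraph. You assert a dg-Morita equivalence
\[
\perf_\dg(\cX;\cF)\;\simeq\;\bigoplus_{\sigma}\perf_\dg(Y_\sigma)^{\oplus m_\sigma},
\]
but this is stronger than what is actually available (and stronger than what the paper uses). The cited result from \cite{Orbifold} only shows that the noncommutative Chow motive $U(\perf_\dg(\cX;\cF))_\bbQ$ is a \emph{direct summand} of $\bigoplus_\sigma U(\perf_\dg(Y_\sigma))_\bbQ$ in $\NChow(k)_\bbQ$; there is no splitting at the dg level, and no claim that the summand is the whole thing. This is why the paper needs Proposition \ref{prop:properties} (stability of the conjectures under direct \emph{summands} of noncommutative Chow motives), not merely additivity under direct sums. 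Your second paragraph implicitly assumes the full decomposition when you match eigenvalues and numerical $K_0$ piece by piece; with only a retract in $\NChow(k)_\bbQ$, you instead argue that the eigenvalues of $\mathrm{F}_0,\mathrm{F}_1$ on $\perf_\dg(\cX;\cF)$ form a subset of those on the sum, and likewise for the rank of $K_0(-)_\bbQ/_{\!\sim\mathrm{num}}$. Once you weaken the decomposition claim to the motivic direct-summand statement and invoke Proposition \ref{prop:properties}, your argument coincides with the paper's.
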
 
Similarly to Theorem \ref{thm:orbifolds}, Theorem \ref{thm:twist} implies that the noncommutative Weil conjecture(s) hold(s) for every twisted global orbifold. It implies moreover that the noncommutative strong form of the Tate conjecture holds for all those twisted global orbifolds whose underlying scheme is a curve.

\subsection*{Connective dg algebras}
Let $A$ be a smooth proper {\em connective} dg $k$-algebra, i.e., $\dgHi(A)=0$ for every $i>0$. Via the Dold-Kan correspondence, it corresponds to a smooth proper simplicial $k$-algebra. The next result proves the noncommutative Weil conjecture(s) and the noncommutative strong form of the Tate conjecture for this (large) class of dg algebras: 
\begin{theorem}\label{thm:dg}
The conjectures $\mathrm{W}_{\mathrm{nc}}(A)$, $\mathrm{W}^l_{\mathrm{nc}}(A)$, and $ \mathrm{ST}_{\mathrm{nc}}(A)$, hold.
\end{theorem}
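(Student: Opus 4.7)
The plan is to reduce Theorem~\ref{thm:dg} to the case of a smooth proper \emph{finite-dimensional} dg $k$-algebra, a case that is treated separately in the paper (as advertised in the abstract). From there, one can descend further, via the Jacobson radical and Wedderburn's theorem, all the way to a $0$-dimensional smooth proper $k$-scheme, for which the commutative Weil and strong Tate conjectures are trivially true and can be transported back by Theorems~\ref{thm:main}, \ref{thm:new} and~\ref{thm:strong}.

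First, smoothness and properness of $A$ imply that $A$ is perfect as a $k$-module, so the total cohomology $\bigoplus_{i\in \mathbb{Z}} H^i(A)$ is a finite-dimensional $k$-vector space. Combined with the connectivity assumption $H^i(A)=0$ for $i>0$, this forces only finitely many of the $H^{-i}(A)$ to be non-zero, each finite-dimensional over $k$. A standard minimal-model argument for dg algebras over a field then yields a zig-zag of quasi-isomorphisms relating $A$ to a genuinely finite-dimensional dg $k$-algebra $B$ concentrated in finitely many non-positive degrees. Such a zig-zag is a derived Morita equivalence, so it induces an equivalence $\perf_\dg(A)\simeq \perf_\dg(B)$ and, in particular, an isomorphism $TP_\ast(A)_{1/p}\cong TP_\ast(B)_{1/p}$ compatible with the cyclotomic Frobenius, together with $K_0(A)_\bbQ/_{\!\sim\num}\cong K_0(B)_\bbQ/_{\!\sim\num}$. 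Consequently, each of the conjectures $\mathrm{W}_{\mathrm{nc}}$, $\mathrm{W}^l_{\mathrm{nc}}$ and $\mathrm{ST}_{\mathrm{nc}}$ for $A$ is equivalent to the corresponding conjecture for $B$, and we invoke the finite-dimensional case to conclude.

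If one does not wish to invoke the finite-dimensional case as a black box, one can continue the reduction: let $J$ be the Jacobson radical of $H^0(B)$ and pass through the Postnikov tower of $B$ and then through $H^0(B)\twoheadrightarrow H^0(B)/J$. Each stage is a nilpotent (in particular square-zero) extension of dg algebras, and a Goodwillie-type rigidity for $TP$ with its cyclotomic Frobenius together with the classical nilpotent invariance of the numerical Grothendieck group shows that $B$ carries the same relevant data as the semisimple quotient $H^0(B)/J$. By Artin--Wedderburn and the commutativity of finite division rings, $H^0(B)/J \cong \prod_i M_{n_i}(k_i)$ for finite field extensions $k_i/k$; Morita equivalence collapses this to $\prod_i k_i=\cO_X$ for the $0$-dimensional smooth proper $k$-scheme $X:=\coprod_i \Spec(k_i)$. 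Theorems~\ref{thm:main} and~\ref{thm:strong} then transfer the trivial conjectures $\mathrm{W}(X)$ and $\mathrm{ST}(X)$ to $\mathrm{W}_{\mathrm{nc}}(\prod_i k_i)$ and $\mathrm{ST}_{\mathrm{nc}}(\prod_i k_i)$, while the relevant eigenvalues are all roots of unity, so the integrality hypothesis of Theorem~\ref{thm:new} holds and yields $\mathrm{W}^l_{\mathrm{nc}}$.

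The main obstacle is the reduction step to a genuinely finite-dimensional model: one must guarantee that a smooth proper connective dg $k$-algebra is quasi-isomorphic, via a zig-zag, not merely to a cohomologically finite-dimensional object but to a dg algebra whose underlying graded $k$-module is finite-dimensional in each degree, and that the replacement is compatible with the structures used to formulate the three conjectures (most notably, the cyclotomic Frobenius on $TP$). If one instead runs the more explicit Postnikov/radical reduction, the hard point migrates to establishing the required nilpotent invariance of $TP$ together with its cyclotomic Frobenius in characteristic $p$, which is the topological refinement of the classical Goodwillie rigidity theorem for periodic cyclic homology.
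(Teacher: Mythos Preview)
Your approach is broadly correct but takes a more laborious route than the paper. The paper neither reduces to Theorem~\ref{thm:finite} nor attempts any Goodwillie-type rigidity for $TP$. Instead it invokes a single result of Raedschelders--Stevenson \cite[Thm.~3.5]{RS}, which furnishes directly an isomorphism $U(A)_\bbQ \simeq U(\dgHo(A)/\mathrm{Jac}(\dgHo(A)))_\bbQ$ in $\NChow(k)_\bbQ$; Wedderburn together with the commutativity of finite division rings then gives $U(A)_\bbQ \simeq \bigoplus_i U(\kappa_i)_\bbQ$ for finite extensions $\kappa_i/k$, and the additivity of the three conjectures under direct sums (Corollary~\ref{cor:sums}) finishes the argument. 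The point is that both $TP_\ast(-)_{1/p}$ with its cyclotomic Frobenius and the numerical Grothendieck group factor through $\NChow(k)_\bbQ$ (see \S\ref{sub:enriched} and \eqref{eq:numerical}), so working motivically makes the compatibility with $\mathrm{F}_\ast$ automatic and sidesteps both the minimal-model replacement and any nilpotent-invariance statement for $TP$ in characteristic~$p$. Your first route (reduce to a finite-dimensional model and invoke Theorem~\ref{thm:finite}) is logically sound but circuitous, and the minimal-model step, while plausible, deserves a precise reference; your second route, as you yourself flag, would require a Frobenius-equivariant nilpotent invariance of $TP$ that the paper does not establish and that is genuinely delicate.
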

\subsection*{Finite-dimensional dg algebras}
Let $A$ be a smooth {\em finite-dimensional} dg $k$-algebra, i.e., $\mathrm{dim}_k(A^i)<\infty$ for every $i \in \mathbb{Z}$. The next result proves the noncommutative Weil conjecture(s) and the noncommutative strong form of the Tate conjecture for this (large) class of dg algebras: 
\begin{theorem}\label{thm:finite}
The conjectures $\mathrm{W}_{\mathrm{nc}}(A)$, $\mathrm{W}^l_{\mathrm{nc}}(A)$, and $ \mathrm{ST}_{\mathrm{nc}}(A)$, hold.
\end{theorem}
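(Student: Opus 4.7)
The plan is to reduce Theorem \ref{thm:finite} to Theorem \ref{thm:dg} (the connective case) by presenting $A$ as an iterated Orlov gluing whose pieces are Morita equivalent to connective dg $k$-algebras.

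First I would set up the dg Jacobson radical. Since $A$ is smooth proper and per-degree finite-dimensional, the total cohomology $H^\ast(A)$ is bounded and finite-dimensional, and there are only finitely many simple dg modules over $A$ up to quasi-equivalence. Define $J \subset A$ to be the intersection of their annihilators. A Nakayama-type argument, using the per-degree finite-dimensionality together with the boundedness of $H^\ast(A)$, shows that $J$ is a nilpotent dg ideal, yielding a finite filtration
\[
A \supseteq J \supseteq J^2 \supseteq \cdots \supseteq J^N = 0
\]
in dg ideals, with quasi-semisimple quotient.

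Next, $A/J$ is Morita equivalent to a finite product $\prod_i D_i$ of finite-dimensional division $k$-algebras concentrated in degree $0$; in particular, $A/J$ is connective in the sense of Theorem \ref{thm:dg}, so that theorem applies and gives the three conjectures $\mathrm{W}_{\mathrm{nc}}(A/J)$, $\mathrm{W}^l_{\mathrm{nc}}(A/J)$, $\mathrm{ST}_{\mathrm{nc}}(A/J)$. The filtration then realises $A$ as an iterated Orlov gluing of copies of $A/J$ with perfect dg $A/J$-bimodule gluing data $J^i/J^{i+1}$, and each successive gluing produces a smooth proper dg $k$-algebra.

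Finally, I would invoke the dg-category analogue of Theorem \ref{thm:glue} — established by the same additivity principle used there, namely that topological periodic cyclic homology (together with its cyclotomic Frobenius) and the numerical Grothendieck group both decompose additively under semi-orthogonal decompositions of smooth proper dg categories. This propagates $\mathrm{W}_{\mathrm{nc}}$, $\mathrm{W}^l_{\mathrm{nc}}$, and $\mathrm{ST}_{\mathrm{nc}}$ through the finitely many gluing steps from $A/J$ up to $A$.

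The main obstacle is the construction and nilpotency of the dg Jacobson radical $J$: while the finite-dimensional intuition suggests this is routine, the dg setting requires showing that the intersection of the annihilators of simple dg modules is genuinely a dg ideal and, crucially, nilpotent, which uses in an essential way the per-degree finite-dimensionality of $A$ together with the boundedness and finite-dimensionality of $H^\ast(A)$ provided by the smooth proper hypothesis.
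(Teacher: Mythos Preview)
Your central claim --- that the radical filtration $A \supseteq J \supseteq J^2 \supseteq \cdots \supseteq J^N = 0$ realises $A$ as an iterated Orlov gluing of copies of $A/J$ with gluing bimodules $J^i/J^{i+1}$ --- is false as stated. The Orlov gluing $(A/J)\ominus_M(A/J)$ is the upper-triangular dg algebra $\begin{pmatrix} A/J & M \\ 0 & A/J\end{pmatrix}$, whose underlying complex is $(A/J)\oplus M \oplus (A/J)$; already for $J^2=0$ this has the wrong size to recover $A$. Concretely, for the path algebra of $\bullet\to\bullet$ one has $A/J\simeq k\times k$ and $J\simeq k$, and $\perf(A)$ is a gluing of \emph{two copies of $\perf(k)$}, not of two copies of $\perf(A/J)$. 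In general the radical filtration of $A$ does not produce a semi-orthogonal decomposition of $\perf(A)$; it produces one for a \emph{larger} category into which $\perf(A)$ embeds.

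This is exactly what the paper exploits. Following Orlov (reference \cite{Orlov1}), one forms the Auslander dg algebra $\mathrm{Aus}(A)$ from the radical filtration; it is this algebra, not $A$, that is the iterated gluing, and Orlov proves both that $\perf(\mathrm{Aus}(A))=\langle \perf(D_1),\ldots,\perf(D_n)\rangle$ with each $D_i$ a finite field extension of $k$, and that $\perf(A)$ sits inside $\perf(\mathrm{Aus}(A))$ as an admissible subcategory. One then finishes not with the gluing Theorem~\ref{thm:glue} but with Proposition~\ref{prop:properties}: $U(A)_\bbQ$ is a direct \emph{summand} of $\bigoplus_i U(\kappa_i)_\bbQ$, and the conjectures are stable under summands. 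Your ``dg Jacobson radical'' worries are real but secondary; even granting a well-behaved nilpotent dg ideal $J$, the gluing step does not go through, and you need the summand argument instead.
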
 

\section{Applications to commutative geometry}\label{sec:applications2}
Let $k=\bbF_q$ be a finite field of characteristic $p$. As mentioned in \S\ref{sec:intro}, both the Weil conjecture as well as the strong form of the Tate conjecture hold for curves (recall that Weil proved his famous conjecture for curves using solely the classical intersection theory of divisors on surfaces). In this section, making use of Theorems \ref{thm:main} and \ref{thm:strong}, we bootstrap these results from curves to intersections of two quadrics and to linear sections of determinantal varieties. This yields an alternative noncommutative proof of the Weil conjecture for all these (higher dimensional) schemes, which avoids the involved tools used by Deligne. It yields moreover a proof of the strong form of the Tate conjecture in new cases.
\subsection*{Intersections of two quadrics}
Let $X \subset \bbP^{n-1}$ be a smooth complete intersection of two quadric hypersurfaces, with $n\geq 4$. The linear span of these two quadrics gives rise to an hypersurface $Q \subset \bbP^1 \times \bbP^{n-1}$, and the projection onto the first factor gives rise to a flat quadric fibration $f\colon Q \to \bbP^1$ of relative dimension $n-2$. 
\begin{theorem}\label{thm:quadrics}
Assume that all the fibers of $f$ have corank $\leq 1$. Under this assumption, the following holds:
\begin{itemize}
\item[(i)] When $n$ is even, the conjectures $\mathrm{W}(X)$ and $\mathrm{ST}(X)$ hold. 
\item[(ii)] When $n$ is odd and $p\neq 2$, the conjectures $\mathrm{W}(X)$ and $\mathrm{ST}(X)$ hold. 
\end{itemize}
\end{theorem}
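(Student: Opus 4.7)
The plan is to bootstrap $\mathrm{W}(X)$ and $\mathrm{ST}(X)$ from the case of curves via the noncommutative machinery of Section~\ref{sec:applications1}. When $n=4$, $X$ is itself a smooth proper curve and both conjectures are classical, so assume $n\geq 5$. The starting point is then Kuznetsov's theorem on derived categories of quadric fibrations applied to the pencil $f\colon Q\to\bbP^1$, giving a semi-orthogonal decomposition
\[
\perf_\dg(X) \,=\, \langle\,\perf_\dg(\bbP^1;\cB_0),\,\cO_X,\,\cO_X(1),\,\ldots,\,\cO_X(n-5)\,\rangle,
\]
where $\cB_0$ is the sheaf of even parts of the Clifford algebra of the pencil and each exceptional object generates a copy of $\perf_\dg(\Spec k)$.

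Under the corank $\leq 1$ hypothesis (and $p\neq 2$ when $n$ is odd), the next step is to produce a Morita equivalence $\perf_\dg(\bbP^1;\cB_0)\simeq\perf_\dg(C;\cF)$ for some smooth proper $k$-curve $C$ and some sheaf of Azumaya algebras $\cF$ on $C$. When $n$ is even, the fibers of $f$ are even-dimensional quadrics and the center of $\cB_0$ defines a discriminant double cover $C\to\bbP^1$, which is smooth thanks to the corank hypothesis; $\cF$ is then the Azumaya algebra on $C$ obtained by splitting $\cB_0$ along this cover. When $n$ is odd and $p\neq 2$, the fibers are odd-dimensional quadrics, $\cB_0$ is itself Azumaya on $\bbP^1$, and one may take $C=\bbP^1$ and $\cF=\cB_0$.

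Combining the two previous steps, $\perf_\dg(X)$ is realized as an iterated Orlov gluing of $\perf_\dg(C;\cF)$ with $n-4$ copies of $\perf_\dg(\Spec k)$. Iterated application of Theorem~\ref{thm:glue} (extended in the obvious way from schemes to arbitrary smooth proper $k$-linear dg categories, using the additivity of topological periodic cyclic homology under semi-orthogonal decompositions) reduces $\mathrm{W}_{\mathrm{nc}}(\perf_\dg(X))$ and $\mathrm{ST}_{\mathrm{nc}}(\perf_\dg(X))$ to the same conjectures for $\perf_\dg(C;\cF)$, the case of $\perf_\dg(\Spec k)$ being trivial. Theorem~\ref{thm:twist-new} further reduces these to $\mathrm{W}(C)$ and $\mathrm{ST}(C)$, both classical for curves, and Theorems~\ref{thm:main}--\ref{thm:strong} transport the conclusion back to $\mathrm{W}(X)$ and $\mathrm{ST}(X)$. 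The main technical obstacle will be in the second step: verifying that the Morita equivalence $\perf_\dg(\bbP^1;\cB_0)\simeq\perf_\dg(C;\cF)$ is valid over the finite field $\bbF_q$, and identifying characteristic~$2$ (for odd-dimensional quadric fibers) as the precise source of the hypothesis $p\neq 2$ in part~(ii), owing to the failure of the even Clifford algebra of an odd-dimensional nondegenerate quadratic form to be well-behaved in that characteristic.
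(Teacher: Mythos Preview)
Your approach to part~(i) is essentially the paper's: Kuznetsov's semi-orthogonal decomposition reduces everything to the Clifford component $\perf_\dg(\bbP^1;\cB_0)$, the center of $\cB_0$ defines a smooth discriminant double cover $\widetilde{\bbP}^1\to\bbP^1$, and $\cB_0$ becomes Azumaya over $\widetilde{\bbP}^1$, so Theorem~\ref{thm:twist-new} (or, as the paper does directly, triviality of $\Br$ for smooth curves over finite fields) reduces to the classical conjectures for the curve $\widetilde{\bbP}^1$.

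Part~(ii) has a genuine gap. For $n$ odd the even Clifford algebra $\cB_0$ is \emph{not} Azumaya over $\bbP^1$: it is Azumaya away from the discriminant, but at each corank-$1$ fiber it degenerates (the fiberwise form there has even rank $n-1$, and the even Clifford picks up a nontrivial center). Your proposed identification $C=\bbP^1$, $\cF=\cB_0$ therefore fails, and Theorem~\ref{thm:twist-new} does not apply. The paper's remedy is to pass not to a cover but to the discriminant \emph{stack} $\widehat{\bbP}^1$, which for $p\neq 2$ is a square root stack over $\bbP^1$ along the discriminant divisor; over $\widehat{\bbP}^1$ the even Clifford does become Azumaya, and $\perf(\bbP^1;\cB_0)$ is (Fourier--Mukai) equivalent to $\perf(\widehat{\bbP}^1;\cF)$. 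One then invokes Theorem~\ref{thm:root} (root stacks), together with Brauer triviality, to reduce to $\mathrm{W}$ and $\mathrm{ST}$ for $\bbP^1$ and for the zero-dimensional discriminant $D\hookrightarrow\bbP^1$. This is also the correct source of the hypothesis $p\neq 2$: it is needed so that $1/2\in k$ and $\widehat{\bbP}^1$ is a genuine $\sqrt[2]{\ }$-root stack, not because of a Clifford-algebra pathology in odd characteristic as you suggest.
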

\subsection*{Linear sections of determinantal varieties}
Let $U_1$ and $U_2$ be two finite-dimensional $k$-vector spaces of dimensions $d_1$ and $d_2$, respectively, $V:=U_1 \otimes U_2$, and $0 < r < d_1$ an integer. Consider the determinantal variety $\cZ^r_{d_1,d_2}\subset \bbP(V)$ defined as the locus of those matrices $U_2 \to U_1^\ast$ with rank $\leq r$.
\begin{example}[Segre varieties]
In the particular case where $r=1$, the determinantal varieties reduce to the classical Segre varieties. Concretely, $\cZ_{d_1, d_2}^1$ is given by the image of Segre homomorphism $\bbP(U_1) \times \bbP(U_2) \to \bbP(V)$.
\end{example}
In contrast with the Segre varieties, the varieties $\cZ^r_{d_1, d_2}$, with $r\geq 2$, are not smooth. Their singular locus consists of those matrices $U_2 \to U_1^\ast$ with rank $<r$, i.e., it agrees with the closed subvarieties $\cZ^{r-1}_{d_1, d_2}$. Nevertheless, it is well-known that $\cZ^r_{d_1, d_2}$ admits a canonical resolution of singularities $X:=\cX_{d_1, d_2}^r \to \cZ^r_{d_1, d_2}$. Dually, consider the variety $\cW^r_{d_1, d_2}\subset \bbP(V^\ast)$, defined as the locus of those matrices $U^\ast_2 \to U_1$ with corank $\geq r$, and the associated canonical resolution of singularities $Y:=\cY^r_{d_1, d_2} \to \cW^r_{d_1, d_2}$. Finally, given a linear subspace $L\subseteq V^\ast$, consider the associated linear sections $X_L:=X\times_{\bbP(V)}\bbP(L^\perp)$ and $Y_L:=Y \times_{\bbP(V^\ast)}\bbP(L)$.
\begin{theorem}\label{thm:HPD-duality}
Assume that $X_L$ and $Y_L$ are smooth\footnote{The linear section $X_L$ is smooth if and only if the linear section $Y_L$ is smooth.}, and that $\mathrm{codim}(X_L)=\mathrm{dim}(L)$ and $\mathrm{codim}(Y_L)=\mathrm{dim}(L^\perp)$. Under these assumptions (which hold for a generic choice of $L$), the following holds:
\begin{itemize}
\item[(i)] When $\mathrm{dim}(L)= r(d_1+d_2 -r)-2$, the conjectures $\mathrm{W}(Y_L)$ and $\mathrm{ST}(Y_L)$ hold.
\item[(ii)] When $\mathrm{dim}(L)= 2- r(d_1-d_2 -r)$, the conjectures $\mathrm{W}(X_L)$ and $\mathrm{ST}(X_L)$ hold.
\end{itemize}
\end{theorem}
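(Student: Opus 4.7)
The plan is to bootstrap the conjectures from curves, where they hold by the classical theorems of Weil and Tate, to these higher-dimensional linear sections via Kuznetsov's homological projective duality (HPD) for determinantal varieties (as developed by Kuznetsov, Bernardara-Bolognesi-Faenzi, Jiang-Leung-Xie, and Rennemo-Segal). By Theorems \ref{thm:main} and \ref{thm:strong}, the four conjectures $\mathrm{W}(X_L)$, $\mathrm{ST}(X_L)$, $\mathrm{W}(Y_L)$, $\mathrm{ST}(Y_L)$ are equivalent to their noncommutative counterparts applied to $\perf_\dg(X_L)$ and $\perf_\dg(Y_L)$, so it suffices to establish the latter.

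The smooth resolutions $X=\cX^r_{d_1,d_2}$ and $Y=\cY^r_{d_1,d_2}$ carry rectangular Lefschetz decompositions whose Lefschetz bases consist of exceptional objects, i.e., shifted copies of $\perf_\dg(\mathrm{pt})$. For generic $L\subset V^\ast$ satisfying the codimension constraints of the theorem, HPD produces semi-orthogonal decompositions of $\perf_\dg(X_L)$ and $\perf_\dg(Y_L)$ in which every piece is either such a shift of $\perf_\dg(\mathrm{pt})$ or one of the Kuznetsov components $\cT_\dg(X_L)$, $\cT_\dg(Y_L)$; moreover, HPD provides a canonical equivalence $\cT_\dg(X_L)\simeq\cT_\dg(Y_L)$. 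The specific numerical conditions in (i) and (ii) are precisely the thresholds at which one of $X_L$, $Y_L$ is forced to be a smooth curve $C$; in that case the Kuznetsov component of the curve equals its entire category of perfect complexes, so via the HPD equivalence the Kuznetsov component on the other side is identified with $\perf_\dg(C)$.

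Consequently, $\perf_\dg(Y_L)$ (in case (i)) and $\perf_\dg(X_L)$ (in case (ii)) admit semi-orthogonal decompositions whose pieces are either $\perf_\dg(\mathrm{pt})$ or $\perf_\dg(C)$. Since topological periodic cyclic homology $TP_\ast(-)_{1/p}$ together with its cyclotomic Frobenius, and the numerical Grothendieck group $K_0(-)_\bbQ/_{\!\sim\mathrm{num}}$, are both additive under semi-orthogonal decompositions, the noncommutative conjectures $\mathrm{W}_{\mathrm{nc}}$ and $\mathrm{ST}_{\mathrm{nc}}$ are likewise additive. They hold trivially for $\perf_\dg(\mathrm{pt})$, and for $\perf_\dg(C)$ they follow from Weil's and Tate's theorems for curves via Theorems \ref{thm:main} and \ref{thm:strong}.

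The main obstacle is the precise dimension bookkeeping: one must verify that the numerical conditions $\dim(L)=r(d_1+d_2-r)-2$ and $\dim(L)=2-r(d_1-d_2-r)$ are exactly the thresholds at which the relevant linear section becomes a smooth curve of the expected dimension, so that the HPD machinery correctly reduces the Kuznetsov component of the ``opposite'' variety to the category of perfect complexes of that curve. This requires carefully tracking the Lefschetz lengths computed in the HPD setup of Jiang-Leung-Xie (and Rennemo-Segal) for the noncommutative resolutions of $\cZ^r_{d_1,d_2}$ and $\cW^r_{d_1,d_2}$, together with the genericity assumptions $\mathrm{codim}(X_L)=\dim(L)$ and $\mathrm{codim}(Y_L)=\dim(L^\perp)$.
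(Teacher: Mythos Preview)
Your approach is correct and essentially matches the paper's: both bootstrap from curves via HPD for determinantal varieties, use additivity of $\mathrm{W}_{\mathrm{nc}}$ and $\mathrm{ST}_{\mathrm{nc}}$ under semi-orthogonal decompositions (the paper's Example~\ref{ex:semi}), and translate via Theorems~\ref{thm:main} and~\ref{thm:strong}.

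The paper's route is a bit more direct than yours. Rather than phrasing HPD through separate Kuznetsov components $\cT_\dg(X_L)\simeq\cT_\dg(Y_L)$ and then arguing that on the curve side this component coincides with the full category, the paper invokes \cite[Cor.~3.7]{HPD} (Bernardara--Bolognesi--Faenzi), which already gives the HPD output in the form needed: depending on whether $\dim(L)\lessgtr d_2 r$, one of $\perf(X_L)$, $\perf(Y_L)$ embeds in the other as a semi-orthogonal piece with a complement consisting of $|d_2 r-\dim(L)|\binom{d_1}{r}$ exceptional objects (and they are equivalent when $\dim(L)=d_2 r$). This immediately yields $\mathrm{W}(X_L)\Leftrightarrow\mathrm{W}(Y_L)$ and $\mathrm{ST}(X_L)\Leftrightarrow\mathrm{ST}(Y_L)$ via Example~\ref{ex:semi}, so there is no need to track Lefschetz lengths or invoke the noncommutative-resolution machinery of Jiang--Leung--Xie or Rennemo--Segal. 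The only remaining bookkeeping is then the pair of formulas $\dim(X_L)=r(d_1+d_2-r)-1-\dim(L)$ and $\dim(Y_L)=r(d_1-d_2-r)-1+\dim(L)$, from which the numerical conditions in (i) and (ii) are read off as exactly $\dim(X_L)=1$ and $\dim(Y_L)=1$, respectively.
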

\begin{example}[Segre varieties]\label{ex:Segre}
Let $r=1$. Thanks to Theorem \ref{thm:HPD-duality}(ii), when $\mathrm{dim}(L) = 3-d_1+d_2$, the conjectures $\mathrm{W}(X_L)$ and $\mathrm{ST}(X_L)$ hold. In all these cases, $X_L$ is a linear section of the Segre variety $\cZ^1_{d_1, d_2}$ Moreover, $\mathrm{dim}(X_L)=2d_1 -5$. Therefore, for example, by letting $d_1 \to \infty$ and by keeping $\mathrm{dim}(L)$ fixed, we obtain infinitely many examples of smooth projective $k$-schemes $X_L$, of arbitrary high dimension, satisfying the  Weil conjecture and the strong form of the Tate conjecture. In what concerns the latter conjecture, these examples are, to the best of the author's knowledge, new in the literature.
\end{example}
\begin{example}[Square matrices]\label{ex:square}
Let $d_1=d_2$. Thanks to Theorem \ref{thm:HPD-duality}(ii), when $\mathrm{dim}(L)= 2+r^2$, the conjectures $\mathrm{W}(X_L)$ and $\mathrm{ST}(X_L)$ hold. In all these cases, we have $\mathrm{dim}(X_L)=2r(d_1 - r)-3$. Therefore, for example, by letting $d_1\to \infty$ and by keeping $\mathrm{dim}(L)$ fixed, we obtain infinitely many examples of smooth projective $k$-schemes $X_L$, of arbitrary high dimension, satisfying the  Weil conjecture and the strong form of the Tate conjecture. In what concerns the latter conjecture, these examples are, to the best of the author's knowledge, new in the literature.
\end{example}
\section{Statement of results: $L$-functions}\label{sec:L-functions}
Let $X$ be a smooth proper $\bbQ$-scheme of dimension $d$. It is well-known that there exists a finite number of primes $p_1, \ldots, p_m$ and a smooth proper scheme $\mathfrak{X}$ over $\mathrm{Spec}(\bbZ[1/p_1, \ldots, 1/p_m])$ such that $X\simeq \mathfrak{X} \times_{\mathrm{Spec}(\bbZ[1/p_1, \ldots, 1/p_m])}\mathrm{Spec}(\bbQ)$. In what follows, we will assume that $X$ has {\em good reduction} at $p_1, \ldots, p_m$, i.e., we will assume that for every $i=1, \ldots, m$ there exists a smooth proper scheme $\mathfrak{X}_i$ over $\mathrm{Spec}(\bbZ_{(p_i)})$ such that $X\simeq \mathfrak{X}_i \times_{\mathrm{Spec}(\bbZ_{(p_i)})} \mathrm{Spec}(\bbQ)$. Given a prime $p\neq p_1, \ldots, p_m$, let us write $\mathfrak{X}_p := \mathfrak{X} \times_{\mathrm{Spec}(\bbZ[1/p_1, \ldots, 1/p_m])} \mathrm{Spec}(\bbF_p)$ for the fiber of $\mathfrak{X}$ at $p$. In the same vein, let us write $\mathfrak{X}_{p_i}:=\mathfrak{X}_i \times_{\mathrm{Spec}(\bbZ_{(p_i)})} \mathrm{Spec}(\bbF_{p_i})$ for the fiber of $\mathfrak{X}_i$ at $p_i$. Under these assumptions and notations, recall that the $L$-function of $X$ may be defined as the Euler product $L(X;s) := \prod_{p\neq p_1, \ldots, p_m} \zeta(\mathfrak{X}_p;s) \cdot \prod_{i=1}^m \zeta(\mathfrak{X}_{p_i};s)$.
As proved by Serre in \cite{Serre2,Serre1}, this infinite product converges (absolutely) in the half-plane $\mathrm{Re}(s)> d+1$. Moreover, $L(X;s)$ is non-zero in this half-plane region. 

In the same vein, given an integer $0 \leq w \leq 2d$, consider the associated $L$-function $
L_w(X;s) :=  \prod_{p \neq p_1, \ldots, p_m} \zeta_w(\mathfrak{X}_p;s) \cdot \prod_{i=1}^m \zeta_w(\mathfrak{X}_{p_i};s)$. Assuming the conjectures $\mathrm{W}(\mathfrak{X}_p)$ and $\mathrm{W}(\mathfrak{X}_{p_i})$ (which were proved by Deligne in the seventies), Serre proved in \cite{Serre2,Serre1} that the latter infinite product converges (absolutely) in the half-plane $\mathrm{Re}(s)>\frac{w}{2} +1$ and, moreover, that it is non-zero in this region. 

Finally, note that the weight decomposition \eqref{eq:factorization1} (applied to the fibers $\mathfrak{X}_p$ and $\mathfrak{X}_{p_i}$) yields the following weight decomposition of $L$-functions:
\begin{eqnarray}\label{eq:L-factorization}
L(X;s)= \frac{L_0(X;s)L_2(X;s) \cdots L_{2d}(X;s)}{L_1(X;s) L_3(X;s) \cdots L_{2d-1}(X;s)} && \mathrm{Re}(s)>d+1\,.
\end{eqnarray}

Let $\cA$ be a smooth proper $\bbQ$-linear dg category. Similarly to the commutative world,  there exists a finite number of primes $p_1, \ldots, p_m$ and a smooth proper $\bbZ[1/p_1, \ldots, 1/p_m]$-linear dg category $\mathfrak{A}$ such that $\cA$ and $\mathfrak{A} \otimes_{\bbZ[1/p_1, \ldots, 1/p_m]}\bbQ$ are Morita equivalent. In what follows, we will assume that $\cA$ has {\em good reduction} at $p_1, \ldots, p_m$, i.e., we will assume that for every $i=1, \ldots, m$ there exists a smooth proper $\bbZ_{(p_i)}$-linear dg category $\mathfrak{A}_i$ such that $\cA$ and $\mathfrak{A}\otimes_{\bbZ_{(p_i)}} \bbQ$ are Morita equivalent. Given a prime $p \neq p_1, \ldots, p_m$, let us write $\mathfrak{A}_p:= \mathfrak{A} \otimes^{\bf L}_{\bbZ[1/p_1, \ldots, 1/p_m]}\bbF_p$. In the same vein, let us write $\mathfrak{A}_{p_i}:=\mathfrak{A}_i\otimes^{\bf L}_{\bbZ_{(p_i)}}\bbF_{p_i}$. Under these assumptions and notations, we define the {\em even/odd $L$-function of $\cA$} as the following Euler product:
\begin{eqnarray}
L_{\mathrm{even}}(\cA;s) & := & \prod_{p \neq p_1, \ldots, p_m} \zeta_{\mathrm{even}}(\mathfrak{A}_p;s) \cdot \prod_{1\leq i \leq m} \zeta_{\mathrm{even}}(\mathfrak{A}_{p_i};s) \label{eq:L-ev}\\
L_{\mathrm{odd}}(\cA;s) &  := & \prod_{p \neq p_1, \ldots, p_m} \zeta_{\mathrm{odd}}(\mathfrak{A}_p;s) \cdot \prod_{1\leq i\leq m} \zeta_{\mathrm{odd}}(\mathfrak{A}_{p_i};s) \label{eq:L-odd}\,.
\end{eqnarray}
\begin{theorem}\label{thm:main-L-functions}
Assume that the conjectures $\mathrm{W}_{\mathrm{nc}}(\mathfrak{A}_p)$ and $\mathrm{W}_{\mathrm{nc}}(\mathfrak{A}_{p_i})$ hold. Under these assumptions, the infinite product \eqref{eq:L-ev}, resp. \eqref{eq:L-odd}, converges (absolutely) in the half-plane $\mathrm{Re}(s)>1$, resp. $\mathrm{Re}(s)>\frac{3}{2}$. Moreover, the $L$-functions $L_{\mathrm{even}}(\cA;s)$ and $L_{\mathrm{odd}}(\cA;s)$ are non-zero in these half-plane regions. 
\end{theorem}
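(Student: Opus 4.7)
The plan is to adapt Serre's classical argument from \cite{Serre2,Serre1} almost verbatim to the noncommutative setting. The input supplied by the conjectures $\mathrm{W}_{\mathrm{nc}}(\mathfrak{A}_p)$ and $\mathrm{W}_{\mathrm{nc}}(\mathfrak{A}_{p_i})$ is precisely the absolute-value information needed to run Serre's estimates: the eigenvalues of $\mathrm{F}_0$ acting on $TP_0(\mathfrak{A}_p)_{1/p}$ become, after embedding into $\bbC$, complex numbers of absolute value $1$, and those of $\mathrm{F}_1$ become complex numbers of absolute value $p^{1/2}$. Writing these eigenvalues as $\alpha_{i,p}$, the local factor admits the standard presentation
\begin{equation*}
\zeta_{\mathrm{even}}(\mathfrak{A}_p;s) \;=\; \prod_{i=1}^{\chi_0(\mathfrak{A}_p)} \bigl(1 - \iota(\alpha_{i,p})\, p^{-s}\bigr)^{-1},
\end{equation*}
and analogously for $\zeta_{\mathrm{odd}}(\mathfrak{A}_p;s)$.

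The key new ingredient is a uniform bound on the local ``Betti numbers'' $\chi_0(\mathfrak{A}_p)$ and $\chi_1(\mathfrak{A}_p)$. I would prove that there exists a constant $C=C(\cA)$, independent of $p$, such that $\chi_0(\mathfrak{A}_p), \chi_1(\mathfrak{A}_p) \leq C$ for almost all $p$. The natural approach is through a noncommutative analogue of the crystalline--de Rham comparison: for a smooth proper dg category over $\bbZ_{(p)}$ with generic fibre $\cA$, the dimension $\dim_K TP_n(\mathfrak{A}_p)_{1/p}$ should equal $\dim_\bbQ HP_n(\cA)$, the latter being manifestly independent of $p$. Such comparison results follow from recent advances on motivic filtrations (in the spirit of Antieau--Mathew--Morrow--Nikolaus), once the relevant prismatic invariants degenerate in the smooth proper case. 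The finitely many bad primes $p_1,\ldots,p_m$ are absorbed into the constant.

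Granted these two inputs, the convergence and non-vanishing follow by standard logarithmic majorisation. Taking logarithms of the local factor,
\begin{equation*}
\log \zeta_{\mathrm{even}}(\mathfrak{A}_p;s) \;=\; \sum_{i=1}^{\chi_0(\mathfrak{A}_p)} \sum_{n\geq 1} \frac{\iota(\alpha_{i,p})^n}{n}\, p^{-ns},
\end{equation*}
and combining $|\iota(\alpha_{i,p})|=1$ with $\chi_0(\mathfrak{A}_p) \leq C$ yields $|\log \zeta_{\mathrm{even}}(\mathfrak{A}_p;s)| \leq -C\log\bigl(1 - p^{-\mathrm{Re}(s)}\bigr) = O(p^{-\mathrm{Re}(s)})$ as $p\to\infty$. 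Comparing with Dirichlet's series $\sum_p p^{-\mathrm{Re}(s)}$, the series $\sum_p \log \zeta_{\mathrm{even}}(\mathfrak{A}_p;s)$ converges absolutely for $\mathrm{Re}(s)>1$, whence \eqref{eq:L-ev} converges absolutely there; non-vanishing is automatic, since $L_{\mathrm{even}}(\cA;s)$ is exhibited in this half-plane as the exponential of an absolutely convergent series. For the odd factor the eigenvalues contribute an extra $p^{1/2}$, so the dominating series becomes $\sum_p p^{1/2 - \mathrm{Re}(s)}$, pushing the half-plane of absolute convergence (and hence of non-vanishing) to $\mathrm{Re}(s)>3/2$.

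The main obstacle is the uniform dimension bound on $\chi_0(\mathfrak{A}_p)$ and $\chi_1(\mathfrak{A}_p)$. In Serre's commutative setup this follows cheaply from smooth proper base change, which equates the $\ell$-adic (equivalently, crystalline) Betti numbers of all smooth proper reductions $\mathfrak{X}_p$ with those of the generic fibre $X$. The noncommutative analogue is considerably more delicate, as it requires controlling the rank of topological periodic cyclic homology of $\mathfrak{A}_p$ in terms of a characteristic-$0$ invariant of $\cA$; this is precisely the type of statement that recent work on prismatic / topological cyclic homology of smooth proper dg categories has made accessible, and it is the one place where the noncommutative setting genuinely adds substance beyond Serre's original argument.
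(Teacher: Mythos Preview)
Your overall architecture is exactly Serre's, and it matches the paper's: bound the eigenvalues via the conjectures $\mathrm{W}_{\mathrm{nc}}$, bound the local Betti numbers $\chi_0(\mathfrak{A}_p),\chi_1(\mathfrak{A}_p)$ uniformly in $p$, then majorise the logarithm of the Euler product. The convergence and non-vanishing steps you wrote are essentially the paper's Proposition~\ref{prop:bound2} and Proposition~\ref{prop:Dirichlet}.

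The one substantive difference is how the uniform dimension bound is obtained. You propose a noncommutative crystalline--de Rham comparison, asserting $\dim_K TP_n(\mathfrak{A}_p)_{1/p}=\dim_\bbQ HP_n(\cA)$ and appealing vaguely to ``recent advances on motivic filtrations''. That equality, if available, would be stronger than needed, but you do not actually establish it, and at the level of generality required (arbitrary smooth proper dg categories over $\bbZ_{(p)}$) it is not something one can simply cite. The paper instead proves only the inequality that the argument needs, and by a much more elementary route (Proposition~\ref{prop:bound-11}): it bounds $\dim_{\bbF_p} HH_n(\mathfrak{A}_p)$ via base-change of Hochschild homology from the integral model $\mathfrak{A}$ (a K\"unneth short exact sequence over $\bbZ[1/p_1,\ldots,1/p_m]$), then passes to $HP_\ast(\mathfrak{A}_p)$ via the Hodge--to--de~Rham spectral sequence, and finally to $TP_\ast(\mathfrak{A}_p)_{1/p}$ via the identification $\pi_\ast(TP(\mathfrak{A}_p)/p)\simeq HP_\ast(\mathfrak{A}_p)$ of \cite{Ahkil,BMS}. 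Each step loses something, but the output is a uniform bound depending only on the number of generators of the finitely many nonzero $HH_n(\mathfrak{A})$. This avoids any deep comparison theorem and is entirely self-contained; your route, by contrast, trades elementarity for a sharper but unproven statement.
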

\begin{remark}[Convergence]
Note that the $L$-function $L_{\mathrm{even}}(\cA;s)$, resp. $L_{\mathrm{odd}}(\cA;s)$, converges (absolutely) in the half-plane $\mathrm{Re}(s)>1$, resp. $\mathrm{Re}(s)>\frac{3}{2}$, if and only if the infinite product $\prod_{p \neq p_1, \ldots, p_m} \zeta_{\mathrm{even}}(\mathfrak{A}_p;s)$, resp. $\prod_{p \neq p_1, \ldots, p_m} \zeta_{\mathrm{odd}}(\mathfrak{A}_p;s)$, converges (absolutely) in the same half-plane. Therefore, Theorem \ref{thm:main-L-functions} holds similarly without the assumption that the conjectures $\mathrm{W}_{\mathrm{nc}}(\mathfrak{A}_{p_i}), 1 \leq i \leq m$, hold.
\end{remark}
\begin{remark}[Alternative proof of Serre's convergence result]
By adapting the arguments used in the proof of Theorem \ref{thm:main-L-functions} (consult \S\ref{proof:main-L-functions}), we present in \S\ref{sec:alternative-Serre} below an alternative noncommutative proof of Serre's convergence result.
\end{remark}
\begin{example}[Smooth proper schemes]\label{ex:sp}
When $\cA= \perf_\dg(X)$, with $X$ a smooth proper $\bbQ$-scheme, we can choose for $\mathfrak{A}$ the dg category $\perf_\dg(\mathfrak{X})$ and for $\mathfrak{A}_i$ the dg category $\perf_\dg(\mathfrak{X}_i)$. Hence, since the dg categories $\mathfrak{A}_p$ and $\mathfrak{A}_{p_i}$ are Morita equivalent to $\perf_\dg(\mathfrak{X}_p)$ and $\perf_\dg(\mathfrak{X}_{p_i})$, respectively, we conclude that the factorizations of Corollary \ref{cor:main} (applied to the fibers $\mathfrak{X}_p$ and $\mathfrak{X}_{p_i}$) yield the following factorizations: 
\begin{eqnarray}
L_{\mathrm{even}}(\perf_\dg(X);s) & = & \prod_{w \, \mathrm{even}} L_w(X;s+\frac{w}{2})\quad \quad \quad\,\,\, \mathrm{Re}(s)>1 \label{eq:equality-L1}\\
L_{\mathrm{odd}}(\perf_\dg(X);s) & = & \prod_{w \, \mathrm{odd}} L_w(X;s+\frac{w-1}{2}) \quad \quad \mathrm{Re}(s)>\frac{3}{2} \label{eq:equality-L2}\,.
\end{eqnarray}
Roughly speaking, this shows that the even/odd $L$-function of $\perf_\dg(X)$ may be understood as the ``weight normalization'' of the product of the $L$-functions $L_w(X;s)$.
\end{example}
\begin{example}[Riemann zeta function]
In the particular case where $X=\mathrm{Spec}(\bbQ)$, we can choose for $\mathfrak{X}$ the smooth proper scheme $\mathrm{Spec}(\bbZ)$. Consequently, we conclude from Example \ref{ex:sp} that the even $L$-function $L_{\mathrm{even}}(\perf_\dg(\mathrm{Spec}(\bbQ));s)$ agrees with the famous Riemann zeta function $L(\mathrm{Spec}(\bbQ);s) := \prod_{p} \frac{1}{1-p^{-s}}=\sum_{n \geq 1} \frac{1}{n^s}$.
\end{example}
\begin{example}[Dedekind zeta functions]\label{ex:Dedekind}
In the particular case where $X=\mathrm{Spec}(F)$, with $F$ a number field, we can choose for $\mathfrak{X}$ the smooth proper scheme $\mathrm{Spec}(\cO_F)$ (over $\mathrm{Spec}(\bbZ)$), where $\cO_F$ stands for the ring of integers of $F$. Consequently, we conclude from Example \ref{ex:sp} that the even $L$-function $L_{\mathrm{even}}(\perf_\dg(\mathrm{Spec}(\bbF));s)$ agrees with the famous Dedekind zeta function $L(\mathrm{Spec}(F);s):= \prod_p \prod_{\cP\mid p} \frac{1}{1- N(\cP)^{-s}}= \sum_{\cI \subseteq \cO_F} \frac{1}{N(\cI)^s}$, where $\cP$, resp. $\cI$, is a prime ideal, resp. ideal, of the ring of integers $\cO_K$ and $N(\cP)$, resp. $N(\cI)$, is its norm. 
\end{example}

\subsection*{Meromorphic continuation}
Let $X$ be a smooth proper $\bbQ$-scheme of dimension $d$. A classical conjecture (see Weil \cite{Weil-old}) in the theory of $L$-functions is the following:

\vspace{0.1cm}

{\it Conjecture $\mathrm{M}(X)$: The $L$-function $L(X;s)$ admits a (unique) meromorphic continuation to the entire complex plane.}

\vspace{0.1cm}

In the same vein, given an integer $0 \leq w \leq 2d$, we have the following conjecture:

\vspace{0.1cm}

{\it Conjecture $\mathrm{M}_w(X)$: The $L$-function $L_w(X;s)$ admits a (unique) meromorphic continuation to the entire complex plane.}

\vspace{0.1cm}

The latter conjecture holds for $0$-dimensional schemes, for abelian varieties with complex multiplication, for varieties of Fermat type, for certain products of modular curves, and also for certain Shimura varieties. Besides these cases (and some other cases scattered in the literature), it remains wide open. Note that thanks to the above weight decomposition \eqref{eq:L-factorization}, we have the implication $\sum_{w=0}^{2d} \mathrm{M}_w(X) \Rightarrow \mathrm{M}(X)$.
\smallbreak

Given a smooth proper $\bbQ$-linear dg category $\cA$, the above conjecture(s) admits the following noncommutative counterpart:

\vspace{0.1cm}

{\it Conjecture $\mathrm{M}_{\mathrm{nc}}(\cA)$: The even $L$-function $L_{\mathrm{even}}(\cA;s)$, resp. the odd $L$-function $L_{\mathrm{odd}}(\cA;s)$, admits a (unique) meromorphic continuation to the~entire~complex~plane.}

\begin{remark}\label{rk:key-1}
Given a smooth proper $\bbQ$-scheme $X$ of dimension $d$, note that the factorizations \eqref{eq:equality-L1}-\eqref{eq:equality-L2} yield the implication $\sum_{w=0}^{2d} \mathrm{M}_w(X) \Rightarrow \mathrm{M}_{\mathrm{nc}}(\perf_\dg(X))$. In particular, if the conjectures $\{\mathrm{M}_w(X)\}^{2d}_{w=0}$ hold, then the above factorizations \eqref{eq:equality-L1}-\eqref{eq:equality-L2} hold in the entire complex plane.
\end{remark}

\subsection*{Tate conjecture}
Let $X$ be a smooth proper $\bbQ$-scheme of dimension $d$ and $0 \leq w \leq 2d$ an even integer. In what follows, we will assume that the conjecture $\mathrm{M}_w(X)$ holds. In the mid sixties, Tate \cite{Tate} conjectured the following:

\vspace{0.1cm}

{\it Conjecture $\mathrm{T}_w(X)$: The $L$-function $L_w(X;s)$ has a unique pole at $s=\frac{w}{2}+1$.}

\vspace{0.1cm}

This conjecture holds for $0$-dimensional schemes, for certain abelian varieties with complex multiplication, for certain varieties of Fermat type, for certain products of modular curves, and also for certain Shimura varieties. Besides these cases (and some other cases scattered in the literature), it remains wide open.

\smallbreak

Let $\cA$ be a smooth proper $\bbQ$-linear dg category. In what follows, we will assume that the conjectures $\mathrm{W}_{\mathrm{nc}}(\mathfrak{A}_p)$ and $\mathrm{W}_{\mathrm{nc}}(\mathfrak{A}_{p_i})$ hold. Recall from Theorem \ref{thm:main-L-functions} that this implies that $L_{\mathrm{even}}(\cA;s)$ converges (absolutely) in the half-plane $\mathrm{Re}(s)>1$. Also, we will assume that the conjecture $\mathrm{M}_{\mathrm{nc}}(\cA)$ holds. Under these assumptions, Tate's conjecture admits the following noncommutative counterpart:

\vspace{0.1cm}

{\it Conjecture $\mathrm{T}_{\mathrm{nc}}(\cA)$: The even $L$-function $L_{\mathrm{even}}(\cA;s)$ has a unique pole at $s=1$.}

\begin{remark}\label{rk:Tate-L}
Given a smooth proper $\bbQ$-scheme $X$, note that the factorization \eqref{eq:equality-L1} yields the implication $\sum_{w\,\mathrm{even}}\mathrm{T}_w(X) \Rightarrow\mathrm{T}_{\mathrm{nc}}(\perf_\dg(X))$.
\end{remark}

\subsection*{Beilinson conjecture}
Let $X$ be a smooth proper $\bbQ$-scheme of dimension $d$ and $0 \leq w \leq 2d$ an integer. In what follows, we will assume that the conjecture $\mathrm{M}_w(X)$ holds. Given an integer $0 \leq i \leq d$, let us write $\cZ^i(X)_\bbQ/_{\!\sim \mathrm{rat}}$ for the $\bbQ$-vector space of algebraic cycles of codimension $i$ on $X$ up to rational equivalence, $\cZ^i(X)^0_\bbQ/_{\!\sim\mathrm{rat}}$ for the $\bbQ$-subspace of those algebraic cycles which are homologically trivial, and $\cZ^i(X)_\bbQ/_{\!\sim\mathrm{hom}}$ for the $\bbQ$-vector space of algebraic cycles of codimension $i$ on $X$ up to homological equivalence\footnote{Recall that in characteristic zero all the classical Weil cohomology theories are isomorphic.}. Also, given integers $i, j \in \bbZ$, let us write $H^i_{\mathrm{mot}}(X;\bbQ(j))$ for the motivic cohomology~groups~of~$X$.

In the eighties, Beilinson \cite{Beilinson2,Beilinson1, Beilinson3} conjectured the following:

\vspace{0.1cm}

{\it Conjecture $\mathrm{B}^j_{w}(X)$: The following equalities hold:
$$\mathrm{ord}_{s=j}L_w(X;s) = \begin{cases} \begin{array}{lll}
\!\!\!-\mathrm{dim}_\bbQ \cZ^{\frac{w}{2}}(X)_\bbQ/_{\!\sim\mathrm{hom}} & j=\frac{w}{2}+1 & w\,\,\mathrm{even}\\
\!\!\!\mathrm{dim}_\bbQ H_{\mathrm{mot}}^{w+1}(X;\bbQ(w+1-j)) & j\leq \frac{w}{2} & w\,\,\mathrm{even} \\
\!\!\!\mathrm{dim}_\bbQ \cZ^{\frac{w+1}{2}}(X)^0_\bbQ/_{\!\sim\mathrm{rat}} & j=\frac{w+1}{2} & w\,\,\mathrm{odd}\\
\!\!\!\mathrm{dim}_\bbQ H_{\mathrm{mot}}^{w+1}(X;\bbQ(w+1-j)) & j\leq \frac{w-1}{2} & w\,\,\mathrm{odd}\,.
\end{array}
\end{cases}$$
}

\vspace{0.1cm}

This conjecture holds for $0$-dimensional schemes, for certain elliptic curves, for certain varieties of Fermat type, for certain products of modular curves, and also for certain Shimura varieties. Besides these cases (and some other cases scattered in the literature), it remains wide open.
\begin{remark}[Tate conjecture]\label{rk:Tate}
In addition to the conjecture $\mathrm{T}_w(X)$, the conjecture $\mathrm{B}_{w}^{\frac{w}{2}+1}(X)$, with $w$ even, was also formulated by Tate in \cite{Tate}. Note the parallelism between the set of conjectures $\{\mathrm{B}_w^{\frac{w}{2}+1}(X)\}_{w\,\mathrm{even}}$ and the strong form of the Tate conjecture (consult \S\ref{sec:intro}).
\end{remark}
\begin{remark}[Birch and Swinnerton-Dyer conjecture]
In the particular case where $X$ is an elliptic curve, the Beilinson conjecture $\mathrm{B}^1_{1}(X)$ reduces to the famous Birch and Swinnerton-Dyer conjecture \cite{Wiles}, which asserts that the order $\mathrm{ord}_{s=1}L_1(X;s)$ of the $L$-function $L_1(X;s)$ at the zero $s=1$ is equal to the rank of $\mathrm{Pic}^0(X)$.
\end{remark}
Let $\cA$ be a smooth proper $\bbQ$-linear dg category. In what follows, we will assume that the conjectures $\mathrm{W}_{\mathrm{nc}}(\mathfrak{A}_p)$ and $\mathrm{W}_{\mathrm{nc}}(\mathfrak{A}_{p_i})$ hold. Recall from Theorem \ref{thm:main-L-functions} that this implies that the even $L$-function $L_{\mathrm{even}}(\cA;s)$, resp. odd $L$-function $L_{\mathrm{odd}}(\cA;s)$, converges (absolutely) in the half-plane $\mathrm{Re}(s)>1$, resp. $\mathrm{Re}(s)>\frac{3}{2}$. Also, we will assume that the conjecture $\mathrm{M}_{\mathrm{nc}}(\cA)$ holds. Recall from \S\ref{sub:homological} below the definition of the Grothendieck group $K_0(\cA)_\bbQ$, of the $\bbQ$-subspace $K_0(\cA)_\bbQ^0$ of those Grothendieck classes which are homologically trivial, and of the homological Grothendieck group $K_0(\cA)_\bbQ/_{\!\sim\mathrm{hom}}$. Under these definitions, Beilinson's conjecture admits the following noncommutative counterpart(s):

\vspace{0.1cm}

{\it Conjecture $\mathrm{B}^j_{\mathrm{nc}, \mathrm{even}}(\cA)$: The following equalities hold:
\begin{equation}\label{eq:conj1}
\mathrm{ord}_{s=j}L_{\mathrm{even}}(\cA;s) = \begin{cases} -\mathrm{dim}_\bbQ K_0(\cA)_\bbQ/_{\!\sim\mathrm{hom}} & j=1\\
\mathrm{dim}_\bbQ K_{1}(\cA)_\bbQ & j= 0\\
\mathrm{dim}_\bbQ K_{3}(\cA)_\bbQ & j= -1\,.
\end{cases}
\end{equation}

\vspace{0.1cm}

}
{\it Conjecture $\mathrm{B}^j_{\mathrm{nc}, \mathrm{odd}}(\cA)$: The following equalities hold:
\begin{equation}\label{eq:conj2}
\mathrm{ord}_{s=j}L_{\mathrm{odd}}(\cA;s) = \begin{cases} \mathrm{dim}_\bbQ K_0(\cA)^0_\bbQ & j=1\\
\mathrm{dim}_\bbQ K_{2}(\cA)_\bbQ & j=0\,.
\end{cases}
\end{equation}
}
The noncommutative Beilinson conjecture(s) was originally invisioned by Kontsevich in his seminal talks \cite{Lefschetz,IAS}. The next result relates this conjecture(s) with Beilinson's original conjecture:
\begin{theorem}\label{thm:implications}
Given a smooth proper $\bbQ$-scheme $X$, we have the implications:
$$\sum_{w\,\mathrm{even}}\mathrm{B}_{w}^{\frac{w}{2}+1}(X)  \Rightarrow \mathrm{B}_{\mathrm{nc}, \mathrm{even}}^1(\perf_\dg(X))\quad \sum_{w\,\mathrm{odd}}\mathrm{B}_{w}^{\frac{w+1}{2}}(X) \Rightarrow \mathrm{B}_{\mathrm{nc}, \mathrm{odd}}^1(\perf_\dg(X))\,.$$ 
Assuming the Beilinson-Soul\'e vanishing conjecture (i.e., $H_{\mathrm{mot}}^i(X;\bbQ(j))=0$ when $i<0$ and also when $i=0$ and $j>0$), we have moreover the implications
$$\sum_{w\,\mathrm{even}}\mathrm{B}_{w}^{\frac{w}{2}}(X) \Rightarrow \mathrm{B}_{\mathrm{nc}, \mathrm{even}}^0(\perf_\dg(X))\quad \sum_{w\,\mathrm{even}}\mathrm{B}_{w}^{\frac{w}{2}-1}(X) \Rightarrow \mathrm{B}_{\mathrm{nc}, \mathrm{even}}^{-1}(\perf_\dg(X))$$  
as well as the implication $\sum_{w\,\mathrm{odd}}\mathrm{B}_{w}^{\frac{w-1}{2}}(X) \Rightarrow \mathrm{B}_{\mathrm{nc}, \mathrm{odd}}^0(\perf_\dg(X))$.
\end{theorem}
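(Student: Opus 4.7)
The plan is to reduce every implication to the weight-by-weight factorizations \eqref{eq:equality-L1}--\eqref{eq:equality-L2} of Example \ref{ex:sp}. Applying $\mathrm{ord}_{s=j}$ to both sides yields
\[
\mathrm{ord}_{s=j}L_{\mathrm{even}}(\perf_\dg(X);s) = \sum_{w\,\mathrm{even}}\mathrm{ord}_{s=j+\frac{w}{2}}L_w(X;s),\qquad \mathrm{ord}_{s=j}L_{\mathrm{odd}}(\perf_\dg(X);s) = \sum_{w\,\mathrm{odd}}\mathrm{ord}_{s=j+\frac{w-1}{2}}L_w(X;s).
\]
Since each classical hypothesis $\mathrm{B}^j_w(X)$ presupposes $\mathrm{M}_w(X)$, Remark \ref{rk:key-1} guarantees that both factorizations extend to all of $\bbC$, so these order-of-pole equalities make sense at every complex point we need.

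For the two unconditional implications, take $j=1$. On the even side, $\mathrm{B}_w^{\frac{w}{2}+1}(X)$ converts each summand into $-\dim_\bbQ \cZ^{w/2}(X)_\bbQ/_{\!\sim\mathrm{hom}}$; summing over even $w\in[0,2d]$ and applying the rational Chern character $K_0(X)_\bbQ\cong\bigoplus_i\cZ^i(X)_\bbQ/_{\!\sim\mathrm{rat}}$ (which descends modulo homological equivalence), together with the standard identification $K_0(\perf_\dg(X))_\bbQ=K_0(X)_\bbQ$, produces $-\dim_\bbQ K_0(\perf_\dg(X))_\bbQ/_{\!\sim\mathrm{hom}}$, which is exactly $\mathrm{B}^1_{\mathrm{nc},\mathrm{even}}(\perf_\dg(X))$. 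The odd $j=1$ case is parallel: $\mathrm{B}_w^{\frac{w+1}{2}}(X)$ contributes $\dim_\bbQ\cZ^{(w+1)/2}(X)^0_\bbQ/_{\!\sim\mathrm{rat}}$, and summing over odd $w\in[1,2d-1]$ recovers $\dim_\bbQ K_0(\perf_\dg(X))^0_\bbQ$ via the same Chern character decomposition, the $i=0$ summand dropping out because the class of $X$ itself is not homologically trivial.

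For the three implications assuming Beilinson-Soul\'e vanishing, the extra input is the rational Bloch/Adams decomposition $K_n(X)_\bbQ\cong\bigoplus_j H^{2j-n}_{\mathrm{mot}}(X;\bbQ(j))$ coming from higher Chow groups. For the odd $j=0$ case, $\mathrm{B}_w^{\frac{w-1}{2}}(X)$ turns each summand into $\dim_\bbQ H^{w+1}_{\mathrm{mot}}(X;\bbQ(\frac{w+3}{2}))$; reindexing via $j'=(w+3)/2$ identifies the sum with $\bigoplus_{j'\geq 2} H^{2j'-2}_{\mathrm{mot}}(X;\bbQ(j'))$, which equals $K_2(\perf_\dg(X))_\bbQ=K_2(X)_\bbQ$ precisely when the borderline summand $H^0_{\mathrm{mot}}(X;\bbQ(1))$ (the $j'=1$ piece in the Bloch decomposition) vanishes --- exactly the content of BS vanishing. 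The even $j=0$ and $j=-1$ cases proceed in the same way, matching the reindexed classical sums with $K_1(X)_\bbQ$ and $K_3(X)_\bbQ$ respectively, with BS vanishing disposing of the borderline degree-zero summands $H^0_{\mathrm{mot}}(X;\bbQ(j))$ for $j>0$ that would otherwise obstruct agreement with Bloch's decomposition.

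The bulk of the work is purely bookkeeping: tracking the index shifts between the two sides of each factorization, and confirming that every motivic cohomology summand of $K_n(X)_\bbQ$ falling outside the range produced by the factorization either vanishes automatically (negative cohomological degree) or is killed by BS vanishing (the borderline degree-zero summands with positive Tate twist). I do not anticipate any genuinely new analytic or motivic ingredient: once \eqref{eq:equality-L1}--\eqref{eq:equality-L2} and the rational Chern/Bloch decompositions of $K$-theory are in hand, each of the five implications collapses to a term-by-term match of dimensions, and the sole subtle point is the role of Beilinson-Soul\'e vanishing in pinning down the correct summation range for the higher $K$-groups.
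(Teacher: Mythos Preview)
Your overall strategy matches the paper's proof exactly: apply the factorizations \eqref{eq:equality-L1}--\eqref{eq:equality-L2} to compute $\mathrm{ord}_{s=j}$ term by term, then identify the resulting sums with the appropriate pieces of $K_\ast(X)_\bbQ$ via the Chern character (for $j=1$) or the Adams/Bloch decomposition (for the remaining cases). The two $j=1$ implications are handled correctly.

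There is, however, a genuine gap in the bookkeeping for the last two implications (those involving $K_2$ and $K_3$). You assert that every summand of $K_n(X)_\bbQ \simeq \bigoplus_{r=0}^{d+n} H^{2r-n}_{\mathrm{mot}}(X;\bbQ(r))$ lying outside the range produced by the factorization ``either vanishes automatically (negative cohomological degree) or is killed by BS vanishing.'' This is false at the \emph{top} of the range. Concretely, for $n=2$ the factorization over odd $w\in[1,2d-1]$ produces the range $r\in[2,d+1]$, while the Adams decomposition extends to $r=d+2$; the excess summand is $H^{2d+2}_{\mathrm{mot}}(X;\bbQ(d+2))$, which has positive cohomological degree and positive Tate twist, so neither of your two mechanisms applies. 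The same issue occurs for $n=3$ with the summand $H^{2d+3}_{\mathrm{mot}}(X;\bbQ(d+3))$.

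The paper closes this gap (in its Proposition~\ref{prop:final}) by an argument specific to the base field $\bbQ$: via the Gersten resolution, $H^{2d+n}_{\mathrm{mot}}(X;\bbQ(d+n))$ is a quotient of $\bigoplus_{x\in X^{(d)}} K^M_n(\kappa(x))_\bbQ$, and since each residue field $\kappa(x)$ is a number field, the Bass--Tate theorem forces $K^M_n(\kappa(x))$ to be torsion for $n\geq 2$, whence the rationalized group vanishes. You need to supply this (or an equivalent) argument; Beilinson--Soul\'e alone does not suffice.
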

\begin{remark}[Potential generalization]
Let $\cA$ be a smooth proper $\bbQ$-linear dg category. Motivated by the above noncommutative Beilinson conjecture(s), it is natural to ask if the following equalities should be added to \eqref{eq:conj1}-\eqref{eq:conj2}:
\begin{eqnarray}
\mathrm{ord}_{s=j}L_{\mathrm{even}}(\cA;s) = \mathrm{dim}_\bbQ K_{1-2j}(\cA)_\bbQ && j \leq -2 \label{eq:equiv111}\\
\mathrm{ord}_{s=j}L_{\mathrm{odd}}(\cA;s) = \mathrm{dim}_\bbQ K_{2-2j}(\cA)_\bbQ && j \leq -1\,. \label{eq:equiv222}
\end{eqnarray}
As explained in Remark \ref{rk:generalization2} below, when $\cA=\perf_\dg(X)$, with $X$ a smooth proper $\bbQ$-scheme, the Beilinson conjecture plus the Beilinson-Soul\'e vanishing conjecture imply that the equality \eqref{eq:equiv111}, resp. \eqref{eq:equiv222}, holds if and only if the motivic cohomology groups $\{H_{\mathrm{mot}}^{2r+2j-1}(X;\bbQ(r))\,|\,d-j+1< r \leq d-2j\}$, resp. $\{H_{\mathrm{mot}}^{2r+2j-2}(X;\bbQ(r))\,|\, d-j+1 < r \leq d-2j+1\}$, are zero. Unfortunately, to the best of the author's knowledge, nothing is known about these groups.
\end{remark}

\section{Preliminaries}\label{sec:preliminaries}
\subsection{Dg categories}\label{sub:dg}
For a survey on dg categories, we invite the reader to consult\footnote{Consult also the pioneering work \cite{Kapranov1}.} \cite{ICM-Keller}. Let $k$ be a commutative ring and $(\cC(k),\otimes, k)$ the category of (cochain) complexes of $k$-modules. A ($k$-linear) {\em dg category $\cA$} is a category enriched over $\cC(k)$ and a {\em dg functor $F\colon \cA \to \cB$} is a functor enriched over $\cC(k)$. In what follows, we will write $\dgcat(k)$ for the category of small dg categories and dg functors.

Let $\cA$ be a dg category. The opposite dg category $\cA^\op$, resp. category $\dgHo(\cA)$, has the same objects as $\cA$ and $\cA^\op(x,y):=\cA(y,x)$, resp. $\dgHo(\cA)(x,y):=H^0(\cA(x,y))$. A {\em right dg $\cA$-module} is a dg functor $M\colon \cA^\op \to \cC_\dg(k)$ with values in the dg category of complexes of $k$-modules. Let $\cC(\cA)$ be the category of right dg $\cA$-modules. Following \cite[\S3.2]{ICM-Keller}, the {\em derived category $\cD(\cA)$ of $\cA$} is defined as the localization of $\cC(\cA)$ with respect to the objectwise quasi-isomorphisms. In what follows, we will write $\cD_c(\cA)$ for the subcategory of compact objects. 

A dg functor $F\colon \cA \to \cB$ is called a {\em Morita equivalence} if it induces an equivalence between derived categories $\cD(\cA) \simeq \cD(\cB)$; see \cite[\S4.6]{ICM-Keller}. As explained in \cite[\S1.6]{book}, the category $\dgcat(k)$ admits a Quillen model structure whose weak equivalences are the Morita equivalences. Let $\Hmo(k)$ be the associated homotopy category.

The {\em tensor product $\cA\otimes_k \cB$} of dg categories is defined as follows: the set of objects of $\cA\otimes_k \cB$ is the cartesian product of the sets of objects of $\cA$ and $\cB$ and $(\cA\otimes_k \cB)((x,w), (y,z)):=\cA(x,y) \otimes_k \cB(w,z)$. As explained in \cite[\S1.1.1 and \S1.6.4]{book}, this construction gives rise to a symmetric monoidal structure $-\otimes_k -$ on the category $\dgcat(k)$, which descends $-\otimes_k^{\bf L}-$ to the homotopy category $\Hmo(k)$.

A {\em dg $\cA\text{-}\cB$-bimodule} is a dg functor $\mathrm{B}\colon \cA\otimes_k^{\bf L}\cB^\op \to \cC_\dg(k)$. An example is the dg $\cA\text{-}\cB$-bimodule $
{}_F\mathrm{B}\colon (x,w) \mapsto \cB(w, F(x))$ associated to a dg functor $F\colon\cA\to\cB$.

Following \cite{Miami,finMot,IAS,ENS}, a dg category $\cA$ is called {\em smooth} if the dg $\cA\text{-}\cA$-bimodule ${}_{\mathrm{id}}\mathrm{B}$ belongs to the subcategory $\cD_c(\cA^\op \otimes_k^{\bf L} \cA)$ and {\em proper} if all the complexes of $k$-modules $\cA(x,y)$ belong to the subcategory $\cD_c(k)$. As explained in \cite[Thm.~1.43]{book}, the smooth proper dg categories may be (conceptually) characterized as the dualizable objects of the symmetric monoidal category $\Hmo(k)$. Moreover, the dual of a smooth proper dg category $\cA$ is its opposite dg category $\cA^\op$. In what follows, we will write $\dgcat_{\mathrm{sp}}(k)$ and $\Hmo_{\mathrm{sp}}(k)$ for the full symmetric monoidal subcategories of smooth proper dg categories. 

\subsection{Noncommutative motives}\label{sub:NC}
For a book, resp. survey, on noncommutative motives, we invite the reader to consult \cite{book}, resp. \cite{survey}. Recall from \cite[\S4.1]{book} the construction of the category of {\em noncommutative Chow motives} $\NChow(k)_\bbQ$. This category is $\bbQ$-linear, additive, idempotent complete, rigid symmetric monoidal\footnote{Recall that a symmetric monoidal category is called {\em rigid} if all its objects are dualizable.}, and comes equipped with a (composed) symmetric monoidal functor:
$$U(-)_\bbQ \colon \dgcat_{\mathrm{sp}}(k) \too \Hmo_{\mathrm{sp}}(k)_\bbQ \too \NChow(k)_\bbQ\,.$$ 
Moreover, given smooth proper dg categories $\cA$ and $\cB$, we have an isomorphism:
\begin{equation}\label{eq:Homs}
\Hom_{\NChow(k)_\bbQ}(U(\cA)_\bbQ, U(\cB)_\bbQ) \simeq K_0(\cA^\op \otimes^{\bf L}_k\cB)_\bbQ\,.
\end{equation}

Recall from \cite[\S4.6]{book} the construction of the category of {\em noncommutative numerical motives} $\NNum(k)_\bbQ$. This category is also $\bbQ$-linear, additive, idempotent complete, rigid symmetric monoidal, and comes equipped with a (quotient) $\bbQ$-linear symmetric monoidal functor $\NChow(k)_\bbQ \to \NNum(k)_\bbQ$.

\subsection{Numerical Grothendieck group}\label{sub:numerical}
Let $k$ be a field. Given a smooth proper $k$-linear dg category $\cA$, recall that its Grothendieck group $K_0(\cA):=K_0(\cD_c(\cA))$ comes equipped with the Euler bilinear pairing $\chi\colon K_0(\cA) \times K_0(\cA) \to \bbZ$ defined as follows $([M],[N])\mapsto \sum_n (-1)^n \mathrm{dim}_k\Hom_{\cD_c(\cA)}(M,N[n])$.
This pairing is not symmetric neither skew-symmetric. Nevertheless, making use of the notion of Serre functor developed in \cite{Kapranov2}, it can be shown that the associated left and right kernels of $\chi$ agree; consult \cite[Prop.~4.24]{book}. Hence, the  {\em numerical Grothendieck group} $K_0(\cA)/_{\!\sim \mathrm{num}}$ is defined as the quotient of $K_0(\cA)$ by the kernel of $\chi$. As proved in \cite[Thm.~5.1]{finite}, $K_0(\cA)/_{\!\sim \mathrm{num}}$ is a finitely generated free abelian group. In what follows, we will write $K_0(\cA)_\bbQ/_{\!\sim \mathrm{num}}$ for the associated finite-dimensional $\bbQ$-vector space $K_0(\cA)/_{\!\sim \mathrm{num}} \otimes_\bbZ\bbQ$. Finally, recall from \cite[\S4.6-\S4.7]{book} that, given smooth proper dg categories $\cA$ and $\cB$, we have an isomorphism:
\begin{equation}\label{eq:numerical}
\Hom_{\NNum(k)_\bbQ}(U(\cA)_\bbQ, U(\cB)_\bbQ) \simeq K_0(\cA^\op\otimes_k \cB)_\bbQ/_{\!\sim \mathrm{num}}\,.
\end{equation}
\subsection{Homological Grothendieck group}\label{sub:homological}
Let $k$ be a field of characteristic zero. Following \cite[\S9]{JEMS}, periodic cyclic homology gives rise to a $\bbQ$-linear functor
\begin{equation}\label{eq:HP}
HP_\ast(-)\colon \NChow(k)_\bbQ \too \mathrm{mod}_{\bbZ}(k[v^{\pm1}])
\end{equation}
with values in the category of (degreewise finite-dimensional) $\bbZ$-graded $k[v^{\pm1}]$-modules, where $v$ is a variable of degree $-2$. Therefore, given a smooth proper $k$-linear dg category $\cA$, by combining the functor \eqref{eq:HP} with the above identification \eqref{eq:Homs} (with $\cA:=k$ and $\cB:=\cA$), we obtain an induced $\bbQ$-linear homomorphism $\mathrm{ch}\colon K_0(\cA)_\bbQ \to HP_0(\cA)$. Under these notations, the {\em homological Grothendieck group $K_0(\cA)_\bbQ/_{\!\sim \mathrm{hom}}$} is defined as the quotient of $K_0(\cA)_\bbQ$ by the kernel of $\mathrm{ch}$. In the same vein, $K_0(\cA)_\bbQ^0$ is defined as the kernel of $\mathrm{ch}$.
\section{Topological periodic cyclic homology}\label{sec:cyclotomic}
Let $k=\bbF_q$ be a finite field of characteristic $p$, with $q=p^r$, $W(k)$ the ring of $p$-typical Witt vectors of $k$, and $K:=W(k)_{1/p}$ the fraction field of $W(k)$. For recent/modern references on topological (periodic) cyclic homology, we invite the reader to consult \cite{Lars,NS} (and also \cite{Ahkil, BM}). Topological periodic cyclic homology gives rise to a symmetric monoidal functor $TP_\ast(-)_{1/p}\colon \dgcat_{\mathrm{sp}}(k) \to \mathrm{mod}_\bbZ(K[v^{\pm1}])$ with values in the category of (degreewise finite-dimensional) $\bbZ$-graded $K[v^{\pm1}]$-modules, where $v$ is a variable of degree $-2$. As explained in \cite[Thm.~2.3]{finite}, this functor yields a $\bbQ$-linear symmetric monoidal functor:
\begin{equation}\label{eq:TP2}
TP_\ast(-)_{1/p}\colon \NChow(k)_\bbQ \too \mathrm{mod}_\bbZ(K[v^{\pm1}])\,.
\end{equation}
\subsection{Cyclotomic Frobenius}\label{sub:cyclotomic}
Let $\cA$ be a smooth proper $k$-linear dg category. By construction, its topological Hochschild homology $THH(\cA)$ carries a canonical cyclotomic structure in the sense of \cite[\S2]{NS}. Using the $S^1$-action on $THH(\cA)$, we can consider the spectrum of homotopy orbits $THH(\cA)_{hS^1}$, the spectrum of homotopy fixed-points $TC^-(\cA):=THH(\cA)^{h S^1}$, and also the Tate construction $TP(\cA):=THH(\cA)^{t S^1}$ in the sense of Greenlees \cite{Greenlees}. As explained in \cite[Cor.~I.4.3]{NS}, these spectra are related by the following cofiber sequence
\begin{equation}\label{eq:cofiber}
\Sigma \,THH(\cA)_{hS^1} \stackrel{N}{\too} THH(\cA)^{hS^1} \stackrel{\mathrm{can}}{\too} THH(\cA)^{tS^1}\,, 
\end{equation}
where $N$ stands for the norm map. It is well-known that the abelian groups $THH_\ast(\cA)$ are $k$-linear. Hence, after inverting $p$, we have $\Sigma \,THH(\cA)_{hS^1}[1/p]\simeq \ast$. Consequently, the above cofiber sequence \eqref{eq:cofiber} leads to a canonical isomorphism:
\begin{equation}\label{eq:canonical}
\mathrm{can}\colon TC_\ast^-(\cA)_{1/p} \stackrel{\simeq}{\too} TP_\ast(\cA)_{1/p}\,.
\end{equation}
It is also well-known that the spectrum $THH(\cA)$ is a dualizable $THH(k)$-module spectrum. Thanks to B\"okstedt's celebrated computation $THH_\ast(k)\simeq k[u]$, where $u$ is a variable of degree $2$, this implies that the spectrum $THH(\cA)$ is {\em bounded below}, i.e., there exists an integer $m \in \bbZ$ such that $THH_n(\cA)=0$ for every $n <m$. Since the abelian groups $THH_\ast(\cA)$ are $k$-linear, this also implies that the spectrum $THH(\cA)$ is $p$-complete. Therefore, as explained in \cite[Lem.~II.4.2]{NS}, the cyclotomic structure of $THH(\cA)$ yields another homomorphism:
\begin{equation}\label{eq:cyclotomic}
\varphi_p\colon TC^-_\ast(\cA)_{1/p} \too TP_\ast(\cA)_{1/p}\,.
\end{equation} 
It follows from \cite[Prop.~4.7]{Ahkil} that the homomorphism  \eqref{eq:cyclotomic} is invertible. Hence, let us write $\varphi:=\varphi_p \circ \mathrm{can}^{-1}$ for the induced automorphism of $TP_\ast(\cA)_{1/p}$. The automorphism $\varphi$ is not $K$-linear. Instead, it is semilinear with respect to the isomorphism $K\stackrel{\simeq}{\to} K$ induced by the Frobenius map $\lambda \mapsto \lambda^p$ on $k$. Therefore, its $r$-fold composition $\mathrm{F}_\ast:=\varphi^r$ becomes a $K$-linear automorphism of $TP_\ast(\cA)_{1/p}$.\
\begin{notation}\label{not:Frobenius}
The $K$-linear automorphism $\mathrm{F}_\ast$ is called the {\em cyclotomic Frobenius}.
\end{notation}
The cyclotomic Frobenius $\mathrm{F}_\ast$ is not $K[v^{\pm1}]$-linear. Instead, it is semilinear with respect to the $\bbZ$-graded $K$-algebra isomorphism $\tau\colon K[v^{\pm1}] \stackrel{\simeq}{\to} K[v^{\pm1}], v \mapsto qv$. In other words, we have the following commutative squares (for every $n \in \bbZ$):
\begin{equation}\label{eq:squares}
\xymatrix{
TP_n(\cA)_{1/p} \ar[rr]^-{v\cdot -}_-\simeq \ar[d]_-{\mathrm{F}_n}^-\simeq && TP_{n-2}(\cA)_{1/p} \ar[d]^-{\frac{1}{q}\cdot \mathrm{F}_{n-2}}_-\simeq \\
TP_n(\cA)_{1/p} \ar[rr]^-{v\cdot -}_-\simeq && TP_{n-2}(\cA)_{1/p}\,.
}
\end{equation}
Consequently, we obtain an induced isomorphism of $\bbZ$-graded $K[v^{\pm1}]$-modules:
$$ \mathrm{F}_\ast\colon TP_\ast(\cA)_{1/p}^\tau:= TP_\ast(\cA)_{1/p} \otimes_{K[v^{\pm1}], \tau} K[v^{\pm1}] \stackrel{\simeq}{\too} TP_\ast(\cA)_{1/p}\,.$$
Given smooth proper $k$-linear dg categories $\cA$ and $\cB$, we have a natural isomorphism $\mathrm{F}_\ast^{\cA\otimes_k \cB} \simeq \mathrm{F}_\ast^\cA \otimes_{K[v^{\pm1}]} \mathrm{F}_\ast^\cB$. Therefore, by construction of the category $\NChow(k)_\bbQ$, the assignment $U(\cA)_\bbQ \mapsto \mathrm{F}_\ast^\cA$ (parametrized by the smooth proper dg categories $\cA$) yields a $\bbQ$-linear symmetric monoidal natural transformation from the functor
\begin{equation}\label{eq:twist}
TP_\ast(-)_{1/p}^\tau\colon \NChow(k)_\bbQ \too \mathrm{mod}_\bbZ(K[v^{\pm1}])
\end{equation}
to the above functor \eqref{eq:TP2}.
\begin{remark}[Loss of information]\label{rk:semilinearity}
The above commutative squares \eqref{eq:squares} show that there is no loss of information in working solely with $\mathrm{F}_0$ and $\mathrm{F}_1$ (as~done~as~\S\ref{sec:intro}).

Similarly to \S\ref{sec:intro}, given an integer $n \in \bbZ$ and an embedding $\iota\colon K \hookrightarrow \bbC$, we can consider the following Hasse-Weil zeta function:
$$ \zeta_n(\cA;s):=\mathrm{det}(\id - q^{-s}(\mathrm{F}_n\otimes_{K, \iota}\bbC)|TP_n(\cA)_{1/p}\otimes_{K, \iota}\bbC)^{-1}\,.$$
Thanks to the above squares \eqref{eq:squares}, we have $\zeta_n(\cA;s)= \zeta_0(\cA; s+ \frac{n}{2})$ when $n$ is even and $\zeta_n(\cA;s)=\zeta_1(\cA; s+ \frac{n-1}{2})$ when $n$ is odd. Consequently, there is no loss of information in working solely with the even/odd Hasse-Weil zeta functions $\zeta_{\mathrm{even}}(\cA;s):= \zeta_0(\cA;s)$ and $\zeta_{\mathrm{odd}}(\cA;s):=\zeta_1(\cA;s)$ (as done in \S\ref{sec:intro}).
\end{remark}
\subsection{Enriched topological periodic cyclic homology functor}\label{sub:enriched}
Let us write $\mathrm{Aut}(K[v^{\pm1}])^\tau$ for the category of $\tau$-semilinear automorphisms in $\mathrm{mod}_\bbZ(K[v^{\pm1}])$. Recall that an object of $\mathrm{Aut}(K[v^{\pm1}])^\tau$ is a pair $(V_\ast, f_\ast)$, where $V_\ast$ is a (degreewise finite-dimensional) $\bbZ$-graded $K[v^{\pm1}]$-module $V_\ast$ and $f_\ast\colon V_\ast^\tau \stackrel{\simeq}{\to} V_\ast$ is an automorphism. Thanks to the considerations of \S\ref{sub:cyclotomic}, note that by combining the functor \eqref{eq:TP2} with the cyclotomic Frobenius natural transformation, we obtain the following $\bbQ$-linear symmetric monoidal functor:
\begin{eqnarray}\label{eq:enriched}
\NChow(k)_\bbQ \too \mathrm{Aut}(K[v^{\pm1}])^\tau && U(\cA)_\bbQ \mapsto (TP_\ast(\cA)_{1/p}, \mathrm{F}_\ast)\,.
\end{eqnarray}
In what follows, we will call \eqref{eq:enriched} the {\em enriched topological periodic cyclic homology functor}. As explained in \S\ref{sub:embedding} below, this functor enables an alternative formulation of the noncommutative strong form of the Tate conjecture (when all smooth proper dg categories are considered simultaneously).
\section{Proof of Theorem \ref{thm:main}}
Following \cite[Thm.~2]{Elment}\cite[Thm.~5.2]{finite-CD} (this is a result of Scholze), we have natural isomorphisms of finite-dimensional $K$-vector spaces:
\begin{eqnarray}\label{eq:Scholze}
TP_0(\perf_\dg(X))_{1/p} & \simeq & \bigoplus_{w\,\mathrm{even}}H^w_{\mathrm{crys}}(X) \label{eq:Scholze1}\\
TP_1(\perf_\dg(X))_{1/p} & \simeq &  \bigoplus_{w\,\mathrm{odd}}H^w_{\mathrm{crys}}(X)\,. \label{eq:Scholze2}
\end{eqnarray}
Moreover, following \cite[\S7]{Lars}, the cyclotomic Frobenius $\mathrm{F}_0$ corresponds under the above isomorphism \eqref{eq:Scholze1} to the following automorphism:
\begin{equation}\label{eq:auto-1}
\bigoplus_{w\,\mathrm{even}}q^{-\frac{w}{2}}\mathrm{Fr}^w\colon \bigoplus_{w \, \mathrm{even}} H^w_{\mathrm{crys}}(X) \stackrel{\simeq}{\too} \bigoplus_{w\,\mathrm{even}} H^w_{\mathrm{crys}}(X)\,.
\end{equation}
Similarly, the cyclotomic Frobenius $\mathrm{F}_1$ corresponds under the above isomorphism \eqref{eq:Scholze2} to the following automorphism:
\begin{equation}\label{eq:auto-2}
\bigoplus_{w\,\mathrm{odd}}q^{-\frac{w-1}{2}}\mathrm{Fr}^w\colon \bigoplus_{w \, \mathrm{odd}} H^w_{\mathrm{crys}}(X) \stackrel{\simeq}{\too} \bigoplus_{w\,\mathrm{odd}} H^w_{\mathrm{crys}}(X)\,.
\end{equation}
As usual, let $d:=\mathrm{dim}(X)$. Also, given an integer $0 \leq w\leq 2d$, let $\beta_w:=\mathrm{dim}_KH^w_{\mathrm{crys}}(X)$ and $\{\lambda_{(w,1)}, \ldots, \lambda_{(w,\beta_w)}\}$ be the eigenvalues (with multiplicity) of the automorphism $\mathrm{Fr}^w$. Thanks to \eqref{eq:auto-1}-\eqref{eq:auto-2}, the eigenvalues of the cyclotomic Frobenius $\mathrm{F}_0$ are given by $\bigcup_{w\,\mathrm{even}}\{q^{-\frac{w}{2}}\lambda_{(w,1)}, \ldots, q^{-\frac{w}{2}}\lambda_{(w,\beta_w)}\}$ and the eigenvalues of the cyclotomic Frobenius $\mathrm{F}_1$ are given by $\bigcup_{w\,\mathrm{odd}}\{q^{-\frac{w-1}{2}}\lambda_{(w,1)}, \ldots, q^{-\frac{w-1}{2}}\lambda_{(w,\beta_w)}\}$. Consequently, since the numbers $q^{-\frac{w}{2}}$, with $w$ even, and $q^{-\frac{w-1}{2}}$, with $w$ odd, are rational, we conclude that $\mathrm{W}_{\mathrm{nc}}(\perf_\dg(X))$ holds if and only if $\mathrm{W}(X)$ holds. 
\subsection*{Proof of Corollary \ref{cor:main}}
Thanks to \eqref{eq:auto-1}-\eqref{eq:auto-2}, note that we have the following description of the even/odd Hasse-Weil zeta function:
\begin{eqnarray*}
\zeta_{\mathrm{even}}(\perf_\dg(X); s) & = & \prod_{w\, \mathrm{even}}\mathrm{det}(\id - q^{-(s + \frac{w}{2})}(\mathrm{Fr}^w \otimes_{K, \iota}\bbC)|H^w_{\mathrm{crys}}(X)\otimes_{K, \iota}\bbC)^{-1} \\
\zeta_{\mathrm{odd}}(\perf_\dg(X); s) & = & \prod_{w\, \mathrm{odd}}\mathrm{det}(\id - q^{-(s + \frac{w-1}{2})}(\mathrm{Fr}^w \otimes_{K, \iota}\bbC)|H^w_{\mathrm{crys}}(X)\otimes_{K, \iota}\bbC)^{-1}\,.
\end{eqnarray*}
Since the polynomials $\mathrm{det}(\id - t \mathrm{Fr}^w |H^w_{\mathrm{crys}}(X))$ have integer coefficients, we hence conclude that the right-hand sides of the above equalities are equal to the products $\prod_{w\,\mathrm{even}}\zeta_w(X;s+\frac{w}{2})$ and $\prod_{w\,\mathrm{odd}}\zeta_w(X;s+\frac{w-1}{2})$, respectively.

\section{Proof of Theorem \ref{thm:new}}

Since the proof uses some results of \S\ref{sec:functional}, we postpone it to \S\ref{sec:new}.

\section{Proof of Theorem \ref{thm:functional}}\label{sec:functional}
We start by recalling the following general result, whose proof is a simple linear algebra exercise that we leave for the reader.
\begin{lemma}\label{lem:aux2}
Let $\theta\colon V\otimes_K W \to K$ a perfect bilinear pairing of finite-dimensional $K$-vector spaces, $f$ an automorphism of $V$, $g$ an automorphism of $W$, and $\lambda \in K$ a non-zero scalar, making the following diagram commute:
$$
\xymatrix@C=2.2em@R=3.2em{
V\otimes_K W \ar[d]_-{f\otimes_K g}^-\simeq \ar[r]^-{\theta} & K \ar[d]^-{\lambda\cdot -}_-\simeq \\
V\otimes_K W \ar[r]_-{\theta}& K\,.
}
$$
Under these assumptions, we have the following equality of polynomials:
$$ \mathrm{det}(\id - t g|W) = \frac{(-1)^{\mathrm{dim}(V)} \lambda^{\mathrm{dim}(V)} t^{\mathrm{dim}(V)}}{\mathrm{det}(f|V)}\cdot \mathrm{det}(\id - \lambda^{-1}t^{-1} f|V)\,.$$ 
\end{lemma}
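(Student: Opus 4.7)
The plan is to reduce the identity to a computation with eigenvalues by using the perfect pairing to identify $g$ with a (twisted) inverse transpose of $f$.

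First, I would unpack the commutativity of the diagram. The perfect pairing $\theta$ induces an isomorphism $\phi\colon W \isoto V^\ast$ characterized by $\phi(w)(v)=\theta(v\otimes w)$. The commutativity of the diagram says $\theta(f(v)\otimes g(w))=\lambda\,\theta(v\otimes w)$ for all $v\in V$ and $w\in W$. Translating this through $\phi$, one obtains the equality $f^\ast\circ \phi\circ g = \lambda\,\phi$ in $\Hom_K(W,V^\ast)$, or equivalently $\phi\circ g = \lambda\,(f^\ast)^{-1}\circ\phi$. Hence $g$ is conjugate (via $\phi$) to the automorphism $\lambda\,(f^\ast)^{-1}$ of $V^\ast$. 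In particular $\mathrm{dim}_K(W)=\mathrm{dim}_K(V)=:n$.

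Second, I would read off the eigenvalues. Let $\mu_1,\ldots,\mu_n\in \overline{K}$ be the eigenvalues of $f$ on $V$, listed with algebraic multiplicity. Since $f^\ast$ has the same eigenvalues as $f$, the above conjugation shows that the eigenvalues of $g$ are exactly $\lambda/\mu_1,\ldots,\lambda/\mu_n$. Consequently
\begin{equation*}
\mathrm{det}(\id - t g\,|\,W) \;=\; \prod_{i=1}^{n}\Bigl(1-\tfrac{\lambda t}{\mu_i}\Bigr).
\end{equation*}

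Third, I would manipulate the right-hand side of the claimed identity into the same form:
\begin{equation*}
\mathrm{det}(\id - \lambda^{-1}t^{-1} f\,|\,V)
\;=\;\prod_{i=1}^{n}\bigl(1-\lambda^{-1}t^{-1}\mu_i\bigr)
\;=\;\prod_{i=1}^{n}\frac{\lambda t-\mu_i}{\lambda t}\,.
\end{equation*}
Multiplying by the prefactor $(-1)^{n}\lambda^{n}t^{n}/\mathrm{det}(f\,|\,V)=(-1)^{n}\lambda^{n}t^{n}/\prod_i\mu_i$ and distributing, one gets
\begin{equation*}
\prod_{i=1}^{n}\frac{-(\lambda t-\mu_i)}{\mu_i}
\;=\;\prod_{i=1}^{n}\frac{\mu_i-\lambda t}{\mu_i}
\;=\;\prod_{i=1}^{n}\Bigl(1-\tfrac{\lambda t}{\mu_i}\Bigr),
\end{equation*}
which coincides with the expression for $\mathrm{det}(\id-tg\,|\,W)$ obtained above.

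There is no real obstacle here: the only substantive content is the translation of the diagrammatic hypothesis into the relation $\phi\circ g=\lambda(f^\ast)^{-1}\circ\phi$, after which everything follows by a routine eigenvalue computation (noting that all factors are nonzero since $f$ is an automorphism, so $\mu_i\neq 0$). One could alternatively argue without passing to eigenvalues by working over the algebraic closure or by using the identity $\det(\id - tA)=\det(-tA)\det(\id - t^{-1}A^{-1})$ for an automorphism $A$, applied to $A=\lambda^{-1}f$; this yields the same equality in one line once the adjointness relation between $f$ and $g$ has been established.
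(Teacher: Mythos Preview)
Your proof is correct. The paper itself omits the argument entirely, stating only that it ``is a simple linear algebra exercise that we leave for the reader''; your approach via the identification $\phi\circ g=\lambda(f^\ast)^{-1}\circ\phi$ and the resulting eigenvalue computation is precisely the sort of routine verification the author has in mind.
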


\begin{proposition}\label{prop:pairings}
Given a smooth proper $k$-linear dg category $\cA$, there exist perfect bilinear pairings $\theta_0$ and $\theta_1$ making the following diagrams commute:
$$
\xymatrix@C=2.2em@R=3.2em{
TP_0(\cA^\op)_{1/p} \otimes_K TP_0(\cA)_{1/p} \ar[d]_-{\mathrm{F}_0 \otimes_K \mathrm{F}_0}^{\simeq} \ar[r]^-{\theta_0} & K \ar@{=}[d] & TP_1(\cA^\op)_{1/p} \otimes_K TP_1(\cA)_{1/p} \ar[d]_-{\mathrm{F}_1 \otimes_K \mathrm{F}_1}^{\simeq} \ar[r]^-{\theta_1} & K \ar[d]^-{q\cdot -}_-{\simeq} \\
TP_0(\cA^\op)_{1/p} \otimes_K TP_0(\cA)_{1/p} \ar[r]_-{\theta_0} & K & TP_1(\cA^\op)_{1/p} \otimes_K TP_1(\cA)_{1/p} \ar[r]_-{\theta_1} & K\,.
}
$$
\end{proposition}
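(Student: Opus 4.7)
The plan is to derive both pairings from the dualizability of $\cA$ as a smooth proper dg category, applying the symmetric monoidal functor $TP_\ast(-)_{1/p}$, and to deduce the Frobenius compatibility from the naturality of the cyclotomic Frobenius.

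First, since $\cA$ is smooth proper, it is dualizable in $\Hmo_{\mathrm{sp}}(k)$ with dual $\cA^{\op}$, so there is an evaluation morphism $\mathrm{ev}\colon \cA^{\op} \otimes_k \cA \to k$. Applying the symmetric monoidal functor $TP_\ast(-)_{1/p}$ and using the induced K\"unneth isomorphism $TP_\ast(\cA^{\op} \otimes_k \cA)_{1/p} \simeq TP_\ast(\cA^{\op})_{1/p} \otimes_{K[v^{\pm 1}]} TP_\ast(\cA)_{1/p}$, we obtain a $K[v^{\pm 1}]$-linear pairing
\[
\Phi_\ast\colon TP_\ast(\cA^{\op})_{1/p} \otimes_{K[v^{\pm 1}]} TP_\ast(\cA)_{1/p} \too K[v^{\pm 1}] = TP_\ast(k)_{1/p},
\]
which is perfect by dualizability.

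Next, $\theta_0$ is defined as the restriction of $\Phi_0$ to the summand $TP_0(\cA^{\op})_{1/p} \otimes_K TP_0(\cA)_{1/p}$ of the degree-$0$ part of the balanced tensor product, using the identification $TP_0(k)_{1/p} = K$. The pairing $\theta_1$ is defined by pairing $TP_1(\cA^{\op})_{1/p} \otimes_K TP_1(\cA)_{1/p}$ into the degree of $TP_\ast(k)_{1/p}$ where the cyclotomic Frobenius scales by precisely $q$ and trivializing by a power of $v$: concretely, landing in $TP_{-2}(k)_{1/p} = K\cdot v$ and identifying $K\cdot v \simeq K$ via $v\mapsto 1$. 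The perfectness of $\theta_0$ and $\theta_1$ follows from that of $\Phi_\ast$ combined with the $2$-periodicity of $TP_\ast(-)_{1/p}$.

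For the Frobenius compatibility, we invoke that $\mathrm{F}_\ast$ is a symmetric monoidal natural transformation from $TP_\ast(-)_{1/p}^\tau$ to $TP_\ast(-)_{1/p}$ (as described in \S\ref{sub:cyclotomic}). Applied to $\mathrm{ev}$, naturality yields the commutative square $\Phi_\ast \circ \mathrm{F}^{\cA^{\op} \otimes_k \cA} = \mathrm{F}^k \circ \Phi_\ast$. The $\tau$-semilinearity of $\mathrm{F}$, together with $\mathrm{F}_0$ fixing the unit of $TP_0(k)_{1/p}$, forces $\mathrm{F}_{-2}(v) = \tau(v)\mathrm{F}_0(1) = qv$ (this is the content of the squares \eqref{eq:squares} applied to $k$). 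Consequently $\mathrm{F}^k$ acts as the identity on $TP_0(k)_{1/p} = K$ and, via the identification $K\cdot v \simeq K$, as multiplication by $q$ on $TP_{-2}(k)_{1/p}$, matching exactly the right-hand vertical maps in the two diagrams.

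The main technical delicacy lies in the choice of $v$-shift used to extract $\theta_1$ from $\Phi_\ast$, dictated by the requirement that the target degree of $TP_\ast(k)_{1/p}$ be one where $\mathrm{F}^k$ scales by exactly $q$; this amounts to careful bookkeeping of the $\tau$-semilinearity of $\mathrm{F}$ through the K\"unneth isomorphism. Once the conventions are fixed, perfectness and Frobenius-compatibility are formal consequences of dualizability and monoidal naturality.
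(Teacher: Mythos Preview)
Your proposal is correct and follows essentially the same route as the paper: both extract $\theta_0$ and $\theta_1$ from the evaluation map $\cA^{\op}\otimes_k\cA\to k$ via the symmetric monoidal functor $TP_\ast(-)_{1/p}$, and obtain the Frobenius compatibility from the fact that $\mathrm{F}_\ast$ is a symmetric monoidal natural transformation $TP_\ast(-)_{1/p}^\tau\Rightarrow TP_\ast(-)_{1/p}$. The only cosmetic differences are that the paper works in $\NChow(k)_\bbQ$ rather than $\Hmo_{\mathrm{sp}}(k)$, and that it lands $\theta_1$ in $TP_2(k)_{1/p}$ rather than $TP_{-2}(k)_{1/p}$; either choice is fine once the $\tau$-semilinearity is tracked, and your bookkeeping via $\mathrm{F}_{-2}(v)=qv$ is consistent with the squares \eqref{eq:squares}.
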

\begin{proof}
Recall from \S\ref{sub:cyclotomic} that the assignment $U(\cA)_\bbQ \mapsto \mathrm{F}^\cA_\ast$ (parametrized by the smooth proper dg categories $\cA$) yields a $\bbQ$-linear symmetric monoidal natural transformation from the functor \eqref{eq:twist} to the functor \eqref{eq:TP2}. Recall also from \S\ref{sub:dg}-\S\ref{sub:NC} that $U(\cA)_\bbQ$ is a dualizable object of the symmetric monoidal category $\NChow(k)_\bbQ$ and that $U(\cA^\op)_\bbQ$ is the dual of $U(\cA)_\bbQ$. Consequently, by applying the functors \eqref{eq:twist} and \eqref{eq:TP2} to the evaluation morphism $U(\cA^\op)_\bbQ\otimes U(\cA)_\bbQ \to U(k)_\bbQ$, we obtain the following commutative diagram:
\begin{equation}\label{eq:square-aux}
\xymatrix@C=2.2em@R=3.2em{
TP_\ast(\cA^\op)_{1/p}^\tau \otimes_{K[v^{\pm1}]}TP_\ast(\cA)_{1/p}^\tau \ar[d]_-{\mathrm{F}_\ast\otimes_{K[v^{\pm1}]}\mathrm{F}_\ast}^-\simeq \ar[rr] && TP_\ast(k)_{1/p}^\tau=K[v^{\pm1}]^\tau \ar[d]^-{\mathrm{F}_\ast^k}_-\simeq \\
TP_\ast(\cA^\op)_{1/p} \otimes_{K[v^{\pm1}]} TP_\ast(\cA)_{1/p} \ar[rr] && TP_\ast(k)_{1/p} = K[v^{\pm1}]\,.
}
\end{equation}
Note that $TP_n(-)_{1/p}^\tau=TP_n(-)_{1/p}$ for every (fixed) integer $n \in \bbZ$. Hence, we define the left-hand side commutative diagram of Proposition \ref{prop:pairings} as the following composition
$$
\xymatrix@C=1em@R=3.2em{
TP_0(\cA^\op)_{1/p}^\tau \otimes_K TP_0(\cA)_{1/p}^\tau \ar[d]_-{\mathrm{F}_0 \otimes_K\mathrm{F}_0}^-\simeq \ar[r] & (TP_\ast(\cA^\op)_{1/p}^\tau\otimes_{K[v^{\pm1}]}TP_\ast(\cA)_{1/p}^\tau)_0 \ar[d]_-{(\mathrm{F}_\ast\otimes_{K[v^{\pm1}]}\mathrm{F}_\ast)_0}^-\simeq \ar[r] & TP_0(k)_{1/p}^\tau \ar@{=}[d]^-{\mathrm{F}_0=\id} \\
TP_0(\cA^\op)_{1/p} \otimes_K TP_0(\cA)_{1/p} \ar[r] & (TP_\ast(\cA^\op)_{1/p}\otimes_{K[v^{\pm1}]}TP_\ast(\cA)_{1/p})_0 \ar[r] & TP_0(k)_{1/p}\,,
}
$$
where the left-hand side horizontal morphisms are induced by the monoidal structure of the category $\mathrm{mod}_\bbZ(K[v^{\pm1}])$ and the right-hand side horizontal morphisms are induced from \eqref{eq:square-aux}. By construction, the horizontal composition(s), denoted by $\theta_0$, is a perfect bilinear pairing. Similarly, the right-hand side commutative diagram of Proposition \ref{prop:pairings} is defined as the composition
$$
\xymatrix@C=1em@R=3.2em{
TP_1(\cA^\op)_{1/p}^\tau \otimes_K TP_1(\cA)_{1/p}^\tau \ar[d]_-{\mathrm{F}_1\otimes_K\mathrm{F}_1}^-\simeq \ar[r] & (TP_\ast(\cA^\op)_{1/p}^\tau\otimes_{K[v^{\pm1}]}TP_\ast(\cA)_{1/p}^\tau)_2 \ar[d]_-{(\mathrm{F}_\ast\otimes_{K[v^{\pm1}]}\mathrm{F}_\ast)_2}^-\simeq \ar[r] & TP_2(k)_{1/p}^\tau \ar[d]_-\simeq^-{\mathrm{F}_2=q\cdot -} \\
TP_1(\cA^\op)_{1/p} \otimes_K TP_1(\cA)_{1/p} \ar[r] & (TP_\ast(\cA^\op)_{1/p}\otimes_{K[v^{\pm1}]}TP_\ast(\cA)_{1/p})_2 \ar[r] & TP_2(k)_{1/p}\,,
}
$$
where the left-hand side horizontal morphisms are induced by the monoidal structure of the category $\mathrm{mod}_\bbZ(K[v^{\pm1}])$ and the right-hand side horizontal morphisms are induced from \eqref{eq:square-aux}. By construction, the horizontal composition(s), denoted by $\theta_1$, is a perfect bilinear pairing. 
\end{proof}
By construction of periodic cyclic homology, we have $TP_\ast(\cA^\op)_{1/p}=TP_\ast(\cA)_{1/p}$ (as $\mathbb{Z}$-graded $K[v^{\pm}]$-modules) and $\mathrm{F}_\ast^{\cA^\op}=\mathrm{F}_\ast^{\cA}$. Therefore, by applying the above general Lemma \ref{lem:aux2} to the perfect bilinear pairings $\theta_0$ and $\theta_1$ of Proposition \ref{prop:pairings}, we hence obtain the following equalities of polynomials:
$$
\mathrm{det}(\id - t\mathrm{F}_0|TP_0(\cA)_{1/p}) = \frac{(-1)^{\chi_0(\cA)}t^{\chi_0(\cA)}}{\mathrm{det}(\mathrm{F}_0|TP_0(\cA)_{1/p})}\cdot \mathrm{det}(\id - t^{-1}\mathrm{F}_0|TP_0(\cA)_{1/p})
$$
$$
\mathrm{det}(\id - t\mathrm{F}_1^\cA|TP_1(\cA)_{1/p}) = \frac{(-1)^{\chi_1(\cA)}q^{\chi_1(\cA)}t^{\chi_1(\cA)}}{\mathrm{det}(\mathrm{F}_1|TP_1(\cA)_{1/p})} \cdot \mathrm{det}(\id - q^{-1}t^{-1}\mathrm{F}_1|TP_1(\cA)_{1/p})\,.
$$
Now, choose an embedding $\iota\colon K \hookrightarrow \bbC$ and replace $\mathrm{F}_0$ and $\mathrm{F}_1$ by $\mathrm{F}_0\otimes_{K, \iota} \bbC$ and $\mathrm{F}_1\otimes_{K, \iota} \bbC$, respectively. By further replacing $t$ by $q^{-s}$, and then by passing to the inverse, we hence obtain the sought functional equations of Theorem \ref{thm:functional}.
\subsection*{Proof of Corollary \ref{cor:functional}}
Note that the isomorphisms \eqref{eq:Scholze1}-\eqref{eq:Scholze2} imply that $\chi_0(\perf_\dg(X))=\chi_{\mathrm{even}}(X)$ and $\chi_1(\perf_\dg(X))=\chi_{\mathrm{odd}}(X)$. Note also that the descriptions \eqref{eq:auto-1} and \eqref{eq:auto-2} of $\mathrm{F}_0$ and $\mathrm{F}_1$, respectively, lead to the following equalities
\begin{eqnarray}
\mathrm{det}(\mathrm{F}_0\otimes_{K, \iota}\bbC) & = &  \prod_{w\,\mathrm{even}}q^{-\frac{w}{2}\beta_w} \cdot \mathrm{det}(\mathrm{Fr}^w \otimes_{K, \iota}\bbC) \label{eq:equiv11} \\
\mathrm{det}(\mathrm{F}_1\otimes_{K, \iota}\bbC) & = & \prod_{w\,\mathrm{odd}}q^{-\frac{w-1}{2}\beta_w} \cdot \mathrm{det}(\mathrm{Fr}^w \otimes_{K, \iota}\bbC) \label{eq:equiv22}\,,
\end{eqnarray}
where $\beta_w:=\mathrm{dim}_KH^w_{\mathrm{crys}}(X)$. Now, recall, for example from \cite[App.~C Thm.~4.4]{Hartshorne}, that we have the following equalities
\begin{eqnarray}\label{eq:equalities-last}
\mathrm{det}(\mathrm{Fr}^{2d-w}\otimes_{K, \iota}\bbC)= \frac{q^{d\beta_w}}{\mathrm{det}(\mathrm{Fr}^w \otimes_{K, \iota}\bbC)} && 0 \leq w \leq 2d\,,
\end{eqnarray}
where $d:=\mathrm{dim}(X)$. By combining the equalities \eqref{eq:equalities-last} with the fact that $\beta_w=\beta_{2d-w}$ for every $0 \leq w \leq 2d$, we hence conclude (via a simple computation) that the square of \eqref{eq:equiv11}, resp. \eqref{eq:equiv22}, is equal to $1$, resp. to $q^{\chi_{\mathrm{odd}}(X)}$. These considerations imply that the functional equations of Theorem \ref{thm:functional}, with $\cA=\perf_\dg(X)$, reduce to the following functional equations:
\begin{eqnarray*}
\zeta_{\mathrm{even}}(\perf_\dg(X);s) & = & \pm q^{\chi_{\mathrm{even}}(X)s} \cdot \zeta_{\mathrm{even}}(\perf_\dg(X); -s)\\
\zeta_{\mathrm{odd}}(\perf_\dg(X);s)& = & \pm q^{\chi_{\mathrm{odd}}(X)s} \cdot \zeta_{\mathrm{odd}}(\perf_\dg(X); 1-s)\,.
\end{eqnarray*}
Consequently, the proof follows now from Corollary \ref{cor:main}.

\section{Proof of Theorem \ref{thm:new}}\label{sec:new}
Let $\lambda$ be an eigenvalue of the automorphism $\mathrm{F}_0$. If, by hypothesis, $q^C\lambda$ is an algebraic integer, then $\lambda$ is, in particular, an algebraic number. Therefore, it suffices to prove that all the $l$-adic conjugates of $\lambda$ have absolute value $1$.

As explained in \S\ref{sec:functional}, there exists a perfect bilinear pairing $\theta_0$ making the following diagram commute (consult Proposition \ref{prop:pairings} and the subsequent arguments):
\begin{equation}\label{eq:pairing-new}
\xymatrix{
TP_0(\cA)_{1/p} \otimes_K TP_0(\cA)_{1/p} \ar[d]_-{\mathrm{F}_0 \otimes_K \mathrm{F}_0}^{\simeq} \ar[rr]^-{\theta_0} && K \ar@{=}[d] \\
TP_0(\cA)_{1/p} \otimes_K TP_0(\cA)_{1/p} \ar[rr]_-{\theta_0} && K \,.
}
\end{equation}
Thanks to Lemma \ref{lem:aux2}, this implies that whenever $\lambda$ is an eigenvalue of the automorphism $\mathrm{F}_0$, $\frac{1}{\lambda}$ is also an eigenvalue of the automorphism $\mathrm{F}_0$.

Let us write $\bbQ(\lambda)/\bbQ$ for the (finite) field extension of $\bbQ$ generated by $\lambda$, $\cO \subset \bbQ(\lambda)$ for the associated ring of integers, and $(q^C\lambda)=\mathfrak{P}_1\cdots \mathfrak{P}_n$ and $(q^C\frac{1}{\lambda})=\mathfrak{P}'_1\cdots \mathfrak{P}'_{m}$ for the (unique) prime decomposition in $\cO$ of the ideals generated by the algebraic integers $q^C\lambda$ and $q^C \frac{1}{\lambda}$, respectively. Since $\cO$ is a Dedekind domain, we have the following (unique) prime decomposition:
$$ (q^{2C})= (q^C\lambda)(q^C \frac{1}{\lambda})= \mathfrak{P}_1\cdots \mathfrak{P}_n \mathfrak{P}'_1 \cdots \mathfrak{P}'_{m}\,.$$
This implies that all the prime ideals $\mathfrak{P}_1, \ldots, \mathfrak{P}_n, \mathfrak{P}'_1, \ldots, \mathfrak{P}'_{m}$ lie over $p \in \bbZ$. Consequently, since $\lambda=(q^C)^{-1}(q^C\lambda)$, we conclude that the absolute value of all the $l$-adic conjugates of $\lambda$ is necessarily equal to $1$. This finishes the proof in the case of the eigenvalues of $\mathrm{F}_0$. The proof in the case of the eigenvalues of $\mathrm{F}_1$ is similar. Simply replace \eqref{eq:pairing-new} by the commutative diagram:
\begin{equation}\label{eq:pairing-new1}
\xymatrix{
TP_1(\cA)_{1/p} \otimes_K TP_1(\cA)_{1/p} \ar[d]_-{\mathrm{F}_1 \otimes_K \mathrm{F}_1}^{\simeq} \ar[rr]^-{\theta_1} && K \ar[d]^-{q\cdot -}_-{\simeq} \\
TP_1(\cA)_{1/p} \otimes_K TP_1(\cA)_{1/p} \ar[rr]_-{\theta_1} && K \,.
}
\end{equation}
Similarly to \eqref{eq:pairing-new}, \eqref{eq:pairing-new1} implies that whenever $\lambda$ is an eigenvalue of the automorphism $\mathrm{F}_1$, $\frac{q}{\lambda}$ is also an eigenvalue of the automorphism $\mathrm{F}_1$.
\begin{remark}[Smooth proper schemes]\label{rk:new}
Let $X$ be a smooth proper $k$-scheme of dimension $d$. Given an integer $0 \leq w\leq 2d$, let $\beta_w:=\mathrm{dim}_KH_{\mathrm{crys}}^w(X)$ and $\{\lambda_{(w,1)}, \cdots, \lambda_{(w, \beta_w)}\}$ be the eigenvalues (with multiplicities) of the automorphism $\mathrm{Fr}^w$. As explained in the proof of Theorem \ref{thm:main}, the eigenvalues of the cyclotomic Frobenius $\mathrm{F}_0$ ($\mathrm{F}_0$ is an automorphism of $TP_0(\perf_\dg(X))_{1/p}$) are given by $\bigcup_{w\,\mathrm{even}}\{q^{-\frac{w}{2}}\lambda_{(w,1)}, \ldots, q^{-\frac{w}{2}}\lambda_{(w,\beta_w)}\}$ and the eigenvalues of the cyclotomic Frobenius $\mathrm{F}_1$ are given by $\bigcup_{w\,\mathrm{odd}}\{q^{-\frac{w-1}{2}}\lambda_{(w,1)}, \ldots, q^{-\frac{w-1}{2}}\lambda_{(w,\beta_w)}\}$. It is well-known that the eigenvalues $\lambda_{(w,1)}, \ldots, \lambda_{(w, \beta_w)}$ are algebraic integers. Therefore, by taking $C:=d$, we conclude that the eigenvalues of the automorphisms $\mathrm{F}_0$ and $\mathrm{F}_1$ become algebraic integers after multiplication by $q^C$. In other words, the assumption of Theorem \ref{thm:new} holds when $\cA=\perf_\dg(X)$.
\end{remark}
\section{Noncommutative strong form of the Tate conjecture}\label{sec:alternative}
Let $k=\bbF_q$ be a finite field of characteristic $p$. In this section we prove that the noncommutative strong form of the Tate conjecture is equivalent to the noncommutative $p$-version of the Tate conjecture plus the noncommutative standard conjecture of type $D$. As a byproduct of this equivalence of conjectures, we obtain a proof of Theorem \ref{thm:strong} and also an alternative formulation of the noncommutative strong form of the Tate conjecture in terms of the enriched topological periodic cyclic homology functor (see \S\ref{sub:embedding}).
\subsection{Noncommutative standard conjecture of type $D$}\label{sub:standard}
Let $\cA$ be a smooth proper dg category. Similarly to \S\ref{sub:homological}, by combining the functor \eqref{eq:TP2} with the identification \eqref{eq:Homs} (with $\cA:=k$ and $\cB:=\cA$), we obtain an induced $\bbQ$-linear homomorphism $\mathrm{ch}\colon K_0(\cA)_\bbQ \to TP_0(\cA)_{1/p}$ and the associated homological Grothendieck group $K_0(\cA)_\bbQ/_{\!\sim\mathrm{hom}}$. Under these notations, the noncommutative standard conjecture of type $D$ asserts the following:

\vspace{0.1cm}

{\em Conjecture $\mathrm{D}_{\mathrm{nc}}(\cA)$: The equality $K_0(\cA)_\bbQ/_{\!\sim\mathrm{hom}}= K_0(\cA)_\bbQ/_{\!\sim \mathrm{num}}$ holds.}

\begin{remark}[Standard conjecture of type $D$]\label{rk:equivalence1}
Let $X$ be a smooth proper $k$-scheme. As proved in \cite[Thm.~1.1]{finite-CD}, we have the following equivalence of conjectures $\mathrm{D}_{\mathrm{nc}}(\perf_\dg(X))\Leftrightarrow \mathrm{D}(X)$, where $\mathrm{D}(X)$ stands for the standard conjecture of type $D$ (consult Grothendieck \cite{Grothendieck} and Kleiman \cite{Kleiman,Kleiman1}).
\end{remark}

\subsection{Noncommutative $p$-version of the Tate conjecture}\label{sub:p-Tate}
Let $\cA$ be a smooth proper dg category. As proved in \cite[Lem.~3.7]{HPD-Tate}, the $\bbQ$-linear homomorphism $\mathrm{ch}\colon K_0(\cA)_\bbQ \to TP_0(\cA)_{1/p}$ defined in \S\ref{sub:standard} takes values in $TP_0(\cA)_{1/p}^{\mathrm{F}_0}$. Hence, we can consider the induced $K$-linear homomorphism $\mathrm{ch}_K\colon K_0(\cA)_K \to TP_0(\cA)_{1/p}^{\mathrm{F}_0}$ and the associated homological Grothendieck group $K_0(\cA)_K/_{\!\sim\mathrm{hom}}$. Under these notations, the noncommutative $p$-version of the Tate conjecture asserts the following:

\vspace{0.1cm}

{\em Conjecture $\mathrm{T}^p_{\mathrm{nc}}(\cA)$: The homomorphism $\mathrm{ch}_K$ is surjective.}

\begin{remark}[$p$-version of the Tate conjecture]\label{rk:equivalence2}
Let $X$ be a smooth proper $k$-scheme. As proved in \cite[Thm.~1.3]{HPD-Tate}, we have the following equivalence of conjectures $\mathrm{T}^p_{\mathrm{nc}}(\perf_\dg(X))\Leftrightarrow \mathrm{T}^p(X)$, where $\mathrm{T}^p(X)$ stands for the $p$-version of the Tate conjecture (consult Milne \cite{Milne} and Tate \cite{Tate}).
\end{remark}
\subsection{Equivalence of conjectures}
The next result is of independent interest:
\begin{theorem}\label{thm:alternative}
Given a smooth proper $k$-linear dg category $\cA$, we have the equivalence of conjectures $\mathrm{ST}_{\mathrm{nc}}(\cA) \Leftrightarrow \mathrm{T}^p_{\mathrm{nc}}(\cA) +  \mathrm{D}_{\mathrm{nc}}(\cA)$.
\end{theorem}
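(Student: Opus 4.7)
\noindent\textbf{Proof proposal for Theorem \ref{thm:alternative}.}

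The plan is to compare four numerical invariants of $\cA$: set $n := \dim_\bbQ K_0(\cA)_\bbQ/_{\!\sim\mathrm{num}}$, $h := \dim_\bbQ K_0(\cA)_\bbQ/_{\!\sim\mathrm{hom}}$, and write $m_g$ and $m_a$ for the geometric and algebraic multiplicities of the eigenvalue $1$ of the cyclotomic Frobenius $\mathrm{F}_0$ acting on $V:=TP_0(\cA)_{1/p}$. By Remark \ref{rk:multiplicity}, $\mathrm{ST}_{\mathrm{nc}}(\cA)$ is exactly the equality $m_a=n$. I claim three unconditional inequalities: first, $h\geq n$, with equality iff $\mathrm{D}_{\mathrm{nc}}(\cA)$ holds (because the Euler pairing factors as $\theta_0\circ(\mathrm{ch}\otimes\mathrm{ch})$ by symmetric monoidality of the functor \eqref{eq:TP2}, whence $\ker\mathrm{ch}\subseteq\ker\chi$); second, $h\leq m_g$, since $\mathrm{ch}_K$ always induces an injection $K_0(\cA)_K/_{\!\sim\mathrm{hom}}\hookrightarrow V^{\mathrm{F}_0}$, with equality iff $\mathrm{T}^p_{\mathrm{nc}}(\cA)$ holds (by definition); and third, $m_g\leq m_a$, with equality iff $\mathrm{F}_0$ is semisimple at the eigenvalue $1$.

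The implication $\mathrm{ST}_{\mathrm{nc}}(\cA)\Rightarrow\mathrm{T}^p_{\mathrm{nc}}(\cA)+\mathrm{D}_{\mathrm{nc}}(\cA)$ is then immediate: chaining the three inequalities,
\[
h\;\leq\;m_g\;\leq\;m_a\;=\;n\;\leq\;h,
\]
forces all four quantities to coincide, yielding $h=n$ (i.e.\ $\mathrm{D}_{\mathrm{nc}}$) and $h=m_g$ (i.e.\ $\mathrm{T}^p_{\mathrm{nc}}$). For the converse, assume $\mathrm{T}^p_{\mathrm{nc}}(\cA)+\mathrm{D}_{\mathrm{nc}}(\cA)$; the first two inequalities give $n=h=m_g$, so only the semisimplicity $m_a=m_g$ remains to be established. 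Under the resulting isomorphism $\mathrm{ch}_K\colon K_0(\cA)_K/_{\!\sim\mathrm{num}}\stackrel{\simeq}{\to} V^{\mathrm{F}_0}$, the non-degenerate Euler pairing on $K_0(\cA)_\bbQ/_{\!\sim\mathrm{num}}$ transports along $\mathrm{ch}$ to $\theta_0|_{V^{\mathrm{F}_0}}$ via the factorisation recalled above, so $\theta_0|_{V^{\mathrm{F}_0}}$ is non-degenerate.

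The main obstacle is then the following linear-algebra lemma, which I would prove separately and apply to $(V,\theta_0,\mathrm{F}_0)$: if $\theta$ is a non-degenerate bilinear form on a finite-dimensional $K$-vector space $V$ and $T$ is an invertible operator with $\theta(Tv,Tw)=\theta(v,w)$, then $\theta|_{V^T}$ is non-degenerate if and only if $T$ is semisimple at the eigenvalue $1$. To establish it I would restrict to the generalised eigenspace $V_{(1)}$ (which pairs non-degenerately with itself, since the $\theta$-adjoint of $T$ is $T^{-1}$, so distinct generalised eigenspaces pair trivially unless their eigenvalues are reciprocal), write $T|_{V_{(1)}}=I+N$ with $N$ nilpotent, expand $\theta((I+N)v,(I+N)w)=\theta(v,w)$ into the identity $\theta(Nv,w)=-\theta(v,Nw)-\theta(Nv,Nw)$, specialise to $w\in\ker N$ to get $\theta(Nv,w)=0$, and thus obtain $\mathrm{im}(N)\subseteq(\ker N)^{\perp}$ inside $V_{(1)}$; by rank-nullity and non-degeneracy of $\theta|_{V_{(1)}}$ this inclusion is an equality. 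Therefore $\theta|_{\ker N}$ is non-degenerate iff $\ker N\cap\mathrm{im}\,N=0$, equivalent to $\ker N=\ker N^2$, which for nilpotent $N$ forces $N=0$. Applied to our situation this yields $m_a=m_g$, and the argument is complete.
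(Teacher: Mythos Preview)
Your proof is correct and follows essentially the same strategy as the paper. The forward direction is identical (the same chain of inequalities $h\le m_g\le m_a=n\le h$), and for the converse both arguments reduce to showing that non-degeneracy of $\theta_0$ on the $\mathrm{F}_0$-invariants forces semisimplicity at the eigenvalue~$1$. The paper packages this last step slightly differently: rather than your self-contained lemma on $(V,\theta,T)$, it observes that the adjoint map $\theta_0^\natural\colon V^{\mathrm{F}_0}\to\Hom_K(V^{\mathrm{F}_0},K)$ factors through the canonical morphism $\epsilon\colon V^{\mathrm{F}_0}\to V_{\mathrm{F}_0}$ (by $\mathrm{F}_0$-invariance of $\theta_0$), so injectivity of $\theta_0^\natural$ forces injectivity of $\epsilon$, which is equivalent to $m_g=m_a$ (this is the paper's Lemma~\ref{lem:general}). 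Your argument via $\mathrm{im}(N)=(\ker N)^\perp$ inside the generalised $1$-eigenspace is the same fact unwound by hand. One small imprecision: the pairing that literally factors through $\mathrm{ch}\otimes\mathrm{ch}$ is the evaluation pairing $\psi\colon K_0(\cA^\op)_\bbQ\otimes K_0(\cA)_\bbQ\to\bbQ$ rather than the Euler pairing $\chi$ on $K_0(\cA)_\bbQ\otimes K_0(\cA)_\bbQ$; the paper invokes \cite[\S6]{Compositio} (and the identification $TP_\ast(\cA^\op)_{1/p}=TP_\ast(\cA)_{1/p}$) to pass between them, and you should make that step explicit.
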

\begin{proof}
We start by proving the implication $\mathrm{ST}_{\mathrm{nc}}(\cA) \Rightarrow  \mathrm{T}^p_{\mathrm{nc}}(\cA)+ \mathrm{D}_{\mathrm{nc}}(\cA)$. Recall from Remark \ref{rk:multiplicity} that if the conjecture $\mathrm{ST}_{\mathrm{nc}}(\cA)$ holds, then the algebraic multiplicity of the eigenvalue $1$ of $\mathrm{F}_0$ agrees with the dimension of the $\bbQ$-vector space $K_0(\cA)_\bbQ/_{\!\sim \mathrm{num}}$. Note also that the geometric multiplicity of the eigenvalue $1$ of $\mathrm{F}_0$, which is always less (or equal) than the algebraic multiplicity, agrees with the dimension of the $K$-vector space $TP_0(\cA)_{1/p}^{\mathrm{F}_0}$. In order to prove the conjecture $\mathrm{T}^p_{\mathrm{nc}}(\cA)$, we need then to show that the dimension of the $K$-vector space $TP_0(\cA)_{1/p}^{\mathrm{F}_0}$ is less (or equal) than the dimension of the $K$-vector space $K_0(\cA)_K/_{\!\sim \mathrm{hom}}$. This follows from the following (in)equalities: 
\begin{eqnarray*}
\mathrm{dim}_K TP_0(\cA)_{1/p}^{\mathrm{F}_0} & = & \mathrm{geometric}\,\mathrm{multiplicity}\,\mathrm{of}\,\mathrm{the}\,\mathrm{eigenvalue}\,1\,\mathrm{of}\,\mathrm{F}_0 \\
& \leq & \mathrm{algebraic}\,\mathrm{multiplicity}\,\mathrm{of}\,\mathrm{the}\,\mathrm{eigenvalue}\,1\,\mathrm{of}\,\mathrm{F}_0 \\
& = & \mathrm{dim}_\bbQ K_0(\cA)_\bbQ/_{\!\sim\mathrm{num}} \\
& \leq & \mathrm{dim}_\bbQ K_0(\cA)_\bbQ/_{\!\sim\mathrm{hom}} = \mathrm{dim}_K K_0(\cA)_K/_{\!\sim \mathrm{hom}}\,.
\end{eqnarray*}
Similarly, since $\mathrm{dim}_\bbQ K_0(\cA)_\bbQ/_{\!\sim\mathrm{num}} \leq \mathrm{dim}_\bbQ K_0(\cA)_\bbQ/_{\!\sim\mathrm{hom}}$, in order to prove the conjecture $\mathrm{D}_{\mathrm{nc}}(\cA)$, we need then to show that the dimension of the $\bbQ$-vector space $K_0(\cA)_\bbQ/_{\!\sim\mathrm{hom}}$ is less (or equal) than the dimension of the $\bbQ$-vector space $K_0(\cA)_\bbQ/_{\!\sim \mathrm{num}}$. This follows from the following (in)equalities:
\begin{eqnarray*}
\mathrm{dim}_\bbQ K_0(\cA)_\bbQ/_{\!\sim \mathrm{hom}} & = & \mathrm{dim}_K K_0(\cA)_K/_{\!\sim\mathrm{hom}} \\
& \leq & \mathrm{dim}_K TP_0(\cA)_{1/p}^{\mathrm{F}_0} \\
& = & \mathrm{geometric}\,\mathrm{multiplicity}\,\mathrm{of}\,\mathrm{the}\,\mathrm{eigenvalue}\,1\,\mathrm{of}\,\mathrm{F}_0 \\
& \leq &  \mathrm{algebraic}\,\mathrm{multiplicity}\,\mathrm{of}\,\mathrm{the}\,\mathrm{eigenvalue}\,1\,\mathrm{of}\,\mathrm{F}_0 \\
& = & \mathrm{dim}_\bbQ K_0(\cA)_\bbQ/_{\!\sim \mathrm{num}}\,.
\end{eqnarray*}

We now prove the implication $\mathrm{T}^p_{\mathrm{nc}}(\cA) + \mathrm{D}_{\mathrm{nc}}(\cA)  \Rightarrow \mathrm{ST}_{\mathrm{nc}}(\cA)$. Note that if both conjectures $\mathrm{T}^p_{\mathrm{nc}}(\cA)$ and $\mathrm{D}_{\mathrm{nc}}(\cA)$ hold, then the geometric multiplicity of the eigenvalue $1$ of $\mathrm{F}_0$ is equal to the dimension of the $\bbQ$-vector space $K_0(\cA)_\bbQ/_{\!\sim \mathrm{num}}$. Hence, in order to prove the conjecture $\mathrm{ST}_{\mathrm{nc}}(\cA)$, it suffices then to show that the geometric multiplicity of the eigenvalue $1$ of $\mathrm{F}_0$ agrees with the algebraic multiplicity of the eigenvalue $1$ of $\mathrm{F}_0$. Thanks to the general Lemma \ref{lem:general} below, this will follow from the injectivity of the canonical morphism $\epsilon\colon TP_0(\cA)_{1/p}^{\mathrm{F}_0} \to (TP_0(\cA)_{1/p})_{\mathrm{F}_0}$ (induced by the identity on $TP_0(\cA)_{1/p}$). Recall from \S\ref{sub:dg}-\S\ref{sub:NC} that $U(\cA)_\bbQ$ is a dualizable object of the symmetric monoidal category $\NChow(k)_\bbQ$ and that $U(\cA^\op)_\bbQ$ is the dual of $U(\cA)_\bbQ$. Consequently, by applying the functor $\Hom_{\NChow(k)_\bbQ}(U(k)_\bbQ,-)$ to the evaluation morphism $U(\cA^\op)_\bbQ \otimes U(\cA)_\bbQ \to U(k)_\bbQ$, we obtain (from the symmetric monoidal structure of $\NChow(k)_\bbQ$) a bilinear pairing $\psi\colon K_0(\cA^\op)_\bbQ \otimes_\bbQ K_0(\cA)_\bbQ \to \bbQ$. Note that since the $\bbQ$-linear functor \eqref{eq:TP2} is symmetric monoidal, we have the following commutative diagram
$$
\xymatrix{
TP_0(\cA^\op)_{1/p}^{\mathrm{F}_0} \otimes_K TP_0(\cA)^{\mathrm{F}_0}_{1/p} \ar[rr]^-{\theta_0} && K \\
K_0(\cA^\op)_K \otimes_K K_0(\cA)_K \ar[u]^-{\mathrm{ch}_K \otimes_K \mathrm{ch}_K} \ar[rr]_-{\psi_K}&& K \ar@{=}[u]\,,
}
$$
where $\theta_0$ stands for the perfect bilinear pairing of Proposition \ref{prop:pairings} and $\psi_K$ for the $K$-linearization of $\psi$. By adjunction, this yields the induced commutative diagram:
\begin{equation}\label{eq:square-key}
\xymatrix{
TP_0(\cA)_{1/p}^{\mathrm{F}_0} \ar[rr]^-{\theta_0^\natural} && \Hom_K(TP_0(\cA^\op)_{1/p}^{\mathrm{F}_0}, K)\ar[d]^-{\Hom_K(\mathrm{ch}_K,K)}\\
K_0(\cA)_K/_{\!\sim \mathrm{hom}} \ar[u]^-{\mathrm{ch}_K} \ar[rr]_-{\psi_K^\natural} && \Hom_K(K_0(\cA^\op)_K/_{\!\sim \mathrm{hom}}, K)\,.
}
\end{equation}
Thanks to the left-hand side commutative diagram of Proposition \ref{prop:pairings}, the morphism $\theta_0^\natural$ admits the following factorization:
$$ \theta_0^\natural\colon TP_0(\cA)_{1/p}^{\mathrm{F}_0} \stackrel{\epsilon}{\too} (TP_0(\cA)_{1/p})_{\mathrm{F}_0} \too \Hom_K(TP_0(\cA^\op)_{1/p}^{\mathrm{F}_0}, K)\,.$$
Using the fact that the left-hand side vertical morphism in \eqref{eq:square-key} is surjective (=conjecture $\mathrm{T}^p_{\mathrm{nc}}(\cA)$), we observe that in order to show that the canonical morphism $\epsilon$ is injective, it suffices then to show that the morphism $\psi_K^\natural$ is injective. As explained in \cite[\S6]{Compositio}, a Grothendieck class $\alpha \in K_0(\cA)_\bbQ$ is numerically trivial in the sense of \S\ref{sub:numerical} if and only if $\psi(\beta, \alpha)=0$ for every $\beta \in K_0(\cA^\op)_\bbQ$. In other words, the numerical Grothendieck group $K_0(\cA)_\bbQ/_{\!\sim\mathrm{num}}$ may be identified with the quotient of $K_0(\cA)_\bbQ/_{\!\sim\mathrm{hom}}$ by the kernel of $\theta_0^\natural$. Therefore, in order to prove that $\psi^\natural_K$ is injective, we can then consider the following commutative diagram:
$$
\xymatrix{
(K_0(\cA)_\bbQ/_{\!\sim\mathrm{hom}})_K \ar[d] \ar@{=}[r] & K_0(\cA)_K/_{\!\sim\mathrm{hom}}  \ar[d] \ar[r] & \Hom_K(K_0(\cA^\op)_K/_{\!\sim\mathrm{hom}},K) \\
(K_0(\cA)_\bbQ/_{\!\sim\mathrm{num}})_K  \ar@{=}[r]  & K_0(\cA)_K/_{\!\sim\mathrm{num}} \ar@/_1.2pc/[ur] &\,.
}
$$
Since the curved morphism is injective and the vertical morphism(s) is injective (=conjecture $\mathrm{D}_{\mathrm{nc}}(\cA)$), we hence conclude that the morphism $\psi^\natural_K$ is also injective. This finishes the proof of Theorem \ref{thm:alternative}.
\end{proof}
\begin{lemma}\label{lem:general}
Let $f\colon V \stackrel{\simeq}{\to} V$ be an automorphism of a finite-dimensional $K$-vector space $V$. Under these assumptions, the geometric multiplicity of the eigenvalue $1$ of $f$ agrees with the algebraic multiplicity of the eigenvalue $1$ of $f$ if and only if the canonical morphism $\epsilon\colon V^{f} \to V_{f}$ (induced by the identity on $V$) is injective.
\end{lemma}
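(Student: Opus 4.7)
Proof proposal for Lemma~\ref{lem:general}:

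The plan is to reformulate both sides of the equivalence in terms of the single endomorphism $N := f - \id \in \End_K(V)$. By definition, $V^f = \ker N$ and $V_f = V/\mathrm{Im}(N)$, and the canonical morphism $\epsilon$ is the composition $\ker N \hookrightarrow V \twoheadrightarrow V/\mathrm{Im}(N)$. Hence $\epsilon$ is injective if and only if $\ker N \cap \mathrm{Im}(N) = 0$.

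Next, I would invoke the generalized eigenspace decomposition, which gives an $f$-stable splitting $V = V_1 \oplus V'$, where $V_1 := \bigcup_{k \geq 1} \ker(N^k)$ is the generalized $1$-eigenspace and $V'$ is the direct sum of the remaining generalized eigenspaces. By construction, $N|_{V_1}$ is nilpotent and $N|_{V'}$ is invertible (since $f$ has no eigenvalue $1$ on $V'$). In particular $\dim_K V_1$ is the algebraic multiplicity of the eigenvalue $1$, while $\dim_K \ker N = \dim_K \ker(N|_{V_1})$ is the geometric multiplicity. Moreover $\mathrm{Im}(N) = \mathrm{Im}(N|_{V_1}) \oplus V'$, so
\[
\ker N \cap \mathrm{Im}(N) \;=\; \ker(N|_{V_1}) \cap \mathrm{Im}(N|_{V_1}).
\]

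The lemma thus reduces to the following well-known fact about a nilpotent operator $M$ on a finite-dimensional space $W$: one has $\ker M \cap \mathrm{Im}(M) = 0$ if and only if $M = 0$. The nontrivial direction is clear: if $M \neq 0$, pick any $v$ with $Mv \neq 0$ and let $j \geq 1$ be maximal with $M^{j}v \neq 0$; then $M^{j}v$ is a nonzero element of $\ker M \cap \mathrm{Im}(M)$. Applying this to $M = N|_{V_1}$, the condition $\ker N \cap \mathrm{Im}(N) = 0$ is equivalent to $N|_{V_1} = 0$, i.e., to $V_1 = \ker(N|_{V_1}) = V^{f}$. Comparing dimensions, this is precisely the statement that the algebraic and geometric multiplicities of the eigenvalue $1$ of $f$ coincide.

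No step presents a genuine obstacle; the whole argument is elementary linear algebra, and the only conceptually non-trivial ingredient is the (standard) existence of the generalized eigenspace decomposition over $K$, which follows from primary decomposition applied to the characteristic polynomial of $f$.
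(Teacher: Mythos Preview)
Your proof is correct and follows essentially the same line as the paper's: both identify $\epsilon$ with the composite $\ker N \hookrightarrow V \twoheadrightarrow V/\mathrm{Im}(N)$ for $N=\id-f$ (equivalently $f-\id$), so that injectivity of $\epsilon$ is the condition $\ker N \cap \mathrm{Im}(N)=0$, and both then relate this to the equality of algebraic and geometric multiplicities of the eigenvalue $1$.

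The only difference is in how that last equivalence is argued. You pass through the generalized eigenspace decomposition $V=V_1\oplus V'$ and reduce to the nilpotent case, showing that $\ker M \cap \mathrm{Im}(M)=0$ forces a nilpotent $M$ to vanish. The paper is slightly more economical: it observes directly that the multiplicities agree if and only if $\ker N=\ker N^2$, and that this in turn is equivalent to $\ker N \cap \mathrm{Im}(N)=0$ (if $w\in\ker N^2$ then $Nw\in\ker N\cap\mathrm{Im}(N)$, and conversely any nonzero element of $\ker N\cap\mathrm{Im}(N)$ has a preimage in $\ker N^2\setminus\ker N$). Your route is a bit heavier in that it invokes primary decomposition, but it has the mild advantage of making explicit why no hypothesis beyond finite-dimensionality is needed; both arguments are entirely elementary.
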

\begin{proof}
Note that the geometric multiplicity of the eigenvalue $1$ of $f$ agrees with the algebraic multiplicity of the eigenvalue $1$ of $f$ if and only if $\mathrm{Ker}(\id - f)= \mathrm{Ker}((\id - f)^{\circ 2})$. Hence, the proof follows from the fact that the latter condition is equivalent to the condition $\mathrm{Ker}(\id - f)\cap \mathrm{Im}(\id - f)=\emptyset$, i.e., to the injectivity of the canonical morphism $V^{f} \to V_{f}$. 
\end{proof}

\subsection{Proof of Theorem \ref{thm:strong}}
As mentioned in Remarks \ref{rk:equivalence1} and \ref{rk:equivalence2}, we have the following equivalences of conjectures 
\begin{eqnarray*}
 \mathrm{D}_{\mathrm{nc}}(\perf_\dg(X))\Leftrightarrow \mathrm{D}(X)  && \mathrm{T}^p_{\mathrm{nc}}(\perf_\dg(X))\Leftrightarrow \mathrm{T}^p(X)\,.
\end{eqnarray*} 
Therefore, by combining Theorem \ref{thm:alternative} (with $\cA=\perf_\dg(X)$) with the equivalence of conjectures $\mathrm{ST}(X) \Leftrightarrow \mathrm{T}^p(X) + \mathrm{D}(X)$ established in \cite[Thm.~1.11]{Milne} (consult also \cite[Thm.~2.9]{Tate-motives}), we obtain the sought equivalence $\mathrm{ST}_{\mathrm{nc}}(\perf_\dg(X)) \Leftrightarrow \mathrm{ST}(X)$. 

\begin{remark}[Direct proof]
A direct proof of the following implication of conjectures $\mathrm{ST}(X) \Rightarrow \mathrm{ST}_{\mathrm{nc}}(\perf_\dg(X))$ can be achieved as follows: thanks to Corollary \ref{cor:order} and to the factorization \eqref{eq:factorization1}, we have the following equality:
$$ \mathrm{ord}_{s=0}\zeta_{\mathrm{even}}(\perf_\dg(X);s) = \sum_{0\leq j \leq d} \mathrm{ord}_{s=j}\zeta(X;s)\,.$$
Therefore, since the numerical Grothendieck group $K_0(\perf_\dg(X))_\bbQ/_{\!\sim \mathrm{num}}$ is isomorphic to the direct sum $\bigoplus_{i=0}^d \cZ^i(X)_\bbQ/_{\!\sim \mathrm{num}}$ (consult \cite[Prop.~1.7(i)]{Rigidity}), we conclude that the conjecture $\mathrm{ST}_{\mathrm{nc}}(\perf_\dg(X))$ follows from $\mathrm{ST}(X)$. 
\end{remark}
\subsection{Alternative formulation}\label{sub:embedding}
The next result, of independent result, provides an alternative formulation of the noncommutative strong form of the Tate conjecture (when all smooth proper dg categories are considered simultaneously):
\begin{proposition}
The enriched topological periodic cyclic homology functor \eqref{eq:enriched} induces the following $K$-linear symmetric monoidal fully-faithful functor
\begin{eqnarray}\label{eq:enriched2}
\NNum(k)_K \too \mathrm{Aut}(K[v^{\pm1}])^\tau && U(\cA)_K \mapsto (TP_\ast(\cA)_{1/p}, \mathrm{F}_\ast)
\end{eqnarray}
if and only if the conjecture $\mathrm{ST}_{\mathrm{nc}}(\cA)$ holds for every smooth proper dg category $\cA$.
\end{proposition}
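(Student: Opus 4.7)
The plan is to identify the $\Hom$-groups on both sides of \eqref{eq:enriched2} and reduce the full-faithfulness condition, object by object, to the conjunction $\mathrm{T}^p_{\mathrm{nc}}(\cC)+\mathrm{D}_{\mathrm{nc}}(\cC)$, which by Theorem~\ref{thm:alternative} is equivalent to $\mathrm{ST}_{\mathrm{nc}}(\cC)$. First, from the $K$-linearization of the identification \eqref{eq:numerical}, the source $\Hom$-group reads
$$\Hom_{\NNum(k)_K}(U(\cA)_K, U(\cB)_K) \simeq K_0(\cA^{\op}\otimes_k^{\bf L}\cB)_K/_{\!\sim\mathrm{num}}.$$

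Second, I would compute the target $\Hom$-group via duality. Since the functor \eqref{eq:TP2} is symmetric monoidal and $U(\cA^{\op})_\bbQ$ is dual to $U(\cA)_\bbQ$ in the rigid category $\NChow(k)_\bbQ$, applying \eqref{eq:TP2} to the evaluation morphism and invoking the compatibility of the cyclotomic Frobenius with the monoidal structure (see~\S\ref{sub:cyclotomic}), Künneth induces an isomorphism of $\tau$-semilinear objects
$$(TP_*(\cA^{\op})_{1/p}, F_*^{\cA^\op})\otimes_{K[v^{\pm 1}]}(TP_*(\cB)_{1/p}, F_*^\cB)\isotoo (TP_*(\cA^\op \otimes_k^{\bf L}\cB)_{1/p}, F_*^{\cA^\op \otimes \cB}),$$
exhibiting $(TP_*(\cA^\op)_{1/p}, F_*^{\cA^\op})$ as the dual of $(TP_*(\cA)_{1/p}, F_*^\cA)$ in $\mathrm{Aut}(K[v^{\pm 1}])^\tau$. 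Passing to internal $\Hom$, taking $F_*$-fixed sections, and using the periodicity squares \eqref{eq:squares} to reduce to degree zero, the target $\Hom$-group becomes
$$\Hom_{\mathrm{Aut}(K[v^{\pm1}])^\tau}\big((TP_*(\cA)_{1/p}, F_*^\cA),(TP_*(\cB)_{1/p}, F_*^\cB)\big)\simeq TP_0(\cA^\op\otimes_k^{\bf L} \cB)_{1/p}^{F_0}.$$

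Third, writing $\cC:=\cA^\op \otimes_k^{\bf L}\cB$, the induced map on $\Hom$-groups is the $K$-linearized Chern character $\mathrm{ch}_K$, which (by \cite[Lem.~3.7]{HPD-Tate}) lands in the $F_0$-fixed part and tautologically factors through the canonical injection $K_0(\cC)_K/_{\!\sim\mathrm{hom}}\hookrightarrow TP_0(\cC)_{1/p}^{F_0}$. For \eqref{eq:enriched2} to be defined on morphisms, numerically trivial classes must map to zero, i.e., $\sim\mathrm{num}$ must coincide with $\sim\mathrm{hom}$ on $K_0(\cC)_K$: this is precisely conjecture $\mathrm{D}_{\mathrm{nc}}(\cC)$. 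Assuming $\mathrm{D}_{\mathrm{nc}}(\cC)$, injectivity of the induced map $\mathrm{ch}_K\colon K_0(\cC)_K/_{\!\sim\mathrm{num}}\to TP_0(\cC)_{1/p}^{F_0}$ is automatic and its surjectivity is conjecture $\mathrm{T}^p_{\mathrm{nc}}(\cC)$. Letting $\cA=k$, the category $\cC=\cB$ ranges over \emph{all} smooth proper dg categories; hence the full-faithfulness of \eqref{eq:enriched2} is equivalent to the validity of $\mathrm{D}_{\mathrm{nc}}(\cC)+\mathrm{T}^p_{\mathrm{nc}}(\cC)$ for every smooth proper dg category $\cC$, which by Theorem~\ref{thm:alternative} is equivalent to $\mathrm{ST}_{\mathrm{nc}}(\cC)$ for every such $\cC$.

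The main obstacle I anticipate is step two: carefully tracking the cyclotomic Frobenius through Künneth and rigidity to identify the $\Hom$-group in $\mathrm{Aut}(K[v^{\pm1}])^\tau$ with $TP_0(\cC)_{1/p}^{F_0}$. This relies on a careful use of the compatibility diagrams already exploited in Proposition~\ref{prop:pairings}, together with the periodicity squares \eqref{eq:squares}, to pass from the graded internal $\Hom$ in $\mathrm{mod}_{\bbZ}(K[v^{\pm 1}])$ down to the degree-zero $F_0$-fixed part.
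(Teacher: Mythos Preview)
Your proposal is correct and follows essentially the same approach as the paper: identify the target $\Hom$-group as $TP_0(\cA^\op\otimes_k^{\bf L}\cB)_{1/p}^{\mathrm{F}_0}$, split the statement into well-definedness of the descent to $\NNum(k)_K$ (which is $\mathrm{D}_{\mathrm{nc}}$ for all $\cC$) and fullness (which is $\mathrm{T}^p_{\mathrm{nc}}$ for all $\cC$), and then invoke Theorem~\ref{thm:alternative}. The paper states the target $\Hom$-identification as a fact ``by definition of the category $\mathrm{Aut}(K[v^{\pm1}])^\tau$'' rather than unwinding the duality as you do, and it explicitly notes that every object of $\NNum(k)_K$ is a direct summand of some $U(\cA)_K$ (so that checking on such objects suffices), a point you leave implicit but which is harmless since full-faithfulness passes to retracts.
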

\begin{proof} By construction, every object of $\NNum(k)_K$ is a direct summand of an object of the form $U(\cA)_K$ with $\cA$ a smooth proper $k$-linear dg category; see \cite[\S4.6]{book}. This implies that the functor \eqref{eq:enriched} descends to the category of noncommutative numerical motives $\NNum(k)_K$ if and only if the conjecture $\mathrm{D}_{\mathrm{nc}}(\cA)$ holds for every smooth proper dg category $\cA$. Given smooth proper dg categories $\cB$ and $\cC$, note that by definition of the category $\mathrm{Aut}(K[v^{\pm1}])^\tau$, we have an isomorphism: 
$$ \Hom_{\mathrm{Aut}(K[v^{\pm1}])^\tau}((TP_\ast(\cB)_{1/p}, \mathrm{F}_\ast),(TP_\ast(\cC)_{1/p}, \mathrm{F}_\ast)) \simeq TP_0(\cB^\op \otimes_k \cC)_{1/p}^{\mathrm{F}_0}\,.$$
This implies that the $K$-linear symmetric monoidal faithful functor \eqref{eq:enriched2} is moreover full if and only if the conjecture $\mathrm{T}_{\mathrm{nc}}^p(\cA)$ holds for every smooth proper dg category $\cA$. Consequently, the proof follows now from Theorem \ref{thm:alternative}.
\end{proof}

\section{Proof of Theorems \ref{thm:twist-new}, \ref{thm:hypersurface}-\ref{thm:orbifolds}, and \ref{thm:twist}-\ref{thm:finite}}
Let $N\!\!M \in \NChow(k)_\bbQ$ be a noncommutative Chow motive. Note that similarly to smooth proper dg categories, we can formulate the conjectures $\mathrm{W}_{\mathrm{nc}}(N\!\!M)$, $\mathrm{W}^l_{\mathrm{nc}}(N\!\!M)$, $\mathrm{ST}_{\mathrm{nc}}(N\!\!M)$, $\mathrm{D}_{\mathrm{nc}}(N\!\!M)$, and $\mathrm{T}^p_{\mathrm{nc}}(N\!\!M)$. Moreover, a proof similar to the one of Theorem \ref{thm:alternative} yields the equivalence $\mathrm{ST}_{\mathrm{nc}}(N\!\!M) \Leftrightarrow \mathrm{T}^p_{\mathrm{nc}}(N\!\!M) + \mathrm{D}_{\mathrm{nc}}(N\!\!M)$. 

In the particular case where $N\!\!M=U(\cA)_\bbQ$, with $\cA$ a smooth proper $k$-linear dg category, the aforementioned conjectures reduce to $\mathrm{W}_{\mathrm{nc}}(\cA)$, $\mathrm{W}^l_{\mathrm{nc}}(\cA)$, $\mathrm{ST}_{\mathrm{nc}}(\cA)$, $\mathrm{D}_{\mathrm{nc}}(\cA)$, and $\mathrm{T}^p_{\mathrm{nc}}(\cA)$, respectively.

\begin{proposition}\label{prop:properties}
The conjectures $\mathrm{W}_{\mathrm{nc}}(-)$, $\mathrm{W}^l_{\mathrm{nc}}(-)$, $\mathrm{ST}_{\mathrm{nc}}(-)$, $\mathrm{D}_{\mathrm{nc}}(-)$, $\mathrm{T}^p_{\mathrm{nc}}(-)$, are stable under direct sums and direct summands of noncommutative Chow motives.
\end{proposition}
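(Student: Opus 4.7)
The plan is to leverage the fact that all five conjectures are phrased in terms of additive functors on $\NChow(k)_\bbQ$ (or on its numerical quotient), so that stability under direct sums and direct summands will follow from the additivity of the underlying invariants. First I would record that the $\bbQ$-linear symmetric monoidal functor $TP_\ast(-)_{1/p}\colon \NChow(k)_\bbQ\to\mathrm{mod}_\bbZ(K[v^{\pm1}])$ from \eqref{eq:TP2} is additive, that the cyclotomic Frobenius is a natural automorphism of this functor, and that the functors $K_0(-)_\bbQ$, $K_0(-)_\bbQ/_{\!\sim\mathrm{hom}}$, and $K_0(-)_\bbQ/_{\!\sim\mathrm{num}}$ are all represented by $U(k)_\bbQ$ on $\NChow(k)_\bbQ$ (resp.\ on its numerical quotient), hence are also additive. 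Throughout, I use that $\NChow(k)_\bbQ$ is idempotent complete and additive, so any direct summand admits a complement and one reduces the direct summand case to the direct sum case.

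For $\mathrm{W}_{\mathrm{nc}}$ and $\mathrm{W}^l_{\mathrm{nc}}$, given a decomposition $N\!\!M = N\!\!M_1 \oplus N\!\!M_2$, additivity of $TP_0(-)_{1/p}$ together with naturality of $\mathrm{F}_0$ produces an $\mathrm{F}_0$-equivariant splitting $TP_0(N\!\!M)_{1/p} \simeq TP_0(N\!\!M_1)_{1/p}\oplus TP_0(N\!\!M_2)_{1/p}$. The multiset of eigenvalues of $\mathrm{F}_0$ on the left is then the disjoint union of the multisets on the summands, so the property ``every eigenvalue is algebraic and all its complex (resp.\ $l$-adic) conjugates have absolute value $1$'' holds for $N\!\!M$ iff it holds for both $N\!\!M_1$ and $N\!\!M_2$. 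The same argument applies verbatim to $\mathrm{F}_1$ with absolute value $q^{1/2}$. Direct summands are handled by passing to a complement, as described above.

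For $\mathrm{D}_{\mathrm{nc}}$, the additive functors $K_0(-)_\bbQ/_{\!\sim\mathrm{hom}}$ and $K_0(-)_\bbQ/_{\!\sim\mathrm{num}}$ are linked by a natural surjection; its kernel is therefore an additive functor, and vanishes on $N\!\!M$ iff it vanishes on each direct summand of $N\!\!M$. For $\mathrm{T}^p_{\mathrm{nc}}$, the Chern character $\mathrm{ch}_K\colon K_0(-)_K \to TP_0(-)_{1/p}^{\mathrm{F}_0}$ is a natural transformation between additive functors (taking fixed points under an automorphism commutes with finite direct sums), and surjectivity of such a natural transformation is manifestly preserved by direct sums and by retracts. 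Finally, the equivalence $\mathrm{ST}_{\mathrm{nc}}(N\!\!M)\Leftrightarrow\mathrm{T}^p_{\mathrm{nc}}(N\!\!M)+\mathrm{D}_{\mathrm{nc}}(N\!\!M)$ recalled at the start of this section reduces the stability of $\mathrm{ST}_{\mathrm{nc}}$ to the two previous cases.

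The only potential obstacle I anticipate is essentially cosmetic bookkeeping: checking that the Chern character $\mathrm{ch}_K$ and the cyclotomic Frobenius $\mathrm{F}_\ast$ really do extend as natural transformations defined on all of $\NChow(k)_\bbQ$, rather than merely on the subcategory of motives of the form $U(\cA)_\bbQ$ with $\cA$ a smooth proper dg category. This is already built into the constructions recalled in \S\ref{sub:NC}, \S\ref{sub:numerical}, \S\ref{sub:homological}, and \S\ref{sub:cyclotomic}, so no substantive new input is required and the entire argument is formal.
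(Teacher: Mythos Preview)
Your proposal is correct and follows essentially the same approach as the paper: the paper's proof simply declares stability under direct sums ``clear'', declares stability under direct summands ``clear'' for $\mathrm{W}_{\mathrm{nc}}$, $\mathrm{W}^l_{\mathrm{nc}}$, $\mathrm{D}_{\mathrm{nc}}$, and $\mathrm{T}^p_{\mathrm{nc}}$, and then handles the direct summand case for $\mathrm{ST}_{\mathrm{nc}}$ via the equivalence $\mathrm{ST}_{\mathrm{nc}}(N\!\!M)\Leftrightarrow\mathrm{T}^p_{\mathrm{nc}}(N\!\!M)+\mathrm{D}_{\mathrm{nc}}(N\!\!M)$, exactly as you do. Your write-up merely spells out the additivity and naturality arguments that the paper leaves implicit.
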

\begin{proof}
The stability under direct sums is clear. The stability under direct summands is also clear for the noncommutative Weil conjecture(s), the noncommutative standard conjecture of type $D$, and the noncommutative $p$-version of the Tate conjecture. In what regards the noncommutative strong form of the Tate conjecture, it follows from the equivalence $\mathrm{ST}_{\mathrm{nc}}(N\!\!M) \Leftrightarrow \mathrm{T}^p_{\mathrm{nc}}(N\!\!M) + \mathrm{D}_{\mathrm{nc}}(N\!\!M)$.
\end{proof}
\begin{corollary}\label{cor:sums}
Given noncommutative Chow motives $N\!\!M, N\!\!M'\in \NChow(k)_\bbQ$, we have the equivalence of conjectures $\mathrm{W}_{\mathrm{nc}}(N\!\!M\oplus N\!\!M') \Leftrightarrow \mathrm{W}_{\mathrm{nc}}(N\!\!M) + \mathrm{W}_{\mathrm{nc}}(N\!\!M')$, the equivalence of conjectures $\mathrm{W}^l_{\mathrm{nc}}(N\!\!M\oplus N\!\!M') \Leftrightarrow \mathrm{W}^l_{\mathrm{nc}}(N\!\!M) + \mathrm{W}^l_{\mathrm{nc}}(N\!\!M')$, and the equivalence of conjectures $\mathrm{ST}_{\mathrm{nc}}(N\!\!M\oplus N\!\!M') \Leftrightarrow \mathrm{ST}_{\mathrm{nc}}(N\!\!M) + \mathrm{ST}_{\mathrm{nc}}(N\!\!M')$.
\end{corollary}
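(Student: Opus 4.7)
The plan is to deduce Corollary \ref{cor:sums} as an immediate consequence of Proposition \ref{prop:properties}. Indeed, each equivalence in the corollary breaks into two implications, and each of these two implications corresponds to one of the two stability properties (stability under direct sums and stability under direct summands) asserted in the proposition.

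More concretely, for the $(\Leftarrow)$ direction of each equivalence, I would simply invoke stability under direct sums: if the conjecture holds for $N\!\!M$ and for $N\!\!M'$ individually, then it holds for $N\!\!M \oplus N\!\!M'$. For the $(\Rightarrow)$ direction, the point is that, in the idempotent complete additive category $\NChow(k)_\bbQ$, both $N\!\!M$ and $N\!\!M'$ are direct summands of $N\!\!M \oplus N\!\!M'$ via the canonical inclusions and projections. Hence, stability under direct summands yields that the conjecture for $N\!\!M \oplus N\!\!M'$ implies the conjecture for each of $N\!\!M$ and $N\!\!M'$, and thus the sum $\mathrm{W}_{\mathrm{nc}}(N\!\!M) + \mathrm{W}_{\mathrm{nc}}(N\!\!M')$ (and similarly for the other two conjectures).

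There is no substantial obstacle here: the corollary is essentially a formal repackaging of Proposition \ref{prop:properties}. The only point requiring any care is to observe that the additive functors involved in the formulation of each conjecture (namely $TP_\ast(-)_{1/p}$ equipped with the cyclotomic Frobenius $\mathrm{F}_\ast$, together with $K_0(-)_\bbQ/_{\!\sim\mathrm{num}}$) are compatible with direct sum decompositions in $\NChow(k)_\bbQ$, so that eigenvalues combine as unions (with multiplicities), and dimensions of numerical Grothendieck groups add; but this is already implicit in the statement of Proposition \ref{prop:properties}, so nothing further needs to be verified at the level of Corollary \ref{cor:sums}.
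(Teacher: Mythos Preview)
Your proposal is correct and matches the paper's approach: the corollary is stated immediately after Proposition \ref{prop:properties} with no separate proof, precisely because it follows formally from stability under direct sums (for $\Leftarrow$) and direct summands (for $\Rightarrow$), exactly as you describe.
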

\begin{example}[Semi-orthogonal decompositions]\label{ex:semi}
Let $\cB, \cC \subseteq \cA$ be smooth proper $k$-linear dg categories inducing a semi-orthogonal decomposition of triangulated categories $\dgHo(\cA)=\langle \dgHo(\cB), \dgHo(\cC)\rangle$ in the sense of Bondal-Orlov \cite{BO}. As proved in \cite[Prop.~2.2]{book}, the inclusions $\cB, \cC \subseteq \cA$ give rise to an isomorphism of noncommutative Chow motives $U(\cA)_\bbQ \simeq U(\cB)_\bbQ \oplus U(\cC)_\bbQ$. Hence, Corollary \ref{cor:sums} yields the equivalence $\mathrm{W}_{\mathrm{nc}}(\cA) \Leftrightarrow \mathrm{W}_{\mathrm{nc}}(\cB) + \mathrm{W}_{\mathrm{nc}}(\cC)$, the equivalence $\mathrm{W}^l_{\mathrm{nc}}(\cA) \Leftrightarrow \mathrm{W}^l_{\mathrm{nc}}(\cB) + \mathrm{W}^l_{\mathrm{nc}}(\cC)$, and also the equivalence $\mathrm{ST}_{\mathrm{nc}}(\cA) \Leftrightarrow \mathrm{ST}_{\mathrm{nc}}(\cB) + \mathrm{ST}_{\mathrm{nc}}(\cC)$.
\end{example}

\subsection*{Proof of Theorem \ref{thm:twist-new}}
As proved in \cite[Thm.~2.1]{Azumaya}, we have an isomorphism of noncommutative Chow motives $U(\perf_\dg(X))_\bbQ \simeq U(\perf_\dg(\cX;\cF))_\bbQ$. Hence, we have the equivalence of conjectures $\mathrm{W}_{\mathrm{nc}}(\perf_\dg(X)) \Leftrightarrow\mathrm{W}_{\mathrm{nc}}(\perf_\dg(X;\cF))$, the equivalence of conjectures $\mathrm{W}^l_{\mathrm{nc}}(\perf_\dg(X)) \Leftrightarrow\mathrm{W}^l_{\mathrm{nc}}(\perf_\dg(X;\cF))$, and the equivalence of conjectures $\mathrm{ST}_{\mathrm{nc}}(\perf_\dg(X))\Leftrightarrow \mathrm{ST}_{\mathrm{nc}}(\perf_\dg(X;\cF))$. Therefore, the proof follows from Theorems \ref{thm:main}, \ref{thm:new} (plus Remark \ref{rk:new}), and \ref{thm:strong}.

\subsection*{Proof of Theorem \ref{thm:hypersurface}}
The (noncommutative) Weil conjecture(s) as well as the (noncommutative) strong form of the Tate conjecture hold for $\mathrm{Spec}(k)$. Consequently, an iterated application of Example \ref{ex:semi} to the semi-orthogonal decomposition \eqref{eq:semi-hyper} yields the equivalence of conjectures $\mathrm{W}_{\mathrm{nc}}(\perf_\dg(X)) \Leftrightarrow \mathrm{W}_{\mathrm{nc}}(\cT_\dg(X))$, the equivalence of conjectures $\mathrm{W}^l_{\mathrm{nc}}(\perf_\dg(X)) \Leftrightarrow \mathrm{W}^l_{\mathrm{nc}}(\cT_\dg(X))$, and the equivalence of conjectures $\mathrm{ST}_{\mathrm{nc}}(\perf_\dg(X)) \Leftrightarrow \mathrm{ST}_{\mathrm{nc}}(\cT_\dg(X))$. Therefore, the proof follows from Theorems \ref{thm:main}, \ref{thm:new} (plus Remark \ref{rk:new}), and \ref{thm:strong}.

\subsection*{Proof of Theorem \ref{thm:glue}}
Up to Morita equivalence, the dg category $X \ominus_{\mathrm{B}}Y$ admits a semi-orthogonal decomposition whose components are (Fourier-Mukai) equivalent to $\perf(X)$ and $\perf(Y)$. Consequently, the proof of Theorem \ref{thm:glue} follows from the combination of Example \ref{ex:semi} with Theorems \ref{thm:main}, \ref{thm:new} (plus Remark \ref{rk:new}),~and~\ref{thm:strong}.

\subsection*{Proof of Theorem \ref{thm:root}}
As proved by Ishii-Ueda in \cite[Thm.~1.6]{IU}, whenever the zero locus $D \hookrightarrow X$ of $\varsigma$ is smooth, we have a semi-orthogonal decomposition
\begin{equation}\label{eq:semi-root}
\perf(\cX)= \langle \perf(D)_{n-1}, \ldots, \perf(D)_1, f^\ast(\perf(X))\rangle\,,
\end{equation}
where all the categories $\perf(D)_i$ are (Fourier-Mukai) equivalent to $\perf(D)$ and $f^\ast(\perf(X))$ is (Fourier-Mukai) equivalent to $\perf(X)$. Therefore, making use of Theorems \ref{thm:main}, \ref{thm:new} (plus Remark \ref{rk:new}), and \ref{thm:strong}, the proof follows from an iterated application of Example \ref{ex:semi} to the semi-orthogonal decomposition \eqref{eq:semi-root}.

\subsection*{Proof of Theorem \ref{thm:orbifolds}}
Let $\varphi$ be the set of all cyclic subgroups of $G$ and $\varphi_{\!/\!\sim}$ a set of representatives of conjugacy classes in $\varphi$. Since the category $\NChow(k)_\bbQ$ is $\bbQ$-linear, it follows from \cite[Thm.~1.1 and Rk.~1.3(iii)]{Orbifold} that the noncommutative Chow motive $U(\perf_\dg(\cX))_\bbQ$ is a direct summand of the following direct sum:
\begin{equation}\label{eq:direct}
\bigoplus_{\sigma \in \varphi_{\!/\!\sim}} U(\perf_\dg(X^\sigma \times \mathrm{Spec}(k[\sigma])))_\bbQ\,.
\end{equation}
Under the assumption $n|(q-1)$, the same holds with $X^\sigma \times \mathrm{Spec}(k[\sigma])$ replaced by $X^\sigma$; consult \cite[Cor.~1.5(ii)]{Orbifold}. Recall from Proposition \ref{prop:properties} that the noncommutative Weil conjecture(s) and the noncommutative strong form of the Tate conjecture are stable under direct sums and direct summands of noncommutative Chow motives. Therefore, the proof follows from the combination of \eqref{eq:direct} (under the assumption $n|(q-1)$, replace $X^\sigma \times \mathrm{Spec}(k[\sigma])$ by $X^\sigma$) with Theorems \ref{thm:main}, \ref{thm:new} (plus Remark \ref{rk:new}), and \ref{thm:strong}.

\subsection*{Proof of Theorem \ref{thm:twist}}
The proof of Theorem \ref{thm:twist} is similar to the proof of Theorem \ref{thm:orbifolds} (in the case where $n|(q-1)$). Simply, replace $\perf_\dg(\cX)$ by $\perf_\dg(\cX;\cF)$, $\perf_\dg(X^\sigma)$ by $\perf_\dg(Y_\sigma)$, and \cite[Cor.~1.5(ii)]{Orbifold} by \cite[Cor.~1.28(ii)]{Orbifold}. 

\subsection*{Proof of Theorem \ref{thm:dg}}
Consider the finite-dimensional $k$-algebra $\dgHo(A)$ and the associated semi-simple $k$-algebra $B:=\dgHo(A)/\mathrm{Jac}(\dgHo(A))$, where $\mathrm{Jac}(\dgHo(A))$ stands for the Jacobson radical of $\dgHo(A)$. Let us write $V_1, \ldots, V_n$ for the simple (right) $B$-modules and $D_1:=\mathrm{End}_{B}(V_1), \ldots, D_n:=\mathrm{End}_{B}(V_n)$ for the associated division $k$-algebras. Note that since $k$ is finite, the division $k$-algebras $D_1, \ldots, D_n$ are just finite field extensions $\kappa_1, \ldots, \kappa_n$ of $k$. Under these notations, we have isomorphisms of noncommutative Chow motives
\begin{equation}\label{eq:iso-finite}
U(A)_\bbQ \simeq U(B)_\bbQ \simeq U(\kappa_1)_\bbQ \oplus \cdots \oplus U(\kappa_n)_\bbQ\,,
\end{equation}
where the first isomorphism was established by Raedschelders-Stevenson in \cite[Thm.~3.5]{RS} and the second isomorphism follows from the fact that $B$ is semi-simple. By combining the isomorphism \eqref{eq:iso-finite} with Corollary \ref{cor:sums}, we obtain the equivalence $\mathrm{W}_{\mathrm{nc}}(A) \Leftrightarrow \mathrm{W}_{\mathrm{nc}}(\kappa_1) + \cdots +  \mathrm{W}_{\mathrm{nc}}(\kappa_n)$, the equivalence $\mathrm{W}^l_{\mathrm{nc}}(A) \Leftrightarrow \mathrm{W}^l_{\mathrm{nc}}(\kappa_1) + \cdots +  \mathrm{W}^l_{\mathrm{nc}}(\kappa_n)$, and the equivalence $\mathrm{ST}_{\mathrm{nc}}(A) \Leftrightarrow \mathrm{ST}_{\mathrm{nc}}(\kappa_1) + \cdots + \mathrm{ST}_{\mathrm{nc}}(\kappa_n)$. Consequently, since the conjectures $\mathrm{W}_{\mathrm{nc}}(\kappa_i)$, $\mathrm{W}^l_{\mathrm{nc}}(\kappa_i)$, and $\mathrm{ST}_{\mathrm{nc}}(\kappa_i)$, hold, the conjectures $\mathrm{W}_{\mathrm{nc}}(A)$, $\mathrm{W}^l_{\mathrm{nc}}(A)$, and $\mathrm{ST}_{\mathrm{nc}}(A)$, also hold.

\subsection*{Proof of Theorem \ref{thm:finite}}
Following Orlov \cite[\S2.3]{Orlov1}, let us write $\mathrm{Aus}(A)$ for the smooth proper Auslander dg $k$-algebra associated to $A$ (Orlov used a different notation). As proved in \cite[Thms.~2.18-2.19]{Orlov1}, we have a semi-orthogonal decomposition
$$ \perf(\mathrm{Aus}(A))= \langle \perf(D_1), \ldots, \perf(D_n)\rangle\,,$$
where $D_i$ is a division $k$-algebra; note that since $k$ is finite, the division $k$-algebras $D_1, \ldots, D_n$ are just finite field extensions $\kappa_1, \ldots, \kappa_n$ of $k$. Moreover, as also proved in {\em loc. cit.}, the category $\mathrm{perf}(A)$ can be embedded (using a Fourier-Mukai functor) as an admissible triangulated subcategory of $\perf(\mathrm{Aus}(A))$. This implies that the noncommutative Chow motive $U(A)_\bbQ$ is a direct summand of the direct sum $\bigoplus_{i=1}^n U(\kappa_i)$. Consequently, since the conjectures $\mathrm{W}_{\mathrm{nc}}(\kappa_i)$, $\mathrm{W}^l_{\mathrm{nc}}(\kappa_i)$, and $\mathrm{ST}_{\mathrm{nc}}(\kappa_i)$, hold, we conclude from Proposition \ref{prop:properties} that the conjectures $\mathrm{W}_{\mathrm{nc}}(A)$, $\mathrm{W}^l_{\mathrm{nc}}(A)$, and $\mathrm{ST}_{\mathrm{nc}}(A)$, also hold.

\section{Proof of Theorem \ref{thm:quadrics}}
As proved in \cite[Thm.~5.5]{Kuznetsov-Quadrics} (see also \cite[Thm.~2.3.7]{Auel}), we have the following semi-orthogonal decomposition 
\begin{equation*}\label{eq:Kuznetsov}
\perf(X)=\langle \perf(\bbP^1; \cC l_0(q)), \cO_X(1), \ldots, \cO_X(n-4)\rangle\,,
\end{equation*}
where $\cC l_0(q)$ stands for the sheaf of even parts of the Clifford algebra associated to the flat quadric fibration $f\colon Q \to \bbP^1$. Consequently, since the (noncommutative) Weil conjecture as well as the (noncommutative) strong form of the Tate conjecture hold for $\mathrm{Spec}(k)$, an iterated application of Example \ref{ex:semi} yields the following equivalences of conjectures:
\begin{eqnarray}
\mathrm{W}_{\mathrm{nc}}(\perf_\dg(X)) & \Leftrightarrow & \mathrm{W}_{\mathrm{nc}}(\perf_\dg(\bbP^1;\cC l_0(q))) \label{eq:Clifford1}\\
\mathrm{ST}_{\mathrm{nc}}(\perf_\dg(X)) & \Leftrightarrow & \mathrm{ST}_{\mathrm{nc}}(\perf_\dg(\bbP^1;\cC l_0(q))) \,.\label{eq:Clifford2}
\end{eqnarray}
We start by proving item (i). Following \cite[\S3.5]{Kuznetsov-Quadrics} (see also \cite[\S1.6]{Auel}), let $\cZ$ be the center of $\cC l_0(q)$ and $\mathrm{Spec}(\cZ)=:\widetilde{\bbP}^1 \to \bbP^1$ the discriminant cover of $\bbP^1$. As explained in {\em loc. cit.}, $\widetilde{\bbP}^1 \to \bbP^1$ is a $2$-fold cover which is ramified over the (finite) set $D$ of critical values of $f$. Moreover, since $D$ is smooth, $\widetilde{\bbP}^1$ is also smooth. Let us write $\cF$ for the sheaf $\cC l_0(q)$ considered as a sheaf of noncommutative algebras over $\widetilde{\bbP}^1$. As proved in {\em loc. cit.}, since by assumption all the fibers of $f\colon Q \to \bbP^1$ have corank $\leq 1$, $\cF$ is a sheaf of Azumaya algebras over $\widetilde{\bbP}^1$. Moreover, the category $\perf(\bbP^1; \cC l_0(q))$ is (Fourier-Mukai) equivalent to $\perf(\widetilde{\bbP}^1; \cF)$. Note that since the Brauer group of every smooth curve over a finite field $k$ is trivial, the latter category reduces to $\perf(\widetilde{\bbP}^1)$. Therefore, thanks to Theorem \ref{thm:main}, resp. Theorem \ref{thm:strong}, the equivalence \eqref{eq:Clifford1}, resp. \eqref{eq:Clifford2}, reduces to $\mathrm{W}(X)\Leftrightarrow \mathrm{W}(\widetilde{\bbP}^1)$, resp. $\mathrm{ST}(X)\Leftrightarrow \mathrm{ST}(\widetilde{\bbP}^1)$. Finally, since $\widetilde{\bbP}^1$ is a curve, we hence conclude that the conjectures $\mathrm{W}(X)$ and $\mathrm{ST}(X)$ hold. 

We now prove item (ii). Note first that $1/2 \in k$. Following \cite[\S3.6]{Kuznetsov-Quadrics} (see also \cite[\S1.7]{Auel}), let $\widehat{\bbP}^1$ the discriminant stack associated to the flat quadric fibration $f \colon Q \to \bbP^1$. As explained in {\em loc. cit.}, since $1/2\in k$, $\widehat{\bbP}^1$ is a square root stack. Moreover, the underlying $k$-scheme of $\widehat{\bbP}^1$ is $\bbP^1$. Therefore, it follows from Theorem \ref{thm:root} that the conjectures $\mathrm{W}_{\mathrm{nc}}(\perf_\dg(\widehat{\bbP}^1))$ and $\mathrm{ST}_{\mathrm{nc}}(\perf_\dg(\widehat{\bbP}^1))$ hold. Let us write $\cF$ for the sheaf $\cC l_0(q)$ considered as a sheaf of noncommutative algebras over $\widehat{\bbP}^1$. As proved in {\em loc. cit.}, since by assumption all the fibers of $f\colon Q \to \bbP^1$ have corank $\leq 1$, $\cF$ is a sheaf of Azumaya algebras over $\widehat{\bbP}^1$. Moreover, the category $\perf(\bbP^1; \cC l_0(q))$ is (Fourier-Mukai) equivalent to $\perf(\widehat{\bbP}^1; \cF)$. Note that since the Brauer group of every smooth curve over a finite field $k$ is trivial, the latter category reduces to $\perf(\widehat{\bbP}^1)$. Hence, thanks to Theorem \ref{thm:main}, resp. Theorem \ref{thm:strong}, the equivalence \eqref{eq:Clifford1}, resp. \eqref{eq:Clifford2}, reduces to $\mathrm{W}(X) \Leftrightarrow \mathrm{W}_{\mathrm{nc}}(\perf_\dg(\widehat{\bbP}^1))$, resp. $\mathrm{ST}(X) \Leftrightarrow \mathrm{ST}_{\mathrm{nc}}(\perf_\dg(\widehat{\bbP}^1))$. Finally, since $\mathrm{W}_{\mathrm{nc}}(\perf_\dg(\widehat{\bbP}^1))$ and $\mathrm{ST}_{\mathrm{nc}}(\perf_\dg(\widehat{\bbP}^1))$ hold, we hence conclude that the conjectures $\mathrm{W}(X)$ and $\mathrm{ST}(X)$ also hold.
\section{Proof of Theorem \ref{thm:HPD-duality}}

As proved in \cite[Cor.~3.7]{HPD}, the following holds:
\begin{itemize}
\item[(a)] When $\mathrm{dim}(L)< d_2r$, the category $\perf(X_L)$ admits a semi-orthogonal decomposition with one component (Fourier-Mukai) equivalent to $\perf(Y_L)$ and with $(d_2r - \mathrm{dim}(L))\binom{d_1}{r}$ exceptional objects.
\item[(b)] When $\mathrm{dim}(L)=d_2r$, the category $\perf(X_L)$ is (Fourier-Mukai) equivalent to the category $\perf(Y_L)$.
\item[(c)] When $\mathrm{dim}(L)> d_2r$, the category $\perf(Y_L)$ admits a semi-orthogonal decomposition with one component (Fourier-Mukai) equivalent to $\perf(X_L)$ and with $(\mathrm{dim}(L)-d_2r)\binom{d_1}{r}$ exceptional objects.
\end{itemize}
Consequently, since the (noncommutative) Weil conjecture as well as the (noncommutative) strong form of the Tate conjecture hold for $\mathrm{Spec}(k)$, by combining Theorems \ref{thm:main} and \ref{thm:strong} with an iterated application of Example \ref{ex:semi}, we hence conclude from (a)-(c) that $\mathrm{W}(X_L) \Leftrightarrow \mathrm{W}(Y_L)$ and $\mathrm{ST}(X_L) \Leftrightarrow \mathrm{ST}(Y_L)$. Therefore, the proof of item (i), resp. item (ii), follows from the fact that $\mathrm{dim}(X_L)=r(d_1+ d_2 -r) - 1 - \mathrm{dim}(L)$, resp. $\mathrm{dim}(Y_L)=r(d_1 - d_2 -r)-1 + \mathrm{dim}(L)$.

\section{Proof of Theorem \ref{thm:main-L-functions}}\label{proof:main-L-functions}
Given a prime number $p\neq p_1, \ldots, p_m$, let us write $\chi_{(0,p)}:=\mathrm{dim}_K TP_0(\mathfrak{A}_p)_{1/p}$ and $\chi_{(1,p)}:=\mathrm{dim}_K TP_1(\mathfrak{A}_p)_{1/p}$. In the same vein, given $i=1, \ldots, m$, let us write $\chi_{(0,p_i)}:=\mathrm{dim}_K TP_0(\mathfrak{A}_{p_i})_{1/p_i}$ and $\chi_{(1,p_i)}:=\mathrm{dim}_K TP_1(\mathfrak{A}_{p_i})_{1/{p_i}}$. The next result, of independent interest, provides a uniform upper bound for these dimensions.
\begin{proposition}\label{prop:bound-11}
There exists an integer $C_0\gg 0$, resp. $C_1\gg 0$, such that $\chi_{(0,p)}\leq C_0$ and $\chi_{(0,p_i)}\leq C_0$, resp. $\chi_{(1,p)}\leq C_1$ and $\chi_{(1,p_i)}\leq C_1$, for every prime $p\neq p_1, \ldots, p_m$ and $i=1, \ldots, m$.
\end{proposition}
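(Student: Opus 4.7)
I will reduce the sought uniform bounds on $\chi_{(0,p)}$ and $\chi_{(1,p)}$ to uniform bounds on the dimensions of the Hochschild homology groups of $\mathfrak{A}_p$ and of $\mathfrak{A}_{p_i}$, and then obtain the latter by a classical spreading-out argument over the Dedekind domains $R := \bbZ[1/p_1, \ldots, 1/p_m]$ and $\bbZ_{(p_i)}$.

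First, since $\mathfrak{A}$ is smooth proper as an $R$-linear dg category, its Hochschild complex $HH(\mathfrak{A}/R)$ is a perfect complex of $R$-modules, so each $HH_j(\mathfrak{A}/R)$ is a finitely generated $R$-module and is nonzero for only boundedly many $j$. Because $R$ is a Dedekind domain, the structure theorem gives a decomposition $HH_j(\mathfrak{A}/R) \simeq R^{r_j}\oplus T_j$ with $T_j$ a finite torsion module. Derived base change along $R \to \bbF_p$ involves only $\mathrm{Tor}_0^R$ and $\mathrm{Tor}_1^R$ (since $R$ has global dimension $1$), producing a uniform bound $\dim_{\bbF_p} HH_j(\mathfrak{A}_p)\leq c_j$ valid for every prime $p \neq p_1,\ldots,p_m$, where the integer $c_j$ depends only on the ranks and torsion lengths of the $R$-modules $HH_\ast(\mathfrak{A}/R)$. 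Running the same argument on each of the finitely many $\bbZ_{(p_i)}$-linear dg categories $\mathfrak{A}_i$ bounds $\dim_{\bbF_{p_i}} HH_j(\mathfrak{A}_{p_i})$ as well; after enlarging $c_j$ to dominate these finitely many further contributions, we obtain integers $c_j$, nonzero in only boundedly many degrees $j$, that dominate $\dim_{\bbF_p} HH_j(\mathfrak{A}_p)$ and $\dim_{\bbF_{p_i}} HH_j(\mathfrak{A}_{p_i})$ uniformly in the prime.

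Second, for any smooth proper dg category $\mathfrak{B}$ over a perfect field $k$ of characteristic $p$, the Bhatt--Morrow--Scholze motivic (Nygaard) filtration on $TP(\mathfrak{B})$ is finite, and its graded pieces, after inverting $p$, are controlled by the ordinary Hochschild homology of $\mathfrak{B}/k$ (in the spirit of Kaledin's noncommutative Hodge-to-de~Rham degeneration and the work of Antieau--Mathew--Morrow--Nikolaus). This yields the dimensional inequalities
$$\dim_K TP_0(\mathfrak{B})_{1/p}\;\leq\;\sum_{j\,\mathrm{even}}\dim_k HH_j(\mathfrak{B}/k) \quad\text{and}\quad \dim_K TP_1(\mathfrak{B})_{1/p}\;\leq\;\sum_{j\,\mathrm{odd}}\dim_k HH_j(\mathfrak{B}/k).$$
Applying these to $\mathfrak{B}=\mathfrak{A}_p$ and $\mathfrak{B}=\mathfrak{A}_{p_i}$, and combining with the previous step, gives $\chi_{(0,p)},\,\chi_{(0,p_i)} \leq \sum_{j\,\mathrm{even}} c_j =: C_0$ and $\chi_{(1,p)},\,\chi_{(1,p_i)} \leq \sum_{j\,\mathrm{odd}} c_j =: C_1$ for every prime under consideration, as required.

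The principal obstacle is the $TP$-to-$HH$ dimensional inequality in the second step: the Tate spectral sequence abutting to $TP(\mathfrak{B})$ lives over \emph{topological} rather than ordinary Hochschild homology at its $E_2$-page, so the clean upper bound in terms of ordinary $HH_\ast(\mathfrak{B}/k)$ requires invoking the finite-length motivic filtration of Bhatt--Morrow--Scholze together with the finiteness results of Antieau--Mathew--Morrow--Nikolaus after inverting $p$. Once this analytic input is in hand, the residual argument is elementary homological algebra over a Dedekind domain.
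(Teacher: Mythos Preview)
Your proposal is correct and follows essentially the same two-step architecture as the paper: first, spread out to bound $\dim_{\bbF_p} HH_j(\mathfrak{A}_p)$ uniformly in $p$ (the paper does this via the K\"unneth short exact sequence, you via the Dedekind structure theorem --- equivalent bookkeeping); second, bound $\dim_K TP_n(\mathfrak{A}_p)_{1/p}$ by the Hochschild dimensions.

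The only noteworthy difference is in how the second step is justified. The paper routes explicitly through $HP$: it uses the Hodge--to--de~Rham spectral sequence to get $\dim_{\bbF_p} HP_n(\mathfrak{A}_p) \leq \sum_j \dim_{\bbF_p} HH_j(\mathfrak{A}_p)$, then the isomorphism $\pi_\ast(TP(\mathfrak{A}_p)/p)\simeq HP_\ast(\mathfrak{A}_p)$ of Antieau--Mathew--Nikolaus together with a Nakayama-type argument to pass from $TP/p$ to $TP_{1/p}$. Your route via the Nygaard/motivic filtration lives in the same circle of ideas but is stated more loosely; the claim that its graded pieces ``are controlled by ordinary $HH$ after inverting $p$'' is not literally how that filtration is usually described, and the paper's chain $TP_{1/p}\!\to TP/p\!\to HP\!\to HH$ is both more elementary and more precisely referenced. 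Also, for the finitely many bad primes $p_i$ the paper simply takes the maximum over those finitely many values rather than rerunning the argument over $\bbZ_{(p_i)}$ --- either works.
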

\begin{proof}
Following \cite[\S2.28]{book}, Hochschild homology gives rise to a symmetric monoidal functor $HH\colon \dgcat_{\mathrm{sp}}(\bbZ[1/p_1, \ldots, 1/p_m]) \to \cD_c(\bbZ[1/p_1, \cdots, 1/p_m])$. Since $\mathfrak{A}$ is a smooth proper $\bbZ[1/p_1, \cdots, 1/p_m]$-linear dg category, this implies not only that the Hochschild homology modules $HH_n(\mathfrak{A}), n \in \bbZ$, are finitely generated but moreover that they are zero for $|n|\gg 0$. Let us choose a finite set of generators of $HH_n(\mathfrak{A})$ and write $\#_n$ for its cardinality. Under these choices, we set $C'_0:=\sum_{n\,\mathrm{even}}(\#_n + \#_{n-1})$ and $C'_1:=\sum_{n \,\mathrm{odd}}(\#_n + \#_{n-1})$. 

Choose a prime $p\neq p_1, \ldots, p_m$ and consider the associated symmetric monoidal functor between rigid symmetric monoidal categories:
\begin{equation}\label{eq:base-change}
-\otimes^{\bf L}_{\bbZ[1/p_1, \ldots, 1/p_m]} \bbF_p\colon \Hmo_{\mathrm{sp}}(\bbZ[1/p_1, \ldots, 1/p_m]) \too \Hmo_{\mathrm{sp}}(\bbF_p)\,.
\end{equation} 
As proved in \cite[Prop.~2.24]{book}, the Hochschild homology $HH(\mathfrak{A})$, resp. $HH(\mathfrak{A}_p)$, may be understood as the Euler characteristic of $\mathfrak{A}$, resp. $\mathfrak{A}_p$, in the rigid symmetric monoidal category $\Hmo_{\mathrm{sp}}(\bbZ[1/p_1, \ldots, 1/p_m])$, resp. $\Hmo_{\mathrm{sp}}(\bbF_p)$. Consequently, the above functor \eqref{eq:base-change} yields an isomorphism $HH(\mathfrak{A}_p)\simeq HH(\mathfrak{A})\otimes^{\bf L}_{\bbZ[1/p_1, \ldots, 1/p_m]} \bbF_p$ in the derived category $\cD_c(\bbF_p)$. Using the following free resolution 
$$ 0 \too \bbZ[1/p_1, \ldots, 1/p_m] \stackrel{\cdot p}{\too} \bbZ[1/p_1, \ldots, 1/p_m] \too \bbF_p \too 0\,,$$
we hence obtain the K\"unneth (split) short exact sequence of $\bbF_p$-vector spaces
$$ 0 \too HH_n(\mathfrak{A})\otimes_{\bbZ[1/p_1, \ldots, 1/p_m]} \bbF_p \too HH_n(\mathfrak{A}_p) \too \mathrm{Tor}_p(HH_{n-1}(\mathfrak{A}))\too 0\,,$$
where $\mathrm{Tor}_p(HH_{n-1}(\mathfrak{A}))$ stands for the $p$-torsion subgroup of $HH_{n-1}(\mathfrak{A})$. This (split) short exact sequence naturally implies the following (in)equality:
\begin{eqnarray*}
\mathrm{dim}_{\bbF_p}HH_n(\mathfrak{A}_p) & = & \mathrm{dim}_{\bbF_p}(HH_n(\mathfrak{A}) \otimes_{\bbZ[1/p_1, \ldots, 1/p_m]} \bbF_p) + \mathrm{dim}_{\bbF_p}\mathrm{Tor}_p(HH_{n-1}(\mathfrak{A})) \nonumber \\
& \leq & \#_n + \#_{n-1}\,. \label{eq:inequality}
\end{eqnarray*}

Similarly to \S\ref{sub:homological}, periodic cyclic homology (over a finite field) gives rise to a functor $HP_\ast(-)\colon \dgcat_{\mathrm{sp}}(\bbF_p) \to \mathrm{mod}_\bbZ(\bbF_p[v^{\pm1}])$ with values in the category of (degreewise finite-dimensional) $\bbZ$-graded $\bbF_p[v^{\pm1}]$-modules, where $v$ is a variable of degree $-2$. Thanks to the (convergent) Hodge-to-de Rham spectral sequence $HH_\ast(\mathfrak{A}_p)[u^{\pm 1}] \Rightarrow HP_\ast(\mathfrak{A}_p)$, where $u$ is a formal variable of degree $-2$, we have:
\begin{eqnarray*}
\mathrm{dim}_{\bbF_p}HP_0(\mathfrak{A}_p) \leq \sum_{n \, \mathrm{even}} \mathrm{dim}_{\bbF_p}HH_n(\mathfrak{A}_p) && \mathrm{dim}_{\bbF_p}HP_1(\mathfrak{A}_p) \leq \sum_{n \, \mathrm{odd}} \mathrm{dim}_{\bbF_p}HH_n(\mathfrak{A}_p)\,.
\end{eqnarray*}
By combining these inequalities with the fact that $\mathrm{dim}_{\bbF_p}HH_n(\mathfrak{A}_p) \leq \#_n + \#_{n-1}$, we hence conclude that $\mathrm{dim}_{\bbF_p}HP_0(\mathfrak{A}_p) \leq C'_0$ and $\mathrm{dim}_{\bbF_p}HP_1(\mathfrak{A}_p) \leq C'_1$.

Now, recall from \S\ref{sec:cyclotomic} that $TP_\ast(\mathfrak{A}_p)$ is a (degreewise finitely-generated) $\bbZ$-graded module over $TP_\ast(k)\simeq \bbZ_p[v^{\pm1}]$, where $v$ is a variable of degree $-2$. As proved in \cite[Thm.~3.4]{Ahkil} (see also \cite{BMS}), in the same way that $\bbZ_p/p\simeq \bbF_p$, we have natural isomorphisms $\pi_\ast(TP(\mathfrak{A}_p)/p)\simeq HP_\ast(\mathfrak{A}_p)$. Via the inclusion $TP_\ast(\mathfrak{A}_p)/p \subseteq \pi_\ast(TP(\mathfrak{A}_p)/p)$, we hence conclude that $TP_0(\mathfrak{A}_p)/p$ and $TP_1(\mathfrak{A}_p)/p$ are finite-dimensional $\bbF_p$-vector spaces. Let $\chi'_{(0,p)}$ and $\chi'_{(1,p)}$ be their dimensions. It follows from \cite[Lem.~2.12]{Ahkil} that the $\bbZ_p$-module $TP_0(\mathfrak{A}_p)$, resp. $TP_1(\mathfrak{A}_p)$, is a quotient of the free $\bbZ_p$-module of rank $\chi'_{(0,p)}$, resp. $\chi'_{(1,p)}$. Therefore, after inverting $p$, we obtain the inequalities:
\begin{eqnarray*}
\chi_{(0,p)}\leq \chi'_{(0,p)} \leq \mathrm{dim}_{\bbF_p}HP_0(\mathfrak{A}_p)\leq C'_0 && \chi_{(1,p)}\leq \chi'_{(1,p)} \leq \mathrm{dim}_{\bbF_p}HP_1(\mathfrak{A}_p)\leq C'_1\,.
\end{eqnarray*}
The proof follows now from the definitions $C_0:=\{C'_0, \chi_{(0, p_1)}, \ldots, \chi_{(0,p_m)}\}$ and $C_1:=\mathrm{max}\{C'_1, \chi_{(1,p_1)}, \ldots, \chi_{(1,p_m)}\}$.
\end{proof}
Given a prime $p\neq p_1, \ldots p_m$ and an integer $n\geq 1$, consider the complex numbers
\begin{eqnarray*}
\#_{(0,p,n)} & := & \mathrm{trace}(\mathrm{F}_0^{\circ n} \otimes_{K, \iota} \bbC\,|\, TP_0(\mathfrak{A}_p)_{1/p} \otimes_{K, \iota} \bbC) \\
\#_{(1,p,n)} & := & \mathrm{trace}(\mathrm{F}_1^{\circ n} \otimes_{K, \iota} \bbC\,|\, TP_1(\mathfrak{A}_p)_{1/p} \otimes_{K, \iota} \bbC) \,,
\end{eqnarray*}
where $\mathrm{F}_\ast^{\circ n}$ stands for the $n$-fold composition of the cyclotomic Frobenius $\mathrm{F}_\ast$ (consult Notation \ref{not:Frobenius}). In the same vein, given $i=1, \ldots, m$, consider the complex numbers:
\begin{eqnarray*}
\#_{(0,p_i,n)} & := & \mathrm{trace}(\mathrm{F}_0^{\circ n} \otimes_{K, \iota} \bbC\,|\, TP_0(\mathfrak{A}_{p_i})_{1/p_i} \otimes_{K, \iota} \bbC) \\
\#_{(1,p_i,n)} & := & \mathrm{trace}(\mathrm{F}_1^{\circ n} \otimes_{K, \iota} \bbC\,|\, TP_1(\mathfrak{A}_{p_i})_{1/p_i} \otimes_{K, \iota} \bbC) \,.
\end{eqnarray*}

\begin{proposition}\label{prop:bound2}
Assume that the conjectures $\mathrm{W}_{\mathrm{nc}}(\mathfrak{A}_p)$, resp. $\mathrm{W}_{\mathrm{nc}}(\mathfrak{A}_{p_i})$, holds. Under this assumption, there exist integers $C_0, C_1 \gg 0$ such that $|\#_{(0,p,n)}|\leq C_0$ and $|\#_{(1,p,n)}|\leq C_1 p^{\frac{1}{2}n}$, resp. $|\#_{(0,p_i,n)}|\leq C_0$ and $|\#_{(1,p_i,n)}|\leq C_1 p_i^{\frac{1}{2}n}$, for every prime $p\neq p_1, \ldots, p_m$ and $n\geq1$, resp. for every $i=1, \ldots, m$ and $n\geq1$.
\end{proposition}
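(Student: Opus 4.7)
\medskip

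The plan is to express each trace as a sum of $n$-th powers of eigenvalues of the cyclotomic Frobenius, then apply the triangle inequality together with the absolute-value bounds coming from $\mathrm{W}_{\mathrm{nc}}$ and the uniform dimension bounds coming from Proposition \ref{prop:bound-11}.

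First, I would observe that for any $K$-linear endomorphism $f$ of a finite-dimensional $K$-vector space $V$, and any embedding $\iota\colon K\hookrightarrow \bbC$, the eigenvalues (with multiplicity) of $f^{\circ n}\otimes_{K,\iota}\bbC$ on $V\otimes_{K,\iota}\bbC$ are the $n$-th powers of the eigenvalues of $f\otimes_{K,\iota}\bbC$; hence the trace $\mathrm{trace}(f^{\circ n}\otimes_{K,\iota}\bbC)$ equals $\sum_j \mu_j^n$, where $\mu_1,\ldots,\mu_{\dim_K V}$ are the eigenvalues of $f\otimes_{K,\iota}\bbC$. Applied to $f=\mathrm{F}_0$, resp. $f=\mathrm{F}_1$, acting on $TP_0(\mathfrak{A}_p)_{1/p}$, resp. $TP_1(\mathfrak{A}_p)_{1/p}$, this gives
\[
\#_{(0,p,n)}=\sum_{j=1}^{\chi_{(0,p)}}\mu_j^n \ko \#_{(1,p,n)}=\sum_{j=1}^{\chi_{(1,p)}}\nu_j^n\,,
\]
and similarly for $\mathfrak{A}_{p_i}$.

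Second, I would invoke the hypothesis $\mathrm{W}_{\mathrm{nc}}(\mathfrak{A}_p)$ (here the base field is $\bbF_p$, so ``$q$'' in the noncommutative Weil statement is $p$). By definition, this conjecture says that the eigenvalues of $\mathrm{F}_0$, resp. $\mathrm{F}_1$, are algebraic numbers all of whose complex conjugates have absolute value $1$, resp. $p^{1/2}$. Since the eigenvalues of $\mathrm{F}_0\otimes_{K,\iota}\bbC$ are precisely images under some complex conjugate of the eigenvalues of $\mathrm{F}_0$ (as roots of the image of the characteristic polynomial under $\iota$), we obtain $|\mu_j|=1$ and $|\nu_j|=p^{1/2}$ for all $j$, independently of the embedding $\iota$. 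The analogous statement holds for $\mathfrak{A}_{p_i}$ with $p$ replaced by $p_i$.

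Third, a direct application of the triangle inequality yields
\[
|\#_{(0,p,n)}|\leq \sum_{j=1}^{\chi_{(0,p)}}|\mu_j|^n = \chi_{(0,p)}\ko |\#_{(1,p,n)}|\leq \sum_{j=1}^{\chi_{(1,p)}}|\nu_j|^n =\chi_{(1,p)}\cdot p^{\frac{n}{2}}\,,
\]
and similarly $|\#_{(0,p_i,n)}|\leq \chi_{(0,p_i)}$ and $|\#_{(1,p_i,n)}|\leq \chi_{(1,p_i)}\cdot p_i^{n/2}$. Finally, Proposition \ref{prop:bound-11} supplies integers $C_0,C_1\gg 0$ (independent of $p$ and of $i$) such that $\chi_{(0,p)},\chi_{(0,p_i)}\leq C_0$ and $\chi_{(1,p)},\chi_{(1,p_i)}\leq C_1$ for every prime $p\neq p_1,\ldots,p_m$ and every $i=1,\ldots,m$. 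Combining the two previous displays with these uniform bounds yields the desired estimates. There is no real obstacle here: the only subtle point is the independence of the bound on $\iota$ and on $p$, and both are handled, respectively, by the ``complex conjugates'' clause in the statement of $\mathrm{W}_{\mathrm{nc}}$ and by the uniform bound of Proposition \ref{prop:bound-11}.
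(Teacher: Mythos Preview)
Your proof is correct and follows essentially the same approach as the paper's own proof: express the trace as the sum of $n$-th powers of the eigenvalues of the cyclotomic Frobenius (after base change along $\iota$), use the noncommutative Weil conjecture to bound each eigenvalue in absolute value by $1$ (even case) or $p^{1/2}$ (odd case), and then invoke Proposition~\ref{prop:bound-11} to uniformly bound the number of eigenvalues $\chi_{(0,p)}$ and $\chi_{(1,p)}$ by $C_0$ and $C_1$ respectively. Your added remark on why the eigenvalues of $\mathrm{F}_0\otimes_{K,\iota}\bbC$ inherit the absolute-value constraint from the ``all complex conjugates'' clause of $\mathrm{W}_{\mathrm{nc}}$ is a helpful clarification that the paper leaves implicit.
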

\begin{proof}
Given a prime $p\neq p_1, \ldots, p_m$, let us write $\{\lambda_{(0,p,1)}, \ldots, \lambda_{(0,p,\chi_{(0,p)})}\}$ for the eigenvalues (with multiplicity) of the automorphism $\mathrm{F}_0\otimes_{K, \iota} \bbC$ of $TP_0(\mathfrak{A}_p)_{1/p} \otimes_{K, \iota}\bbC$ and $\{\lambda_{(1,p,1)}, \ldots, \lambda_{(1,p,\chi_{(1,p)})}\}$ for the eigenvalues (with multiplicity) of the automorphism $\mathrm{F}_1\otimes_{K, \iota}\bbC$ of $TP_1(\mathfrak{A}_p)_{1/p} \otimes_{K, \iota}\bbC$. In the same vein, given $i=1, \ldots m$, let us write $\{\lambda_{(0,p_i,1)}, \ldots, \lambda_{(0,p_i,\chi_{(0,p_i)})}\}$ for the eigenvalues (with multiplicity) of the automorphism $\mathrm{F}_0\otimes_{K, \iota} \bbC$ of $TP_0(\mathfrak{A}_{p_i})_{1/{p_i}} \otimes_{K, \iota}\bbC$ and $\{\lambda_{(1,p_i,1)}, \ldots, \lambda_{(1,p_i,\chi_{(1,p_i)})}\}$, for the eigenvalues of the automorphism $\mathrm{F}_1\otimes_{K, \iota}\bbC$ of $TP_1(\mathfrak{A}_{p_i})_{1/{p_i}} \otimes_{K, \iota} \bbC$. Under these notations, we have the following (in)equalities
\begin{eqnarray}
|\#_{(0,p,n)}|& = & |\mathrm{trace}(\mathrm{F}_0^{\circ n}\otimes_{K, \iota}\bbC)| \nonumber\\
& = &  |\lambda^n_{(0,p,1))}+ \cdots + \lambda^n_{(0,p,\chi_{(0,p)})}| \nonumber\\
& \leq & |\lambda_{(0,p,1))}|^n + \cdots + |\lambda_{(0,p,\chi_{(0,p)})}|^n \nonumber \\
& = & \chi_{(0,p)} \label{eq:inequality1} \\
& \leq  & C_0 \label{eq:inequality11}\,,
\end{eqnarray}
where \eqref{eq:inequality1} follows from conjecture $\mathrm{W}_{\mathrm{nc}}(\mathfrak{A}_p)$ and \eqref{eq:inequality11} from  Proposition \ref{prop:bound-11}; similarly with $p$ replaced by $p_i$. In the same vein, we have the (in)equalities
\begin{eqnarray}
|\#_{(1,p,n)}|& = & |\mathrm{trace}(\mathrm{F}_1^{\circ n}\otimes_{K, \iota}\bbC)| \nonumber\\
& = &  |\lambda^n_{(1,p,1))}+ \cdots + \lambda^n_{(1,p,\chi_{(1,p)})}| \nonumber\\
& \leq & |\lambda_{(1,p,1))}|^n + \cdots + |\lambda_{(1,p,\chi_{(1,p)})}|^n \nonumber \\
& = & \chi_{(1,p)} p^{\frac{1}{2}n}  \label{eq:inequality2} \\
& \leq & C_1 p^{\frac{1}{2}n} \label{eq:inequality22}\,,
\end{eqnarray}
where \eqref{eq:inequality2} follows from conjecture $\mathrm{W}_{\mathrm{nc}}(\mathfrak{A}_p)$ and \eqref{eq:inequality22} from Proposition \ref{prop:bound-11}; similarly with $p$ replaced by $p_i$.
\end{proof}
Recall the following general result, whose proof is a simple linear algebra exercise that we leave for the reader. 
\begin{lemma}\label{lem:exercise1}
Given an endomorphism $f\colon V \to V$ of a finite-dimensional vector space, we have the following equality of formal power series
$$ \mathrm{log}(\frac{1}{\mathrm{det}(\id - tf|V)})= \sum_{n\geq 1} \mathrm{trace}(f^{\circ n})\frac{t^n}{n}\,,$$
where $\mathrm{log}(t):= \sum_{n\geq1} \frac{(-1)^{n+1}}{n}(t-1)^n$.
\end{lemma}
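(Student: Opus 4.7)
The plan is to reduce to the one-dimensional case via the spectral decomposition of $f$. Since both sides of the claimed identity are formal power series whose coefficients are polynomials in the matrix entries of $f$, one may harmlessly pass to an algebraic closure of the base field and replace $f$ by an upper-triangular representative. Let $\lambda_1,\dots,\lambda_d$ denote the diagonal entries (i.e., the eigenvalues of $f$, counted with multiplicity), where $d=\mathrm{dim}(V)$.

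With this reduction, we have the factorization $\mathrm{det}(\mathrm{id}-tf|V)=\prod_{i=1}^d(1-\lambda_i t)$, and for every $n\geq 1$ the composition $f^{\circ n}$ is upper-triangular with diagonal $\lambda_1^n,\dots,\lambda_d^n$, so $\mathrm{trace}(f^{\circ n})=\sum_{i=1}^d \lambda_i^n$. Taking formal logarithms and using that $\mathrm{log}$ turns products into sums, the identity of Lemma \ref{lem:exercise1} reduces to the $d=1$ case: namely, to the statement
$$-\mathrm{log}(1-\lambda t)=\sum_{n\geq 1}\lambda^n\frac{t^n}{n}$$
as formal power series in $t$, after which one recovers the claim by summing over $i=1,\dots,d$ and interchanging the two summations.

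Thus the only remaining point is the scalar identity $\mathrm{log}((1-x)^{-1})=\sum_{n\geq 1}x^n/n$ as formal power series. Plugging $y=(1-x)^{-1}$ into the definition $\mathrm{log}(y):=\sum_{n\geq 1}\frac{(-1)^{n+1}}{n}(y-1)^n$ gives $\mathrm{log}((1-x)^{-1})=\sum_{n\geq 1}\frac{(-1)^{n+1}}{n}\bigl(\frac{x}{1-x}\bigr)^n$, and expanding each $(1-x)^{-n}$ by the binomial series and collecting the coefficient of $x^m$ yields $1/m$ (this is the standard power series identity $-\mathrm{log}(1-x)=\sum_{n\geq 1}x^n/n$, derivable by differentiating both sides and matching with $\sum_{n\geq 0}x^n=(1-x)^{-1}$).

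The main (and only) obstacle is this last algebraic manipulation of formal power series; it is routine but is the only non-formal step. Everything else is linear algebra of upper-triangular matrices plus compatibility of $\mathrm{log}$ with products under the formal-power-series substitution.
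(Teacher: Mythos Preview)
Your argument is correct: triangularizing over an algebraic closure reduces the identity to the scalar case $-\log(1-\lambda t)=\sum_{n\geq 1}\lambda^n t^n/n$, and the justification via polynomial dependence on the matrix entries is a clean way to pass to the closure. The paper itself does not supply a proof at all---it states the lemma as ``a simple linear algebra exercise that we leave for the reader''---so your argument is exactly the kind of standard verification the author had in mind.
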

Given a prime $p\neq p_1, \ldots, p_m$, consider the following (auxiliar) formal power series $\phi_{(0,p)}(t) :=\sum_{n\geq 1} \#_{(0,p,n)}\frac{t^n}{n}$ and $\phi_{(1,p)}(t) := \sum_{n\geq 1} \#_{(1,p,n)}\frac{t^n}{n}$ as well as their exponentiations $\varphi_{(0,p)}(t):=\sum_{n\geq 0} a_{(0,p,n)} t^n$ and $\varphi_{(1,p)}(t):=\sum_{n\geq 0} a_{(1,p,n)} t^n$. In the same vein, given $i=1,\ldots, m$, consider the following (auxiliar) formal power series $\phi_{(0,p_i)}(t) :=\sum_{n\geq 1} \#_{(0,p_i,n)}\frac{t^n}{n}$ and $\phi_{(1,p_i)}(t) := \sum_{n\geq 1} \#_{(1,p_i,n)}\frac{t^n}{n}$ as well as their exponentiations $\varphi_{(0,p_i)}(t):=\sum_{n\geq 0} a_{(0,p_i,n)} t^n$ and $\varphi_{(1,p_i)}(t):=\sum_{n\geq 0} a_{(1,p_i,n)} t^n$. Note that thanks to the above Lemma \ref{lem:exercise1}, we have the following (formal) equalities:
\begin{eqnarray}
\varphi_{(0,p)}(p^{-s})=\zeta_{\mathrm{even}}(\mathfrak{A}_p;s) && \varphi_{(0,p_i)}(p_i^{-s})=\zeta_{\mathrm{even}}(\mathfrak{A}_{p_i};s) \label{eq:Euler1}\\
 \varphi_{(1,p)}(p^{-s})=\zeta_{\mathrm{odd}}(\mathfrak{A}_p;s)  && \varphi_{(1,p_i)}(p_i^{-s})=\zeta_{\mathrm{odd}}(\mathfrak{A}_{p_i};s) \label{eq:Euler2}\,.
\end{eqnarray}
\begin{definition}
Let $\varphi_0(s):=\sum_{n\geq 1} \frac{b_{(0,n)}}{n^s}$, resp. $\varphi_1(s):=\sum_{n\geq 1} \frac{b_{(1,n)}}{n^s}$, be the multiplicative Dirichlet series, where $b_{(0,n)}:=a_{(0, p_{r_1}, v_{r_1})}\cdots a_{(0, p_{r_n}, v_{r_n})}$, resp. $b_{(1,n)}:=a_{(1, p_{r_1}, v_{r_1})}\cdots a_{(1, p_{r_n}, v_{r_n})}$, is the product associated to the (unique) prime decomposition $p_{r_1}^{v_{r_1}}\cdots p_{r_n}^{v_{r_n}}$ of the integer $n\geq 1$.
\end{definition}
Note that we have the following (formal) equalities
\begin{eqnarray*}
\varphi_0(s) & \stackrel{\mathrm{(a)}}{=} & \prod_{p\neq p_1, \ldots, p_m}\sum_{n\geq0} \frac{b_{(0,p^n)}}{p^{ns}}\cdot \prod_{1\leq i\leq m}(\sum_{n\geq0} \frac{b_{(0,p_i^n)}}{p_i^{ns}}) \label{eq:equality-star} \\
& = & \prod_{p\neq p_1, \ldots, p_m} \varphi_{(0,p)}(p^{-s}) \cdot \prod_{1\leq i\leq m} \varphi_{(0,p_i)}(p_i^{-s}) \\
&= &L_{\mathrm{even}}(\cA;s) \nonumber
\end{eqnarray*}
and
\begin{eqnarray*}
\varphi_1(s) & \stackrel{\mathrm{(b)}}{=} & \prod_{p\neq p_1, \ldots, p_m}\sum_{n\geq0} \frac{b_{(1,p^n)}}{p^{ns}}\cdot \prod_{1\leq i\leq m}(\sum_{n\geq0} \frac{b_{(1,p_i^n)}}{p_i^{ns}}) \label{eq:equality-star1}\\
& = & \prod_{p\neq p_1, \ldots, p_m} \varphi_{(1,p)}(p^{-s}) \cdot \prod_{1\leq i\leq m} \varphi_{(1,p_i)}(p_i^{-s}) \\
& = & L_{\mathrm{odd}}(\cA;s)\,, \nonumber
\end{eqnarray*}
where (a), resp. (b), follows from the Euler product decomposition of the (multiplicative) Dirichlet series $\varphi_0(s)$, resp. $\varphi_1(s)$. These (formal) equalities imply that the proof of Theorem \ref{thm:main-L-functions} follows now from the next result:
\begin{proposition}\label{prop:Dirichlet}
Assume that the conjectures $\mathrm{W}_{\mathrm{nc}}(\mathfrak{A}_p)$ and $\mathrm{W}_{\mathrm{nc}}(\mathfrak{A}_{p_i})$ hold. Under these assumptions, the Dirichlet series $\varphi_0(s)$, resp. $\varphi_1(s)$, converges (absolutely) in the half-plane $\mathrm{Re}(s)>1$, resp. $\mathrm{Re}(s)> \frac{3}{2}$. Moreover, $\varphi_0(s)$ and $\varphi_1(s)$ are non-zero in these half-plane regions.
\end{proposition}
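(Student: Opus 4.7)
The plan is to combine the Euler product factorizations $L_{\mathrm{even}}(\cA;s) = \varphi_0(s)$ and $L_{\mathrm{odd}}(\cA;s) = \varphi_1(s)$ with the eigenvalue bounds provided by $\mathrm{W}_{\mathrm{nc}}(\mathfrak{A}_p)$ and $\mathrm{W}_{\mathrm{nc}}(\mathfrak{A}_{p_i})$, together with the uniform bounds on $\chi_{(0,p)},\chi_{(1,p)}$ supplied by Proposition \ref{prop:bound-11}. Morally, the argument is a direct generalization of the classical proof that $\zeta(s)$ converges absolutely and is non-zero in $\mathrm{Re}(s)>1$.

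First, by Lemma \ref{lem:exercise1} I would factor each local Euler factor as
$$\varphi_{(0,p)}(t) = \prod_{i=1}^{\chi_{(0,p)}} \frac{1}{1-\lambda_{(0,p,i)}\, t}, \qquad \varphi_{(1,p)}(t) = \prod_{i=1}^{\chi_{(1,p)}} \frac{1}{1-\lambda_{(1,p,i)}\, t},$$
where $\{\lambda_{(0,p,i)}\}$ and $\{\lambda_{(1,p,i)}\}$ are the eigenvalues of $\mathrm{F}_0 \otimes_{K,\iota}\bbC$ and $\mathrm{F}_1 \otimes_{K,\iota}\bbC$, respectively (analogously at the bad primes $p_i$). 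Expanding geometrically and using $|\lambda_{(0,p,i)}|=1$, $|\lambda_{(1,p,i)}|=p^{1/2}$ from the noncommutative Weil conjecture together with Proposition \ref{prop:bound-11}, I obtain the uniform-in-$p$ coefficient bounds $|a_{(0,p,v)}| \leq \binom{v+C_0-1}{C_0-1}$ and $|a_{(1,p,v)}| \leq \binom{v+C_1-1}{C_1-1}\, p^{v/2}$. By the multiplicativity of $b_{(0,n)}$ and $b_{(1,n)}$, and using the negative binomial generating function, these immediately yield
$$\sum_{n\geq 1} \frac{|b_{(0,n)}|}{n^\sigma} \leq \prod_p (1-p^{-\sigma})^{-C_0} = \zeta(\sigma)^{C_0}, \qquad \sum_{n\geq 1} \frac{|b_{(1,n)}|}{n^\sigma} \leq \zeta(\sigma - \tfrac12)^{C_1},$$
so $\varphi_0(s)$, resp. $\varphi_1(s)$, converges absolutely for $\mathrm{Re}(s) > 1$, resp. $\mathrm{Re}(s) > \tfrac32$.

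For non-vanishing, I would pass to logarithms. Using $\log\varphi_{(0,p)}(t) = \phi_{(0,p)}(t) = \sum_{n\geq 1} \#_{(0,p,n)}\, t^n/n$ together with $|\#_{(0,p,n)}|\leq C_0$ from Proposition \ref{prop:bound2}, I get
$$\sum_{p\neq p_1,\ldots,p_m} \bigl|\log \varphi_{(0,p)}(p^{-s})\bigr| \leq C_0 \sum_p \sum_{n\geq 1} \frac{p^{-n\mathrm{Re}(s)}}{n},$$
which converges for $\mathrm{Re}(s)>1$ (the $n=1$ piece is majorized by $C_0\,\zeta(\mathrm{Re}(s))$ and the $n\geq 2$ tail converges already for $\mathrm{Re}(s) > \tfrac12$). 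Since each local factor $\varphi_{(0,p)}(p^{-s})$ is a finite product of terms $(1-\lambda_{(0,p,i)} p^{-s})^{-1}$ which are non-zero throughout $\mathrm{Re}(s)>0$ (as $|\lambda_{(0,p,i)} p^{-s}|<1$ there), absolute convergence of the sum of logarithms forces $L_{\mathrm{even}}(\cA;s)\neq 0$ in $\mathrm{Re}(s)>1$; the finitely many Euler factors at the bad primes are non-zero by the same eigenvalue argument. The analogous estimate using $|\#_{(1,p,n)}|\leq C_1\, p^{n/2}$ handles $L_{\mathrm{odd}}(\cA;s)$ in $\mathrm{Re}(s) > \tfrac32$. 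The only real subtlety is that the exponent $C_0$ (and $C_1$) in the comparison with $\zeta(\sigma)^{C_0}$ must be taken independent of $p$ — this is precisely the content of Proposition \ref{prop:bound-11}, without which the argument would degenerate.
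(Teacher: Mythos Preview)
Your proposal is correct and rests on exactly the same ingredients as the paper: the Euler product structure, the eigenvalue bounds $|\lambda_{(0,p,i)}|=1$ and $|\lambda_{(1,p,i)}|=p^{1/2}$ from $\mathrm{W}_{\mathrm{nc}}$, and the uniform dimension bound of Proposition~\ref{prop:bound-11}. The packaging differs slightly. For convergence, the paper works with the logarithms $\phi_{(0,p)}(p^{-z})$ directly (this is your non-vanishing argument), bounding $\sum_p |\phi_{(0,p)}(p^{-z})|$ by $C_0\sum_p (p^z-1)^{-1}$ via Proposition~\ref{prop:bound2}; you instead bound the Dirichlet coefficients themselves and compare with $\zeta(\sigma)^{C_0}$, which is a bit cleaner and gives the same half-plane. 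For non-vanishing, the paper simply observes that each Euler factor is non-zero in the relevant half-plane (implicitly relying on the absolute convergence of the sum of logarithms already established), whereas you spell this implication out explicitly. In effect you run the paper's single log-estimate twice---once repackaged for convergence, once for non-vanishing---so there is some redundancy: your log argument alone already yields both conclusions.
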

\begin{proof}
Let $z> 1$, resp. $z> \frac{3}{2}$, be a real number. Thanks to the classical properties of multiplicative Dirichlet series (see \cite[Chap.~VI \S2]{Serre-book}), it suffices to show that the infinite sum $\varphi_0(z)$, resp. $\varphi_1(z)$, converges (absolutely). Note that the above (formal) equality $\varphi_0(s)=L_{\mathrm{even}}(\cA;s)$, resp. $\varphi_1(s)=L_{\mathrm{odd}}(\cA;s)$, implies that $\varphi_0(z)$, resp. $\varphi_1(z)$, converges (absolutely) if and only if the following infinite product $\prod_{p\neq p_1, \ldots, p_m} \varphi_{(0,p)}(p^{-z}) \cdot \prod^m_{i=1} \varphi_{(0,p_i)}(p_i^{-z})$, resp. the following infinite product $\prod_{p\neq p_1, \ldots, p_m} \varphi_{(1,p)}(p^{-z}) \cdot \prod^m_{i=1} \varphi_{(1,p_i)}(p_i^{-z})$, converges (absolutely). Using exponentiation, it is then enough to show that the sums $\sum_{p\neq p_1, \ldots, p_m}\phi_{(0,p)}(p^{-z})$ and $\sum_{i=1}^m \phi_{(0,p_i)}(p_i^{-z})$, resp. $\sum_{p\neq p_1, \ldots, p_m}\phi_{(1,p)}(p^{-z})$ and $\sum_{i=1}^m \phi_{(1,p_i)}(p_i^{-z})$, converge (absolutely). In what concerns the first sum, we have the following (in)equalities
\begin{eqnarray}
\sum_{p \neq p_1, \ldots, p_m} |\phi_{(0,p)}(p^{-z})|& = & \sum_{p \neq p_1, \ldots, p_m} \sum_{n \geq 1} \frac{|\#_{(0,p,n)}|}{n p^{nz}} \nonumber\\
& \leq & \sum_{p \neq p_1, \ldots, p_m} \sum_{n \geq 1} \frac{C_0}{n p^{nz}}  \label{eq:star-22}\\
& \leq & C_0 \sum_{p \neq p_1, \ldots, p_m} \sum_{n \geq 1} \frac{1}{p^{nz}} \\
& \leq & C_0 \sum_{p \neq p_1, \ldots, p_m} \frac{1}{p^{z}-1}\nonumber\,,
\end{eqnarray}
where \eqref{eq:star-22} follows from Proposition \ref{prop:bound2}. Since the series $\sum_{p \neq p_1, \ldots, p_m} \frac{1}{p^{z}-1}$ is convergent (with $z>1$), we hence conclude that $\sum_{p \neq p_1, \ldots, p_m} \phi_{(0,p)}(p^{-z})$ converges (absolutely). Similarly, we have the following (in)equalities
\begin{eqnarray}
\sum_{p \neq p_1, \ldots, p_m} |\phi_{(1,p)}(p^{-z})|& = & \sum_{p \neq p_1, \ldots, p_m} \sum_{n \geq 1} \frac{|\#_{(1,p,n)}|}{n p^{nz}} \nonumber\\
& \leq & \sum_{p \neq p_1, \ldots, p_m} \sum_{n \geq 1} \frac{C_1p^{\frac{1}{2}n}}{n p^{nz}} \label{eq:last}\\
& \leq & C_1 \sum_{p \neq p_1, \ldots, p_m} \sum_{n \geq 1} \frac{1}{p^{n(z-\frac{1}{2})}} \\
& \leq & \sum_{p \neq p_1, \ldots, p_m} \frac{1}{p^{(z-\frac{1}{2})}-1} \nonumber\,,
\end{eqnarray}
where \eqref{eq:last} follows from Proposition \eqref{prop:bound2}. Since the series $\sum_{p \neq p_1, \ldots, p_m} \frac{1}{p^{(z-\frac{1}{2})}-1}$ is convergent (with $z>\frac{3}{2}$), we hence conclude that $\sum_{p \neq p_1, \ldots, p_m} \phi_{(1,p)}(p^{-z})$ converges (absolutely). In what concerns the second sum, we have the (in)equalities
\begin{eqnarray}
\sum_{1\le i\leq m} |\phi_{(0,p_i)}(p_i^{-z})|& = & \sum_{1\leq i\leq m} \sum_{n \geq 1} \frac{|\#_{(0,p_i,n)}|}{n p_i^{nz}} \nonumber\\
& \leq & \sum_{1\leq i\leq m} \sum_{n \geq 1} \frac{C_0}{n p_i^{nz}}  \label{eq:star-222}\\
& \leq & C_0 \sum_{1\leq i\leq m} \sum_{n \geq 1} \frac{1}{p_i^{nz}} \\
& \leq & C_0 \sum_{1\leq i\leq} \frac{1}{p_i^{z}-1} \,,\nonumber
\end{eqnarray}
where \eqref{eq:star-222} follows from Proposition \ref{prop:bound2}. This implies that $\sum_{i=1}^m \phi_{(0,p_i)}(p_i^{-z})$ converges (absolutely). Similarly, we have the following (in)equalities
\begin{eqnarray}
\sum_{1\leq i\leq m} |\phi_{(1,p_i)}(p_i^{-z})|& = & \sum_{1\leq i\leq m} \sum_{n \geq 1} \frac{|\#_{(1,p_i,n)}|}{n p_i^{nz}} \nonumber\\
& \leq & \sum_{1\leq i\leq m} \sum_{n \geq 1} \frac{C_1p_i^{\frac{1}{2}n}}{n p_i^{nz}} \label{eq:last1}\\
& \leq & C_1 \sum_{1\leq i\leq m} \sum_{n \geq 1} \frac{1}{p_i^{n(z-\frac{1}{2})}} \\
& \leq & C_1 \sum_{1\leq i\leq m} \frac{1}{p_i^{(z-\frac{1}{2})}-1} \nonumber\,,
\end{eqnarray}
where \eqref{eq:last1} follows from Proposition \ref{prop:bound2}. This implies that $\sum_{i=1}^m \phi_{(1,p_i)}(p_i^{-z})$ converges (absolutely). Finally, note that the Dirichlet series $\varphi_0(s)$, resp. $\varphi_1(s)$, is non-zero in the half-plane $\mathrm{Re}(s) > 1$, resp. $\mathrm{Re}(s) > \frac{3}{2}$ because each one of its Euler factors \eqref{eq:Euler1}, resp. \eqref{eq:Euler2}, is non-zero in this half-plane region.
\end{proof}
\section{Alternative proof of Serre's convergence result}\label{sec:alternative-Serre}
Let $X$ be a smooth proper $\bbQ$-scheme of dimension $d$. Given an integer $0 \leq w \leq 2d$, consider the following $L$-function defined in \S\ref{sec:L-functions}:
\begin{equation}\label{eq:Serre-last}
L_w(X;s):=\prod_{p\neq p_1, \ldots p_m} \zeta_w(\mathfrak{X}_p;s) \cdot \prod_{1\leq i \leq m} \zeta_w(\mathfrak{X}_{p_i};s)\,.
\end{equation}
In the sixties, Serre \cite{Serre2,Serre1} proved the following result:
\begin{theorem}[Serre]\label{thm:Serre-1}
The infinite product \eqref{eq:Serre-last} converges (absolutely) in the half-plane $\mathrm{Re}(s)> \frac{w}{2}+1$. Moreover, the $L$-function $L_w(X;s)$ is non-zero in this half-plane region.
\end{theorem}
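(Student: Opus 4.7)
The plan is to adapt the arguments used in the proof of Theorem \ref{thm:main-L-functions} to the commutative setting, replacing the role of the cyclotomic Frobenius on $TP_\ast$ by the classical Frobenius on crystalline cohomology, and using Deligne's proof of the Weil conjecture in place of the (conjectural) noncommutative Weil conjecture.

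First I would establish a uniform upper bound on the crystalline Betti numbers $b_w(\mathfrak{X}_p) := \dim_{W(\bbF_p)_{1/p}} H^w_{\mathrm{crys}}(\mathfrak{X}_p)$ and $b_w(\mathfrak{X}_{p_i})$, independent of the prime $p$. This is the analogue of Proposition \ref{prop:bound-11}: since $\mathfrak{X}$ is smooth and proper over $\mathrm{Spec}(\bbZ[1/p_1,\ldots,1/p_m])$, the de Rham cohomology of $\mathfrak{X}$ is a finitely generated $\bbZ[1/p_1,\ldots,1/p_m]$-module of rank equal to the Betti number $b_w$ of $X$, and the base-change comparison between de Rham and crystalline cohomology yields $b_w(\mathfrak{X}_p)\leq b_w$ (with analogous inequality for the fibers $\mathfrak{X}_{p_i}$). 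Setting $C_w := \max\{b_w,\,b_w(\mathfrak{X}_{p_1}),\ldots,b_w(\mathfrak{X}_{p_m})\}$ gives the required uniform bound.

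Next, by Deligne's theorem (the conjecture $\mathrm{W}(\mathfrak{X}_p)$ and $\mathrm{W}(\mathfrak{X}_{p_i})$), the eigenvalues of $\mathrm{Fr}^w$ on $H^w_{\mathrm{crys}}(\mathfrak{X}_p)$ (resp. $H^w_{\mathrm{crys}}(\mathfrak{X}_{p_i})$) have all complex conjugates of absolute value $p^{w/2}$ (resp. $p_i^{w/2}$). Writing
\[
\#_{(w,p,n)} := \mathrm{trace}\bigl((\mathrm{Fr}^w)^{\circ n}\otimes_{K,\iota}\bbC \mid H^w_{\mathrm{crys}}(\mathfrak{X}_p)\otimes_{K,\iota}\bbC\bigr),
\]
the triangle inequality applied to the eigenvalue sum yields $|\#_{(w,p,n)}|\leq C_w\cdot p^{wn/2}$ (and similarly for $p_i$). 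This is the analogue of Proposition \ref{prop:bound2}.

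Now I would mimic the final steps of \S\ref{proof:main-L-functions}. Using Lemma \ref{lem:exercise1}, we have $\zeta_w(\mathfrak{X}_p;s) = \exp\bigl(\sum_{n\geq 1}\#_{(w,p,n)}\tfrac{p^{-ns}}{n}\bigr)$, and likewise for $\mathfrak{X}_{p_i}$. For a real $z > \tfrac{w}{2}+1$, the sum of logarithms of the Euler factors is bounded by
\[
\sum_{p\neq p_1,\ldots,p_m}\sum_{n\geq 1}\frac{|\#_{(w,p,n)}|}{n\,p^{nz}} \;\leq\; C_w\sum_{p\neq p_1,\ldots,p_m}\sum_{n\geq 1}\frac{1}{n\,p^{n(z-w/2)}} \;\leq\; C_w\sum_{p}\frac{1}{p^{\,z-w/2}-1},
\]
which converges because $z - \tfrac{w}{2} > 1$ (compare with Euler's classical estimate for $\zeta(z-\tfrac{w}{2})$). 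The same estimate with the finite sum over $p_1,\ldots,p_m$ is immediate. Exponentiating, the infinite product \eqref{eq:Serre-last} converges absolutely in $\mathrm{Re}(s) > \tfrac{w}{2}+1$. Non-vanishing in this half-plane follows because each Euler factor $\zeta_w(\mathfrak{X}_p;s) = \det(\id - p^{-s}\mathrm{Fr}^w)^{-1}$ is itself non-zero (it is the reciprocal of a polynomial in $p^{-s}$) and the absolute convergence of the product of logarithms rules out vanishing of the limit.

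The main obstacle is the first step: securing the uniform bound $b_w(\mathfrak{X}_p)\leq b_w$ independent of $p$. Everything else is a direct translation of Propositions \ref{prop:bound2} and \ref{prop:Dirichlet}; by contrast, this uniform boundedness requires genuine input from integral $p$-adic Hodge theory (or, alternatively, the constructibility of $R^w f_\ast \bbQ_\ell$ on $\mathrm{Spec}(\bbZ[1/p_1,\ldots,1/p_m,1/\ell])$ and smooth-proper base change). Once this input is in place, the rest of the argument reduces to the same elementary logarithmic estimate used in the proof of Theorem \ref{thm:main-L-functions}.
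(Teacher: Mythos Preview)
Your proposal is correct in outline and matches the paper's argument from the trace bound onward (Propositions \ref{prop:bound-223} and the final Dirichlet-series estimate are exactly your steps 2 and 3). The one substantive divergence is in how you obtain the uniform bound on the crystalline Betti numbers.

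You propose to bound $\beta_{(w,p)}$ via the de Rham--crystalline comparison (or alternatively via \'etale constructibility and smooth-proper base change), and you correctly flag this as the ``main obstacle'' requiring external input. The paper instead avoids any such input: it applies Proposition~\ref{prop:bound-11} directly to $\cA=\perf_\dg(X)$, obtaining uniform bounds $C_0$, $C_1$ on $\dim_K TP_0(\perf_\dg(\mathfrak{X}_p))_{1/p}$ and $\dim_K TP_1(\perf_\dg(\mathfrak{X}_p))_{1/p}$, and then invokes the Scholze isomorphisms \eqref{eq:Scholze1}--\eqref{eq:Scholze2} to conclude $\beta_{(w,p)}\leq C_0+C_1$. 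So the uniform Betti bound is obtained \emph{for free} from the noncommutative machinery already developed, with no appeal to $p$-adic Hodge theory or \'etale base change. This is precisely the point of the section: to exhibit a genuinely noncommutative route to Serre's result. Your approach is more classical and self-contained from the commutative side, but it misses the paper's intended punchline that Proposition~\ref{prop:bound-11} (whose proof runs through Hochschild homology and the relation $TP/p\hookrightarrow HP$) already subsumes the needed finiteness.
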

In this section, we present an alternative noncommutative proof of Theorem \ref{thm:Serre-1}. Note that by combining Theorem \ref{thm:Serre-1} with the weight decomposition \eqref{eq:L-factorization}, we conclude that the $L$-function $L(X;s)$ of $X$ converges (absolutely) in the half-plane $\mathrm{Re}(s)>d+1$. Moreover, $L(X;s)$ is non-zero in this half-plane region. 

\smallbreak

Given a prime $p\neq p_1, \ldots p_m$, let us write $\beta_{(w,p)}:=\mathrm{dim}_KH^w_{\mathrm{crys}}(\mathfrak{X}_p)$. In the same vein, given $i=1, \ldots, m$, let us write $\beta_{(w,p_i)}:=\mathrm{dim}_K H^w_{\mathrm{cyrs}}(\mathfrak{X}_{p_i})$. The next result provides a uniform upper bound for these dimensions.
\begin{proposition}\label{prop:bound11}
There exists an integer $C\gg 0$ such that $\beta_{(w,p)}\leq C$ and $\beta_{(w,p_i)}\leq C$ for every $p\neq p_1, \ldots, p_m$ and $i=1, \ldots, m$.
\end{proposition}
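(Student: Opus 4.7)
The plan is to deduce this commutative statement from its noncommutative counterpart, namely Proposition \ref{prop:bound-11}, applied to the smooth proper $\bbQ$-linear dg category $\cA:=\perf_\dg(X)$. Concretely, we will take $\mathfrak{A}:=\perf_\dg(\mathfrak{X})$ and $\mathfrak{A}_i:=\perf_\dg(\mathfrak{X}_i)$, so that $\mathfrak{A}_p$ and $\mathfrak{A}_{p_i}$ are Morita equivalent to $\perf_\dg(\mathfrak{X}_p)$ and $\perf_\dg(\mathfrak{X}_{p_i})$, respectively. Proposition \ref{prop:bound-11} then furnishes integers $C_0,C_1\gg 0$, depending only on $\cA$, such that $\dim_K TP_0(\perf_\dg(\mathfrak{X}_p))_{1/p}\leq C_0$ and $\dim_K TP_1(\perf_\dg(\mathfrak{X}_p))_{1/p}\leq C_1$, uniformly in $p\neq p_1,\dots,p_m$, and analogously at the primes $p_i$.

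Next, I would invoke Scholze's natural isomorphisms \eqref{eq:Scholze1}--\eqref{eq:Scholze2}, which for every prime of good reduction yield
\[
TP_0(\perf_\dg(\mathfrak{X}_p))_{1/p}\simeq \bigoplus_{w\,\mathrm{even}} H^w_{\mathrm{crys}}(\mathfrak{X}_p),\qquad TP_1(\perf_\dg(\mathfrak{X}_p))_{1/p}\simeq \bigoplus_{w\,\mathrm{odd}} H^w_{\mathrm{crys}}(\mathfrak{X}_p),
\]
and similarly with $\mathfrak{X}_p$ replaced by $\mathfrak{X}_{p_i}$. Since each summand $H^w_{\mathrm{crys}}(\mathfrak{X}_p)$ injects into the corresponding $TP_n(\perf_\dg(\mathfrak{X}_p))_{1/p}$ (with $n\equiv w \pmod 2$), we obtain the inequalities $\beta_{(w,p)}\leq C_0$ for $w$ even and $\beta_{(w,p)}\leq C_1$ for $w$ odd, uniformly in $p\neq p_1,\dots,p_m$; the same bounds hold at the finitely many primes $p_i$.

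Finally, setting $C:=\max\{C_0,C_1\}$ delivers the sought uniform bound, noting that the finitely many values $\beta_{(w,p_i)}$, $i=1,\ldots,m$, are already controlled by the noncommutative estimate at $p_i$. No additional ingredient is required. The only genuinely nontrivial step is the input Proposition \ref{prop:bound-11}, whose bound is independent of the residue characteristic; once that is available, the passage to crystalline cohomology is purely a matter of extracting a summand of $TP_n$ via Scholze's comparison.
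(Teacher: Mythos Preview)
Your proof is correct and follows essentially the same route as the paper: apply Proposition~\ref{prop:bound-11} to $\cA=\perf_\dg(X)$ with $\mathfrak{A}=\perf_\dg(\mathfrak{X})$ and $\mathfrak{A}_i=\perf_\dg(\mathfrak{X}_i)$, then use Scholze's isomorphisms \eqref{eq:Scholze1}--\eqref{eq:Scholze2} to bound each $\beta_{(w,p)}$ by the dimension of the appropriate $TP_n$. The only cosmetic difference is that the paper takes $C:=C_0+C_1$ (bounding $\beta_{(w,p)}$ by the full sum $\dim_K TP_0 + \dim_K TP_1$ without separating parities), whereas you take the slightly sharper $C:=\max\{C_0,C_1\}$ by noting that $H^w_{\mathrm{crys}}$ sits inside a single $TP_n$.
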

\begin{proof}
Consider the smooth proper $\bbQ$-linear dg category $\cA:=\perf_\dg(X)$. In this case, similarly to Example \ref{ex:sp}, we can choose for $\mathfrak{A}$ the dg category $\perf_\dg(\mathfrak{X})$ and for $\mathfrak{A}_i$ the dg category $\perf_\dg(\mathfrak{X}_i)$. Consequently, since the dg categories $\mathfrak{A}_p$ and $\mathfrak{A}_{p_i}$ are Morita equivalent to $\perf_\dg(\mathfrak{X}_p)$ and $\perf_\dg(\mathfrak{X}_{p_i})$, respectively, we conclude from Proposition \ref{prop:bound-11} that there exist integers $C_0 \gg 0$ and $C_1 \gg 0$ such that
\begin{eqnarray*}
\mathrm{dim}_K TP_0(\perf_\dg(\mathfrak{X}_p))\leq C_0 && \mathrm{dim}_K TP_1(\perf_\dg(\mathfrak{X}_p))\leq C_1\,; 
\end{eqnarray*} 
similarly with $p$ replaced by $p_i$. Thanks to \eqref{eq:Scholze1}-\eqref{eq:Scholze2}, this implies that
$$ \beta_{(w,p)}\leq \mathrm{dim}_K TP_0(\perf_\dg(\mathfrak{X}_p)) + \mathrm{dim}_K TP_1(\perf_\dg(\mathfrak{X}_p)) \leq C_0 + C_1\,;$$
similarly with $p$ replaced by $p_i$. By setting $C:=C_0+C_1$, we hence conclude that $\beta_{(w,p)}\leq C$ and $\beta_{(w,p_i)}\leq C$ for every $p\neq p_1, \ldots, p_m$ and $i=1, \ldots, m$.
\end{proof}
Given a prime $p\neq p_1, \ldots, p_m$ and an integer $n\geq 1$, consider the following integer $\#_{(w,p,n)}:=\mathrm{trace}((\mathrm{Fr}^w)^{\circ n}|H^w_{\mathrm{crys}}(\mathfrak{X}_p))$. In the same vein, given $i=1, \ldots, m$, let us consider the integer $\#_{(w,p_i,n)}:=\mathrm{trace}((\mathrm{Fr}^w)^{\circ n}|H^w_{\mathrm{crys}}(\mathfrak{X}_{p_i}))$.
\begin{proposition}\label{prop:bound-223}
There exists an integer $C\gg 0$ such that $|\#_{(w,p,n)}|\leq Cp^{\frac{w}{2}n}$, resp. $|\#_{(w,p_i,n)}|\leq Cp_i^{\frac{w}{2}n}$, for every $p\neq p_1, \ldots p_m$ and $n\geq 1$, resp. for every $i=1, \ldots, m$ and $n\geq 1$.
\end{proposition}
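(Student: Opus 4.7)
The plan is to combine the classical Weil conjecture (proved by Deligne) with the uniform bound on Betti numbers established in Proposition \ref{prop:bound11}. Concretely, given a prime $p \neq p_1, \ldots, p_m$, let $\{\lambda_{(w,p,1)}, \ldots, \lambda_{(w,p,\beta_{(w,p)})}\}$ denote the eigenvalues (with multiplicity) of the automorphism $\mathrm{Fr}^w$ of $H^w_{\mathrm{crys}}(\mathfrak{X}_p)$, and similarly $\{\lambda_{(w,p_i,1)}, \ldots, \lambda_{(w,p_i,\beta_{(w,p_i)})}\}$ for the eigenvalues of $\mathrm{Fr}^w$ on $H^w_{\mathrm{crys}}(\mathfrak{X}_{p_i})$. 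Since $\mathfrak{X}_p$ (resp. $\mathfrak{X}_{p_i}$) is a smooth proper scheme over $\bbF_p$ (resp. $\bbF_{p_i}$), the Weil conjecture (proved by Deligne \cite{Deligne}, and transferred to crystalline cohomology by Katz-Messing \cite{KM}) guarantees that every complex conjugate of these eigenvalues has absolute value $p^{w/2}$ (resp. $p_i^{w/2}$).

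First I would expand the trace as a sum of $n$-th powers of eigenvalues and apply the triangle inequality:
\begin{eqnarray*}
|\#_{(w,p,n)}| & = & |\lambda_{(w,p,1)}^n + \cdots + \lambda_{(w,p,\beta_{(w,p)})}^n| \\
& \leq & |\lambda_{(w,p,1)}|^n + \cdots + |\lambda_{(w,p,\beta_{(w,p)})}|^n \\
& = & \beta_{(w,p)} \cdot p^{\frac{w}{2}n}\,,
\end{eqnarray*}
and analogously $|\#_{(w,p_i,n)}| \leq \beta_{(w,p_i)} \cdot p_i^{\frac{w}{2}n}$. Then I would invoke Proposition \ref{prop:bound11}, which supplies an integer $C \gg 0$ such that $\beta_{(w,p)} \leq C$ and $\beta_{(w,p_i)} \leq C$ uniformly in $p$ and $i$. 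Substituting yields the desired bounds $|\#_{(w,p,n)}| \leq C p^{\frac{w}{2}n}$ and $|\#_{(w,p_i,n)}| \leq C p_i^{\frac{w}{2}n}$.

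There is no real obstacle here: the entire content of the statement is the combination of the (known) Weil bound on the archimedean size of the Frobenius eigenvalues with the (already established) uniform bound on the crystalline Betti numbers. The only mildly delicate point is ensuring that the trace, which is \emph{a priori} defined via the $K$-linear action of $\mathrm{Fr}^w$ on the $K$-vector space $H^w_{\mathrm{crys}}(\mathfrak{X}_p)$, is invariant under any choice of embedding into $\bbC$, so that the Weil estimate $|\lambda|=p^{w/2}$ legitimately controls the complex absolute value of each summand; this is standard since the trace is a rational (in fact integer) number independent of the embedding, and the bound on $|\lambda_{(w,p,j)}|^n$ holds for every complex conjugate of $\lambda_{(w,p,j)}$.
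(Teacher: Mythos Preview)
Your proposal is correct and follows essentially the same approach as the paper: expand the trace as a sum of $n$-th powers of Frobenius eigenvalues, bound each summand by $p^{\frac{w}{2}n}$ via the Weil conjecture $\mathrm{W}(\mathfrak{X}_p)$ (Deligne, transferred to crystalline cohomology by Katz--Messing), and then invoke Proposition~\ref{prop:bound11} for the uniform Betti-number bound $\beta_{(w,p)}\leq C$. Your closing remark on the embedding-independence of the trace is a nice clarification but not strictly needed, since the paper treats the $\#_{(w,p,n)}$ as integers from the outset.
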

\begin{proof}
Given a prime $p\neq p_1, \ldots, p_m$, let us write $\{\lambda_{(w,p,1)}, \ldots, \lambda_{(w,p,\beta_{(w,p)})}\}$ for the eigenvalues (with multiplicity) of the automorphism $\mathrm{Fr}^w$ of $H^w_{\mathrm{crys}}(\mathfrak{X}_p)$. In the same vein, given $i=1, \ldots m$, let us write $\{\lambda_{(w,p_i,1)}, \ldots, \lambda_{(w,p_i,\beta_{(w,p_i)})}\}$ for the eigenvalues (with multiplicity) of the automorphism $\mathrm{Fr}^w$ of $H^w_{\mathrm{crys}}(\mathfrak{X}_{p_i})$. Under these notations, we have the following (in)equalities
\begin{eqnarray}
|\#_{(w,p,n)}| & = & |\mathrm{trace}(\mathrm{Fr}^w)| \nonumber\\
& = & |\lambda^n_{(w,p,1)} + \cdots + \lambda^n_{(w,p,\beta_{(w,p)})}| \nonumber \\
& \leq & |\lambda_{(w,p,1)}|^n + \cdots + |\lambda_{(w,p,\beta_{(w,p)})}|^n \nonumber \\ 
& \leq & \beta_{(w,p)}p^{\frac{w}{2}n} \label{eq:star-0} \\
& \leq & Cp^{\frac{w}{2}n} \label{eq:star-00}\,,
\end{eqnarray}
where \eqref{eq:star-0} follows from conjecture $\mathrm{W}(\mathfrak{X}_p)$ (proved in \cite{Deligne}) and \eqref{eq:star-00} from Proposition \ref{prop:bound11}; similarly with $p$ replaced by $p_i$.
\end{proof}
Given a prime $p\neq p_1, \ldots, p_m$, consider the following (formal) formal power series $\phi_{(w,p)}(t):=\sum_{n\geq 1} \#_{(w,p,n)}\frac{t^n}{n}$ and its exponentiation $\varphi_{(w,p)}(t):=\sum_{n\geq 0} a_{(w,p,n)}t^n$. In the same vein, given $i=1,\ldots, m$, consider the (auxiliar) formal power series $\phi_{(w,p_i)}(t):=\sum_{n\geq 1} \#_{(w,p_i,n)}\frac{t^n}{n}$ and its exponentiation $\varphi_{(w,p_i)}(t):=\sum_{n\geq 0} a_{(w,p_i,n)}t^n$. Note that thanks to the above Lemma \ref{lem:exercise1}, we have the (formal) equalities:
\begin{eqnarray}\label{eq:Euler-Serre}
\varphi_{(w,p)}(p^{-s}) = \zeta_w(\mathfrak{X}_p;s) && \varphi_{(w,p_i)}(p_i^{-s}) = \zeta_w(\mathfrak{X}_{p_i};s)\,.
\end{eqnarray}
\begin{definition}
Let $\varphi_w(s):=\sum_{n\geq 1} \frac{b_{(w,n)}}{n^s}$ be the Dirichlet series, where $b_{(w,n)}:=a_{(w, p_{r_1}, v_{r_1})}\cdots a_{(w, p_{r_n}, v_{r_n})}$ is the product associated to the (unique) prime decomposition $p_{r_1}^{v_{r_1}}\cdots p_{r_n}^{v_{r_n}}$ of the integer $n\geq 1$.
\end{definition}
Note that we have the following (formal) equalities
\begin{eqnarray*}
\varphi_w(s) & \stackrel{\mathrm{(a)}}{=} & \prod_{p\neq p_1, \ldots, p_m}(\sum_{n\geq0} \frac{b_{(w,p^n)}}{p^{ns}})\cdot \prod_{1\leq i\leq m}(\sum_{n\geq0} \frac{b_{(w,p_i^n)}}{p_i^{ns}}) \label{eq:last-111} \\
& = & \prod_{p\neq p_1, \ldots, p_m} \varphi_{(w,p)}(p^{-s}) \cdot \prod_{1\leq i\leq m} \varphi_{(w,p_i)}(p_i^{-s}) \\
& = & L_w(X;s) \nonumber\,,
\end{eqnarray*}
where (a) follows from the Euler product decomposition of the (multiplicative) Dirichlet series $\varphi_w(s)$. These (formal) equalities imply that the proof of Theorem \ref{thm:Serre-1} follows now from the next result:
\begin{proposition}
The Dirichlet series $\varphi_w(s)$ converges (absolutely) in the half-plane $\mathrm{Re}(s)> \frac{w}{2} +1$. Moreover, $\varphi_w(s)$ is non-zero in this half-plane region. 
\end{proposition}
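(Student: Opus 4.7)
The plan is to mimic exactly the template used in the proof of Proposition \ref{prop:Dirichlet}, with the noncommutative bounds replaced by the commutative counterparts established in Proposition \ref{prop:bound-223}. Fix a real number $z > \tfrac{w}{2} + 1$. By the classical theory of multiplicative Dirichlet series (see \cite[Chap.~VI \S2]{Serre-book}), it is enough to verify that $\varphi_w(z)$ converges absolutely; then absolute convergence, and consequently non-vanishing, will propagate throughout the half-plane $\mathrm{Re}(s) > \tfrac{w}{2} + 1$.

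Next, I would invoke the (formal) Euler product decomposition $\varphi_w(s) = \prod_{p\neq p_1,\ldots,p_m}\varphi_{(w,p)}(p^{-s}) \cdot \prod_{1\leq i\leq m}\varphi_{(w,p_i)}(p_i^{-s})$ established just before the statement. Passing to logarithms reduces absolute convergence to showing that the sums $\sum_{p\neq p_1,\ldots,p_m}\phi_{(w,p)}(p^{-z})$ and $\sum_{i=1}^m \phi_{(w,p_i)}(p_i^{-z})$ converge absolutely. Applying the uniform bound $|\#_{(w,p,n)}|\leq C p^{\frac{w}{2}n}$ from Proposition \ref{prop:bound-223}, I would estimate
\begin{eqnarray*}
\sum_{p\neq p_1,\ldots,p_m}|\phi_{(w,p)}(p^{-z})| & \leq & \sum_{p\neq p_1,\ldots,p_m}\sum_{n\geq 1}\frac{C p^{\frac{w}{2}n}}{n\, p^{nz}} \\
& \leq & C \sum_{p\neq p_1,\ldots,p_m}\sum_{n\geq 1}\frac{1}{p^{n(z-\frac{w}{2})}} \\
& \leq & C \sum_{p\neq p_1,\ldots,p_m}\frac{1}{p^{z-\frac{w}{2}}-1}\,,
\end{eqnarray*}
which is convergent precisely because $z - \tfrac{w}{2} > 1$. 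The analogous estimate for the finite sum indexed by $i=1,\ldots,m$ is immediate (no convergence issue arises since the sum is finite).

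For the non-vanishing assertion, I would argue factor by factor: in the half-plane $\mathrm{Re}(s) > \tfrac{w}{2}+1$ each Euler factor $\varphi_{(w,p)}(p^{-s}) = \zeta_w(\mathfrak{X}_p;s)$ (and likewise for $p_i$) is non-zero, being of the form $\det(\mathrm{id} - p^{-s}\mathrm{Fr}^w\,|\, H^w_{\mathrm{crys}}(\mathfrak{X}_p))^{-1}$; therefore, since the infinite product converges absolutely in this region, $\varphi_w(s)$ is itself non-zero there. I do not anticipate a serious obstacle: every ingredient (the bound on traces via $\mathrm{W}(\mathfrak{X}_p)$, the uniform bound on crystalline Betti numbers, and the formal Euler product) is already in place. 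The only point requiring a modicum of care is the reduction from complex $s$ with $\mathrm{Re}(s)>\tfrac{w}{2}+1$ to real $z>\tfrac{w}{2}+1$, but this is standard.
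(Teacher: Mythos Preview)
Your proposal is correct and follows essentially the same approach as the paper: reduce to a real $z>\tfrac{w}{2}+1$, pass to logarithms via the Euler product, bound $|\phi_{(w,p)}(p^{-z})|$ using Proposition~\ref{prop:bound-223}, sum the resulting geometric series, and conclude non-vanishing from the non-vanishing of each Euler factor. The paper's proof is slightly more terse (it simply writes ``Similarly to the proof of Proposition~\ref{prop:Dirichlet}'') but the substance is identical.
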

\begin{proof}
Let $z> \frac{w}{2}+1$ be a real number. Similarly to the proof of Proposition \ref{prop:Dirichlet}, it is enough to show that the sums $\sum_{p\neq p_1, \ldots, p_m}\phi_{(w,p)}(p^{-z})$ and $\sum_{i=1}^m \phi_{(w,p_i)}(p_i^{-z})$ converge (absolutely). Note that we have the following (in)equalities
\begin{eqnarray}
\sum_{p \neq p_1, \ldots, p_m} |\phi_{(w,p)}(p^{-z})|& = & \sum_{p \neq p_1, \ldots, p_m} \sum_{n \geq 1} \frac{|\#_{(w,p,n)}|}{n p^{nz}} \nonumber\\
& \leq & \sum_{p \neq p_1, \ldots, p_m} \sum_{n \geq 1} \frac{Cp^{\frac{w}{w}n}}{n p^{nz}}  \label{eq:star-222-3}\\
& \leq & C \sum_{p \neq p_1, \ldots, p_m} \sum_{n \geq 1} \frac{1}{p^{n(z-\frac{w}{2})}} \\
& \leq & C \sum_{p \neq p_1, \ldots, p_m} \frac{1}{p^{(z-\frac{w}{2})}-1} \nonumber \,,
\end{eqnarray}
where \eqref{eq:star-222-3} follows from Proposition \ref{prop:bound-223}. Since the series $\sum_{p \neq p_1, \ldots, p_m} \frac{1}{p^{(z-\frac{w}{2})}-1}$ is convergent (with $z>\frac{w}{2}+1$), we hence conclude that $\sum_{p \neq p_1, \ldots, p_m} \phi_{(w,p)}(p^{-z})$ converges (absolutely). Similarly, we have the following (in)equalities
\begin{eqnarray}
\sum_{1\leq i\leq m} |\phi_{(w,p_i)}(p_i^{-z})|& = & \sum_{1\leq i\leq m} \sum_{n \geq 1} \frac{|\#_{(w,p_i,n)}|}{n p_i^{nz}} \nonumber\\
& \leq & \sum_{1\leq i\leq m} \sum_{n \geq 1} \frac{Cp_i^{\frac{w}{2}n}}{n p_i^{nz}}  \label{eq:star-2222}\\
& \leq & C\sum_{1\leq i\leq m} \sum_{n \geq 1} \frac{1}{p_i^{n(z-\frac{w}{2})}} \\
& \leq & C \sum_{1\leq i\leq m} \frac{1}{p_i^{(z-\frac{w}{2})}-1} \nonumber \,,
\end{eqnarray}
where \eqref{eq:star-2222} follows from Proposition \ref{prop:bound-223}. This implies that $\sum_{i=1}^m \phi_{(w,p_i)}(p_i^{-z})$ converges (absolutely). Finally, note that $\varphi_w(s)$ is non-zero in the half-plane $\mathrm{Re}(s) > \frac{w}{2}+1$ because its Euler factors \eqref{eq:Euler-Serre} are non-zero in this region.
\end{proof}

\section{Proof of Theorem \ref{thm:implications}}
Recall first that, thanks to the Hochschild-Kostant-Rosenberg theorem (consult \cite{FT}), we have the following natural isomorphisms of finite-dimensional $\bbQ$-vector spaces $HP_0(\perf_\dg(X))\simeq \bigoplus_{w\,\mathrm{even}}H_{dR}^w(X)$ and $HP_1(\perf_\dg(X))\simeq  \bigoplus_{w\,\mathrm{odd}}H_{dR}^w(X)$, where $H_{dR}^\ast(X)$ stands for the de Rham cohomology of $X$. Recall also that we have the following classical isomorphism 
\begin{eqnarray*}
K_0(\perf_\dg(X))_\bbQ \stackrel{\simeq}{\too} \bigoplus_{0\leq i\leq d} \cZ^i(X)_\bbQ/_{\!\sim\mathrm{rat}} && [\cF] \mapsto \mathrm{chern}(\cF) \cdot \sqrt{\mathrm{td}}_X\,,
\end{eqnarray*}
where $d:=\mathrm{dim}(X)$, $\mathrm{chern}(\cF)$ stands for the Chern character of $\cF$, and $\sqrt{\mathrm{td}}_X$ stands for the square root of the Todd class of $X$; see \cite[\S18.3]{Fulton}. Under the above isomorphisms, the $\bbQ$-linear homomorphism $\mathrm{ch}\colon K_0(\perf_\dg(X))_\bbQ \to HP_0(\perf_\dg(X))$ defined in \S\ref{sub:homological} corresponds to the classical cycle class map (with values in de Rham cohomology). Consequently, we obtain an induced isomorphism
\begin{equation}\label{eq:homo1}
K_0(\perf_\dg(X))_\bbQ/_{\!\sim\mathrm{hom}} \simeq \bigoplus_{0\leq i\leq d} \cZ^i(X)_\bbQ/_{\!\sim \mathrm{hom}} \end{equation}
as well as an induced isomorphism:
\begin{equation}\label{eq:homo2}
K_0(\perf_\dg(X))^0_\bbQ \simeq \bigoplus_{0\leq i\leq d} \cZ^i(X)^0_\bbQ/_{\!\sim\mathrm{rat}} = \bigoplus_{1\leq i\leq d} \cZ^i(X)^0_\bbQ/_{\!\sim\mathrm{rat}}\,.
\end{equation}

The first implication of Theorem \ref{thm:implications} is a consequence of the equalities
\begin{eqnarray}
\mathrm{ord}_{s=1}L_{\mathrm{even}}(\perf_\dg(X);s) & = & \sum_{w\,\mathrm{even}} \mathrm{ord}_{s=1}L_w(X;s+ \frac{w}{2})\label{eq:star1} \\
& = & \sum_{w\,\mathrm{even}}\mathrm{ord}_{s=\frac{w}{2}+1} L_w(X;s) \nonumber \\
& = & - \sum_{w\, \mathrm{even}} \mathrm{dim}_\bbQ \cZ^{\frac{w}{2}}(X)_\bbQ/_{\!\sim\mathrm{hom}} \label{eq:star2} \\
& = & - \mathrm{dim}_\bbQ K_0(\perf_\dg(X))_\bbQ/_{\!\sim \mathrm{hom}}\,, \label{eq:star3}
\end{eqnarray}
where \eqref{eq:star1} follows from factorization \eqref{eq:equality-L1} (and from Remark \ref{rk:key-1}), \eqref{eq:star2} from Beilinson's conjecture $\mathrm{B}_w^{\frac{w}{2}+1}(X)$, and \eqref{eq:star3} from isomorphism \eqref{eq:homo1}. Similarly, the second implication of Theorem \ref{thm:implications} is a consequence of the equalities  
\begin{eqnarray}
\mathrm{ord}_{s=1}L_{\mathrm{odd}}(\perf_\dg(X);s) & = & \sum_{w\,\mathrm{odd}} \mathrm{ord}_{s=1}L_w(X;s+ \frac{w-1}{2})\label{eq:star11} \\
& = & \sum_{w\,\mathrm{odd}}\mathrm{ord}_{s=\frac{w+1}{2}} L_w(X;s) \nonumber \\
& = & \sum_{w\, \mathrm{odd}} \mathrm{dim}_\bbQ \cZ^{\frac{w+1}{2}}(X)^0_\bbQ/_{\!\sim\mathrm{rat}} \label{eq:star22} \\
& = & \mathrm{dim}_\bbQ K_0(\perf_\dg(X))^0_\bbQ\,, \label{eq:star33}
\end{eqnarray}
where \eqref{eq:star11} follows from factorization \eqref{eq:equality-L2} (and from Remark \ref{rk:key-1}), \eqref{eq:star22} from Beilinson's conjecture $\mathrm{B}_w^{\frac{w+1}{2}}(X)$, and \eqref{eq:star33} from isomorphism \eqref{eq:homo2}. 

Assume now that the Beilinson-Soul\'e vanishing conjecture holds. Under this assumption, the third implication of Theorem \ref{thm:implications} is a consequence of the equalities
\begin{eqnarray}
\mathrm{ord}_{s=0}L_{\mathrm{even}}(\perf_\dg(X);s) & = & \sum_{w\,\mathrm{even}} \mathrm{ord}_{s=0}L_w(X;s+ \frac{w}{2})\label{eq:star111} \\
& = & \sum_{w\,\mathrm{even}}\mathrm{ord}_{s=\frac{w}{2}} L_w(X;s) \nonumber \\
& = & \sum_{w\, \mathrm{even}} \mathrm{dim}_\bbQ H^{w+1}_{\mathrm{mot}}(X;\bbQ(\frac{w}{2}+1)) \label{eq:star222} \\
& = & \mathrm{dim}_\bbQ K_1(\perf_\dg(X))_\bbQ \label{eq:star333}\,,
\end{eqnarray}
where \eqref{eq:star111} follows from factorization \eqref{eq:equality-L1} (and from Remark \ref{rk:key-1}), \eqref{eq:star222} from Beilinson's conjecture $\mathrm{B}^{\frac{w}{2}}_w(X)$, and \eqref{eq:star333} from Proposition \ref{prop:final} below. Similarly, the fourth implication of Theorem \ref{thm:implications} is a consequence of the equalities
\begin{eqnarray}
\mathrm{ord}_{s=-1}L_{\mathrm{even}}(\perf_\dg(X);s) & = & \sum_{w\,\mathrm{even}} \mathrm{ord}_{s=-1}L_w(X;s+ \frac{w}{2})\label{eq:star1111} \\
& = & \sum_{w\,\mathrm{even}}\mathrm{ord}_{s=\frac{w}{2}-1} L_w(X;s) \nonumber \\
& = & \sum_{w\, \mathrm{even}} \mathrm{dim}_\bbQ H^{w+1}_{\mathrm{mot}}(X;\bbQ(\frac{w}{2}+2)) \label{eq:star2222} \\
& = & \mathrm{dim}_\bbQ K_3(\perf_\dg(X))_\bbQ \label{eq:star3333}\,,
\end{eqnarray}
where \eqref{eq:star1111} follows from factorization \eqref{eq:equality-L1} (and from Remark \ref{rk:key-1}), \eqref{eq:star2222} from Beilinson's conjecture $\mathrm{B}_w^{\frac{w}{2}-1}(X)$, and \eqref{eq:star3333} from Proposition \ref{prop:final}. 

Finally, the last implication of Theorem \ref{thm:implications} is a consequence of the equalities
\begin{eqnarray}
\mathrm{ord}_{s=0}L_{\mathrm{odd}}(\perf_\dg(X);s) & = & \sum_{w\,\mathrm{odd}} \mathrm{ord}_{s=0}L_w(X;s+ \frac{w-1}{2})\label{eq:star11111} \\
& = & \sum_{w\,\mathrm{odd}}\mathrm{ord}_{s=\frac{w-1}{2}} L_w(X;s) \nonumber \\
& = & \sum_{w\, \mathrm{odd}} \mathrm{dim}_\bbQ H^{w+1}_{\mathrm{mot}}(X;\bbQ(\frac{w+3}{2})) \label{eq:star22222} \\
& = & \mathrm{dim}_\bbQ K_2(\perf_\dg(X))_\bbQ \label{eq:star33333}\,,
\end{eqnarray}
where \eqref{eq:star11111} follows from factorization \eqref{eq:equality-L2} (and from Remark \ref{rk:key-1}), \eqref{eq:star22222} from Beilinson's conjecture $\mathrm{B}_w^{\frac{w-1}{2}}(X)$, and \eqref{eq:star33333} from Proposition \ref{prop:final}.
\begin{proposition}\label{prop:final}
Let $X$ be a smooth proper $\bbQ$-scheme of dimension $d$ and $n>0$ an integer. Assuming the Beilinson-Soul\'e vanishing conjecture, we have the following direct sum decomposition(s) of rational algebraic $K$-theory:
$$ K_n(\perf_\dg(X))_\bbQ \simeq \bigoplus_{\frac{n}{2}<r \leq d+n} H_{\mathrm{mot}}^{2r-n}(X;\bbQ(r)) = \bigoplus_{\frac{n}{2}<r \leq d+n-1} H_{\mathrm{mot}}^{2r-n}(X;\bbQ(r))\,.$$
\end{proposition}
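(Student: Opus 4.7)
The plan is to reduce the statement to a classical vanishing analysis for motivic cohomology. Since $X$ is a smooth $\bbQ$-scheme, the $K$-theory of $\perf_\dg(X)$ agrees with Quillen's $K$-theory $K_n(X)$. Rationalizing, the Adams operations act diagonalizably on $K_n(X)_\bbQ$, and the eigenspace decomposition (equivalently, the rational degeneration of the motivic Atiyah--Hirzebruch / Beilinson spectral sequence, due to Bloch, Levine, and Friedlander--Suslin) furnishes a canonical identification
\begin{equation*}
K_n(\perf_\dg(X))_\bbQ \simeq \bigoplus_{r \in \bbZ} H^{2r-n}_{\mathrm{mot}}(X;\bbQ(r))\,.
\end{equation*}
The proof thus reduces to cutting down the index set $r$ to those values where the summand can be nonzero.

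For the lower bound, note first that if $r < n/2$ then $2r - n < 0$ and the summand vanishes unconditionally; if $r = n/2$, then $n$ is even and $r \geq 1$ (since $n>0$), so the summand $H^0_{\mathrm{mot}}(X;\bbQ(r))$ vanishes precisely by the Beilinson--Soul\'e vanishing conjecture. Together, these give the strict inequality $r > n/2$. For the upper bound, I would invoke Bloch's identification $H^i_{\mathrm{mot}}(X;\bbQ(r)) \simeq CH^r(X,2r-i)_\bbQ$ and the elementary observation that $X \times \Delta^{2r-i}$ has dimension $d + (2r-i)$, so that no codimension-$r$ cycles exist once $r > d + (2r-i)$, i.e., $i > r + d$. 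Setting $i = 2r-n$ yields the bound $r \leq d + n$, giving the first equality in the statement.

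To establish the second equality, it remains to show that the $r = d+n$ summand $H^{d+n}_{\mathrm{mot}}(X;\bbQ(d+n))$ is already zero. For this I would appeal to motivic Poincar\'e duality for the smooth proper scheme $X$, which provides the canonical isomorphism
\begin{equation*}
H^{d+n}_{\mathrm{mot}}(X;\bbQ(d+n))^\vee \simeq H^{d-n}_{\mathrm{mot}}(X;\bbQ(-n))\,.
\end{equation*}
Since motivic cohomology with strictly negative twist vanishes identically (the complex $\bbQ(-n)$ is trivial for $n > 0$), the right-hand side is zero, whence so is the left-hand side.

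The only substantive point to pin down is the very first step: verifying that $K_n(\perf_\dg(X))$ coincides with Quillen's $K_n(X)$ (standard for smooth schemes) and that the Adams eigenspace decomposition with graded pieces given by motivic cohomology is available over $\bbQ$. Both are by now classical and can be quoted rather than re-derived; the remainder of the argument is a routine bookkeeping of dimensional and weight-vanishing bounds, with the Beilinson--Soul\'e hypothesis entering precisely once, at the boundary case $2r = n$.
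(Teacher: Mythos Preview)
Your overall strategy—reducing to Quillen $K$-theory, invoking the Adams/weight decomposition into motivic cohomology, and then trimming the range of $r$ by vanishing arguments—matches the paper's. Two points need correction.

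First, a minor one: the vanishing $H^i_{\mathrm{mot}}(X;\bbQ(r))=0$ for $i<0$ and $r\geq 0$ is \emph{not} unconditional; it is precisely the $i<0$ clause of the Beilinson--Soul\'e conjecture (the paper states the conjecture in exactly this form). Since the proposition assumes the conjecture anyway this does no harm, but your labelling is off.

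Second, and this is a genuine gap: your treatment of the top summand $r=d+n$ fails. The group in question is $H^{2d+n}_{\mathrm{mot}}(X;\bbQ(d+n))$, not $H^{d+n}_{\mathrm{mot}}(X;\bbQ(d+n))$ as you wrote. More seriously, the ``motivic Poincar\'e duality'' you invoke—a perfect pairing of $\bbQ$-vector spaces identifying the dual of this group with one in negative Tate twist—does not exist. Motivic cohomology groups are typically infinite-dimensional, and while there is a cup-product pairing to $H^{2d}_{\mathrm{mot}}(X;\bbQ(d))\simeq\bbQ$, it is not perfect, so the vanishing of one factor says nothing about the other. The paper's argument here is genuinely different and uses arithmetic input specific to the base $\bbQ$: via the Gersten/coniveau filtration it identifies $H^{2d+n}_{\mathrm{mot}}(X;\bbQ(d+n))$ with the cokernel of a residue map landing in $\bigoplus_{x\in X^{(d)}}K^M_n(\kappa(x))_\bbQ$, and since $X$ is proper over $\bbQ$ the residue fields $\kappa(x)$ at closed points are number fields, whose Milnor $K$-groups are torsion. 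Your duality argument never uses that the base is $\bbQ$; that alone should be a warning sign.
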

\begin{proof}
Recall first that the algebraic $K$-theory groups $K_n(\perf_\dg(X))_\bbQ$ of the dg category $\perf_\dg(X)$ are naturally isomorphic to the classical algebraic $K$-theory group $K_n(X)_\bbQ$ of $X$; consult \cite[\S2.2.1]{book}. As proved in \cite[\S2.8]{Soule}, we have the direct sum decomposition $K_n(X)_\bbQ \simeq \bigoplus_{r=0}^{d+n} K_n(X)_\bbQ^{(r)}$, where $K_n(X)_\bbQ^{(r)}$ stands for the $r^{\mathrm{th}}$ eigenspace with respect to the Adams operations on $K_n(X)_\bbQ$. Making use of the classical identification $H^i_{\mathrm{mot}}(X;\bbQ(j))\simeq K_{2j-i}(X)_\bbQ^{(j)}$, we hence conclude that
\begin{equation}\label{eq:iso}
K_n(X)_\bbQ \simeq \bigoplus_{0\leq r\leq d+n} H^{2r-n}_{\mathrm{mot}}(X;\bbQ(r))\,.
\end{equation}
It is well-known that $H_{\mathrm{mot}}^{2d+n}(X;\bbQ(d+n))$ is isomorphic to the cokernel of the residue map $\partial\colon \bigoplus_{x \in X^{(d-1)}}K^M_{n+1}(\kappa(x))_\bbQ \to \bigoplus_{x \in X^{(d)}}K^M_n(\kappa(x))_\bbQ$, where $X^{(d)}$ stands for ~the set of points of codimension $d$ on $X$ and $K^M_n(\kappa(x))$ for the $n^{\mathrm{th}}$ Milnor $K$-theory group of the residue field $\kappa(x)$. Since $\kappa(x)$ is a number field, the Milnor $K$-theory groups $K^M_n(\kappa(x))$ are torsion. This implies that $H^{2d+n}_{\mathrm{mot}}(X;\bbQ(d+n))$ is zero.

Assuming the Beilinson-Soul\'e vanishing conjecture, all the motivic cohomology groups $H^{2r-n}_{\mathrm{mot}}(X;\bbQ(r))$, with $0 \leq r \leq \lfloor\frac{n}{2}\rfloor$, are zero. Consequently, since $H^{2d+n}_{\mathrm{mot}}(X;\bbQ(d+n))$ is also zero, we obtain the sought decomposition(s).
\end{proof}
\begin{remark}[Potential generalization]\label{rk:generalization2}
Let $X$ be a smooth proper $\bbQ$-scheme of dimension $d$. Assuming the Beilinson conjecture and the Beilinson-Soul\'e vanishing conjecture, note that the above proof of Theorem \ref{thm:implications} (and of Proposition \ref{prop:final}) shows that the following equalities hold
\begin{eqnarray*}
\mathrm{ord}_{s=j}L_{\mathrm{even}}(\perf_\dg(X);s)= \mathrm{dim}_\bbQ K_{1-2j}(\perf_\dg(X))_\bbQ && j \leq -2 \\
\mathrm{ord}_{s=j}L_{\mathrm{odd}}(\perf_\dg(X);s)= \mathrm{dim}_\bbQ K_{2-2j}(\perf_\dg(X))_\bbQ && j \leq -1
\end{eqnarray*}
if and only if the groups $\{H_{\mathrm{mot}}^{2r+2j-1}(X;\bbQ(r)\,|\,d-j+1< r \leq d-2j\}$, resp. $\{H_{\mathrm{mot}}^{2r+2j-2}(X;\bbQ(r))\,|\,d-j+1<r \leq d-2j+1\}$, are zero. Unfortunately, to the best of the author's knowledge, nothing is known about these groups.
\end{remark}

\medbreak\noindent\textbf{Acknowledgments:} I am very grateful to Maxim Kontsevich for an enlightening discussion about the noncommutative Weil conjecture during the {\em Mathematische Arbeitstagung} 2017 (in honor of Yuri I. Manin), and for numerous comments, suggestions and corrections on the previous version of this article. I am also thankful to Niranjan Ramachandran for discussions concerning zeta and $L$-functions, to Dmitry Kaledin for discussions concerning functional equations, and to Pieter Belmans for comments on the original previous version of this article.

\end{document}

\end{proof}